\newcommand*\bigcdot{\mathpalette\bigcdot@{0.7}}
\newcommand*\bigcdot@[2]{\mathbin{\vcenter{\hbox{\scalebox{#2}{$\m@th#1\bullet$}}}}}
\def\sphere{S^{n-1}}
\def\R{\mathbb{R}}
\numberwithin{equation}{section}
\newtheorem{theorem}{Theorem}[section]
\newtheorem{lemma}[theorem]{Lemma}
\newtheorem{remark}[theorem]{Remark}
\newtheorem{proposition}[theorem]{Proposition}
\newtheorem{corollary}[theorem]{Corollary}
\newtheorem{definition}[theorem]{Definition}
\def\bt{\begin{theorem}}
\def\et{\end{theorem}}
\def\be{\begin{equation}}
\def\ee{\end{equation}}
\def\bl{\begin{lemma}}
\def\el{\end{lemma}}
\def\br{\begin{remark}}
\def\er{\end{remark}}
\def\bc{\begin{corollary}}
\def\ec{\end{corollary}}
\def\bd{\begin{definition}}
\def\ed{\end{definition}}
\def\bp{\begin{proposition}}
\def\ep{\end{proposition}}
\def\ball{B^n_2}
\begin{document}
\title{The $m$th order Orlicz projection bodies
\footnote{Keywords: Affine isoperimetric inequality,  $m$th order Orlicz projection bodies, higher-order Orlicz-Petty projection inequality, Fiber symmetrization, Orlicz-Brunn-Minkowski theory, projection body, Petty projection inequality, Steiner symmetrization.}}
\author{Xia Zhou, Deping Ye and Zengle Zhang}
\date{}
\maketitle
\begin{abstract} 
Let $M_{n, m}(\mathbb{R})$ be the space of $n\times m$ real matrices. Define $\mathcal{K}_o^{n,m}$ as the set of convex compact subsets in $M_{n,m}(\mathbb{R})$ with nonempty interior containing the origin $o\in M_{n, m}(\mathbb{R})$, and $\mathcal{K}_{(o)}^{n,m}$ as the members of $\mathcal{K}_o^{n,m}$ containing $o$ in their interiors. Let $\Phi: M_{1, m}(\mathbb{R}) \rightarrow [0, \infty)$ be a convex function such that $\Phi(o)=0$ and $\Phi(z)+\Phi(-z)>0$ for $z\neq o.$ In this paper, we propose the $m$th order Orlicz projection operator $\Pi_{\Phi}^m: \mathcal{K}_{(o)}^{n,1}\rightarrow  \mathcal{K}_{(o)}^{n,m}$, and study its fundamental properties, including the continuity and affine invariance. We establish the related higher-order Orlicz-Petty projection inequality, which states that the volume of $\Pi_{\Phi}^{m, *}(K)$, the polar body of $\Pi_{\Phi}^{m}(K)$, is maximized at origin-symmetric ellipsoids among convex bodies with fixed volume. Furthermore, when $\Phi$ is strictly convex, we prove that the maximum is uniquely attained at origin-symmetric ellipsoids.  Our proof is based on the classical Steiner symmetrization and the Fiber symmetrization.  

We also investigate the special case for $\Phi_{Q}=\phi\circ h_Q$, where $h_Q$ denotes the support function of $Q\in \mathcal{K}^{1, m}_o$ and $\phi: [0, \infty)\rightarrow [0, \infty)$ is a convex function such that $\phi(0)=0$ and $\phi$ is strictly increasing on $[0, \infty).$ When $\phi(t)=t^p$ for $p\geq 1$, the $m$th order Orlicz projection operator  $\Pi_{\Phi_Q}^m$ reduces to the special cases in the recent works \cite{Haddad-Ye-2023,Haddad-ye-lp-2025} up to a constant. We  establish a higher-order Orlicz-Petty projection inequality related to $\Pi_{\Phi_Q}^{m, *} (K)$. Although $\Phi_Q$ may not be strictly convex, we are able to characterize the equality under the additional assumption on $\phi$, such as the strict convexity of $\phi$. 

Mathematics Subject Classification (2020):  52A39, 52A40;  Secondary: 28A75.  
\end{abstract}

\section{Introduction}\label{introduction}
Stemming from the work on the Orlicz projection and centroid bodies by Lutwak, Yang and Zhang \cite{orlicz projection,orlicz centroid}, the Orlicz-Brunn-Minkowski theory of convex bodies (compact convex subsets in the $n$-dimensional Euclidean space $\mathbb{R}^n$ with nonempty interiors) has attracted increasing interest in convex geometry and related fields, such as analysis and partial differential equations. This theory incorporates the Orlicz spaces into the geometric theory of convex bodies and has achieved significant advancements with the introduction of the Orlicz addition of convex bodies by Gardner, Hug and Weil \cite{Gardner-Hug-Weil-2014}, and independently by Xi, Jin and Leng \cite{Xi-Jin-Leng-2014}.  The Orlicz addition generates a non-homogeneous perturbation of convex bodies. When combined with the volume functional, it naturally leads to the Orlicz surface area measures of convex bodies. Characterizing these surface area measures forms the central goal of the Orlicz-Minkowski problem, initiated by Haberl, Lutwak, Yang and Zhang \cite{the even orlicz minkowski problem}. Since its inception, the Orlicz-Brunn-Minkowski theory has undergone rapid development. Some of the key contributions to this area include \cite{o1,o2, Huang-He-2012,o5,o6-Li-2014,o9,o10,o11,o12} for the Orlicz-Minkowski problem, \cite{Hong-Ye-Zhang-2018} for the $p$-capacitary Orlicz-Minkowski problem,  \cite{do 1,GHSD2019,GHXD2020,Huang-Xing-Ye-Zhu-IUMJ,do 3.1,Liu-Lu-2020,Xing-Ye-2020,Zhu-Ye-2018} for the general dual Orlicz-Minkowski problem, \cite{Gardner-Ye-2015,Zhu-Xu-2014} for the dual Orlicz-Brunn-Minkowski theory,  \cite{BoroJDG 2013,lin 2017,lin and xi,orlicz projection,orlicz centroid,Ye-2015,Zhu-2012-star-body} for the Orlicz affine isoperimetric inequalities, \cite{Luo-Ye-Zhu-2020, Xing-Ye-Zhu-2022,  zbc-hh-dpy-2018} for Orlicz-Petty bodies and related polar Orlicz-Minkowski problems, among others.  

Our main concern in this paper is the projection bodies, which play fundamental roles in numerous fields, such as measure theory, convex geometry, geometric tomography, optimization and functional analysis (see, e.g., \cite{pro application 1,pro application 2,Geometric tomography,pro application 0,pro application 4,schneider}). For the sake of consistency in notations, we identify $\mathbb{R}^n$ with the space of $n \times 1$ real matrices $M_{n,1}(\mathbb{R})$.
By $\mathcal{K}_{(o)}^{n,1}$, we mean that the set of convex bodies in $M_{n,1}(\mathbb{R})$ containing the origin $o$ in their interiors. Let $\varphi: \mathbb{R} \rightarrow [0,\infty)$ be a convex function such that $\varphi(0)=0$ and $\varphi(-t)+\varphi(t)>0$ for $t \in \mathbb{R} \setminus \{0\}$. The Orlicz projection body of $K \in \mathcal{K}_{(o)}^{n,1}$, denoted by $\Pi_{\varphi}K$, is a convex body in $M_{n,1}(\mathbb{R})$ defined through its support function \cite{orlicz projection}:
\begin{align}\label{def-orlicz-petty-body}
h_{\Pi_{\varphi}K}(x)=\inf\left\{\frac{1}{t}>0:\int_{S^{n-1}}\varphi\bigg(t\frac{x\bigcdot u}{ h_K(u)}\bigg)h_K(u)dS_K(u)\leq n V_n(K)\right\}\ \ \mathrm{for}\ \ x\in M_{n,1}(\mathbb{R}).
\end{align}
Here $S^{n-1}$ is the unit sphere of $M_{n,1}(\mathbb{R})$, $x \bigcdot u$ means the standard inner product of $x$ and $u$, $S_K$ denotes the surface area measure of $K$, $V_n(K)$ is the volume of $K$, and $h_K$ is the support function of $K$ (see Section \ref{notation} for their definitions). Special cases of the Orlicz projection bodies include the $L_p$ projection body for $p> 1$ by Lutwak, Yang and Zhang \cite{lp proj ineq 1}, and the asymmetric $L_p$ projection body for $p>1$ by Ludwig \cite{ludwig 2005} and Haberl and Schuster \cite{general lp affine iso ine}. 

The affine isoperimetric inequality related to the projection bodies is particularly important in applications. For the Orlicz projection bodies, such an affine isoperimetric inequality, often referred to as the Orlicz-Petty projection inequality \cite{orlicz projection}, states that
\begin{align}\label{orlicz-petty-projection-ine}
\frac{V_n(\Pi_{\varphi}^*K)}{V_n(K)}\leq\frac{V_n(\Pi_{\varphi}^*B_2^n)}{V_n(B_2^n)} \ \ \mathrm{for\ any } \ \ K\in\mathcal{K}_{(o)}^{n,1},
\end{align}
where $B_2^n$ is the unit ball in $M_{n,1}(\mathbb{R})$ and $\Pi_{\varphi}^*K=(\Pi_{\varphi} K)^*$ is the polar body of $\Pi_{\varphi} K$, i.e., $\Pi_{\varphi}^*K=\{y\in M_{n,1}(\mathbb{R}): x\bigcdot y\leq 1, x\in \Pi_{\varphi} K\}$. The Orlicz-Petty projection inequality extends the $L_p$ affine isoperimetric inequalities for the (asymmetric) $L_p$ projection bodies \cite{general lp affine iso ine, lp proj ineq 1, Petty isoperimetric problem}. Important applications and contributions for the various Petty projection bodies include, e.g.,  \cite{BoroJDG 2013, Haberl-2019-complex-affine-isop- inequalities, lin and xi,proj ineq 1,Lutwak 1986,proj inequality 2,pro application 4} for geometric inequalities, \cite{lp affine appl 1.1,asym affine lp sobo ine,an asymmetric affine polva-szego prin,lin 2017,sharpaffine lp sobol ine,Wang-tuo-2012,zhang affine sobolev ine} for functional inequalities, and \cite{lin 2021} for inequality for sets of finite perimeter.
 
 Let $m\in \mathbb{N}$ where $\mathbb{N}$ is the set of positive natural numbers. Inspired by the work of Schneider \cite{Schneider 0} for the higher-order difference body and related Rogers-Shephard inequality, Haddad, Langharst, Putterman, Roysdon and Ye introduced the $(L_p,Q)$-projection bodies for $p\geq 1$ in \cite{Haddad-Ye-2023,Haddad-ye-lp-2025}, and established related  higher-order Petty projection inequalities. In this setting, we consider the space of $n \times m$ real matrices $M_{n,m}(\mathbb{R})$ and denote by $\mathcal{K}_{o}^{1,m}$ the set of convex bodies in $M_{1,m}(\mathbb{R})$ containing $o$.  For $p\ge1$, $Q\in \mathcal{K}_{o}^{1,m}$ and $K\in\mathcal{K}_{(o)}^{n,1}$,  the $(L_p,Q)$-projection body  of $K$, denoted by $\Pi_{Q,p}K$, is defined through its support function \cite{Haddad-ye-lp-2025}:
\begin{align*}
h_{\Pi_{Q,p}K}(\pmb{x})^p=\int_{S^{n-1}}h_Q(v^{\mathrm{T}}{}_{\bigcdot}\, \pmb{x})^p h_K^{1-p}(v)d S_K(v)\ \ \mathrm{for}\ \  \pmb{x}\in M_{n,m}(\mathbb{R}), 
\end{align*} where $v^{\mathrm{T}}$ is the transpose of $v$, and $\pmb{x}=(x_1,\cdots, x_m)\in M_{n,m}(\mathbb{R})$ with each $x_i\in M_{n,1}(\mathbb{R})$ for $i\in \{1,\cdots,m\}$, and  $v^{\mathrm{T}}{}_{\bigcdot}\, \pmb{x}=(v\bigcdot x_1,\cdots,v\bigcdot x_m)$. Let $\left\{e_{1, i}\right\}_{i=1}^m$ be the canonical basis in $M_{1, m}(\mathbb{R})$ and $\operatorname{conv}(E)$ denote the closed convex hull of $E$. When $p=1$ and  $$Q=\Delta_m=\operatorname{conv}\left\{o, -e_{1,1}, \cdots, -e_{1, m}\right\},$$ 
then $$h_{Q}(v^{\mathrm{T}}{}_{\bigcdot}\, \pmb{x})=\max  \big\{0, -x_1\bigcdot v, \cdots, -x_m\bigcdot v \big\},$$ and $\Pi_{Q,p}K$ reduces to the $m$th higher-order projection body $\Pi^m K$ \cite{Haddad-Ye-2023} given by its support function: 
\begin{align*}
h_{\Pi^m K}(\pmb{x})=\int_{S^{n-1}} \max  \big\{0, -x_1\bigcdot v, \cdots, -x_m\bigcdot v\big \} d S_{K}(v).
\end{align*} When $m=1$, $\Pi_{Q,p}K$ covers the $L_p$ projection bodies in \cite{complex projection body,general lp affine iso ine,ludwig 2005,lp proj ineq 1,Petty isoperimetric problem} as its special cases.  The higher-order $L_p$ Petty projection inequality states that 
\begin{align}\label{higher-order-lp-ine}
\frac{V_n(K)}{V_n(B_2^n)}\leq \bigg(\frac{V_{nm}(\Pi^{*}_{Q,p}K)}{V_{nm}(\Pi^{*}_{Q,p}B_2^n)}\bigg)^{\frac{p}{m(p-n)}}.
\end{align} Here $\Pi^{*}_{Q,p}K=(\Pi_{Q,p}K)^*$ is the polar body of $\Pi_{Q,p}K$. See \cite{Haddad-Ye-2023,Haddad-ye-lp-2025} for more details and in particular, the characterization of equality for the above inequality. The higher-order theory has attracted increasing interest, and here we mention a few contributions in the higher-order setting: \cite{J. Haddad,mth order weighted projection body,mth order affine polya szego principle,higher order reverse isoperimetric inequality,higher order lp mean zonoids}.

 Motivated by the Orlicz projection body \cite{orlicz projection} and  the $(L_p,Q)$-projection body \cite{Haddad-Ye-2023,Haddad-ye-lp-2025}, in this paper, we study the $m$th order Orlicz projection body. To this end, let  $\mathcal{C}$ be the set of convex functions $\Phi: M_{1,m}(\mathbb{R})\rightarrow [0,\infty)$ satisfying that $\Phi(o)=0$ and $\Phi(z)+\Phi(-z)>0$ for $z\ne o$. For $\Phi\in\mathcal{C}$, the $m$th order Orlicz projection body of $K\in\mathcal{K}_{(o)}^{n,1}$, denoted as $\Pi_{\Phi}^{m}K$, is defined via its support function
\begin{align*}
h_{\Pi_{\Phi}^{m}K}(\pmb{x})=\inf\left\{\frac{1}{\lambda}>0: \int_{S^{n-1}}\Phi\bigg(\lambda\frac{{u}^{\mathrm{T}}{}_{\bigcdot}\,  {\pmb{x}}}{ h_K({u})}\bigg)h_K({u})dS_K({u})\leq nV_n(K)\right\} 
\end{align*} for $\pmb{x}\!=\!(x_1, \cdots, x_m)\!\in \!M_{n,m}(\mathbb{R})$ with each $x_i\!\in \!M_{n,1}(\R)$ for $i\!\in \!\{1, \cdots, m\}$. Denote by $\mathcal{K}_{(o)}^{n,1}$ the set of convex bodies in $M_{n,1}(\mathbb{R})$ containing the origin $o$ in their interiors. 
We will show that $\Pi_{\Phi}^{m}K \subset M_{n,m}(\mathbb{R})$ is a convex body containing $o$ in its interior, and the operator $\Pi_{\Phi}^{m}: \mathcal{K}_{(o)}^{n,1}\rightarrow \mathcal{K}_{(o)}^{n, m}$ is continuous in terms of the Hausdorff metric and is affine invariant. With the help of these properties, together with the classical Steiner symmetrization (see e.g., \cite{schneider}) and the Fiber symmetrization by McMullen \cite{Mcmullen-1999}, we can establish the following higher-order Orlicz-Petty projection inequality. See Bianchi, Gardner and Gronchi \cite{Bianchi-Gardner-Gronchi-2017} and Ulivelli \cite{Ulivelli} for more details on the Fiber symmetrization and its extensions.    

\vskip 2mm \noindent {\bf Theorem \ref{Main-Theory-1}.} {\em  
Let $\Phi\in\mathcal{C}$  and $\Pi^{m,*}_{\Phi}K=(\Pi^{m}_{\Phi}K)^*$ be the polar body of $\Pi^m_{\Phi}K$. For $K\in \mathcal{K}_{(o)}^{n,1}$, let  
\begin{align*}
\Gamma_{\Phi}(K)=\frac{V_{nm}(\Pi_{\Phi}^{m,*}K)}{V_{n}(K)^m}. 
\end{align*} Then, among $K\in\mathcal{K}_{(o)}^{n,1}$, $\Gamma_{\Phi}(K)$ 
is maximized at origin-symmetric ellipsoids, i.e., \begin{align} \Gamma_{\Phi}(K)\leq \Gamma_{\Phi}(\ball)\ \ \mathrm{for\ all}\ \ K\in \mathcal{K}_{(o)}^{n,1}. \label{Main-Theory-1-ineuqlity} \end{align} 
If $\Phi$ is strictly convex, then the origin-symmetric ellipsoids are the only maximizers. }

A special case of particular interest is when $\Phi$ equals $\Phi_Q=\phi\circ h_Q$, where  $Q\in\mathcal{K}_{o}^{1,m}$ and $\phi:[0,\infty)\to[0,\infty)$ is a  convex function such that $\phi(0)=0$ and $\phi$ is strictly increasing on $[0,\infty)$. It can be checked that $\Phi_Q\in\mathcal{C}$ (see the details in Section \ref{sec-definition}).  For convenience, let $\Pi_{\phi, Q}^{m}K=\Pi_{\Phi_Q}^m K$ and  $\Pi^{m,*}_{\phi, Q}K=(\Pi^{m}_{\phi, Q}K)^*$ be the polar body of $\Pi^m_{\phi, Q}K$. For $K\in \mathcal{K}_{(o)}^{n,1}$, let  \begin{align*}\Gamma_{\phi, Q}(K)=
\frac{V_{nm}(\Pi_{\phi, Q}^{m, *}K)}{V_{n}(K)^m}.
\end{align*} 
The following higher-order Orlicz-Petty projection inequality can be obtained. When $\phi(t)=t^p$ with $p> 1$, Theorem \ref{Phi-Q-strictly convex}
reduces to the higher-order $L_p$ Petty projection inequality \cite{Haddad-ye-lp-2025}, while if $\phi(t)=t$, it recovers the $m$th higher-order Petty projection inequality \cite{Haddad-Ye-2023} (without equality characterization). 
\vskip 2mm \noindent {\bf Theorem \ref{Phi-Q-strictly convex}} {\em 
Let  $Q\in\mathcal{K}_{o}^{1,m}$ and  $\phi:[0, \infty)\rightarrow [0,\infty)$ be a convex function such that $\phi(0)=0$ and $\phi$ is strictly increasing on $[0,\infty)$.  Then, among $K\in \mathcal{K}_{(o)}^{n,1}$, $\Gamma_{\phi, Q}(K)$ is maximized at origin-symmetric ellipsoids, i.e., \begin{align*}  
\Gamma_{\phi, Q}(K)\leq \Gamma_{\phi, Q}(\ball)\ \ \mathrm{for\ all}\ \ \mathcal{K}_{(o)}^{n,1}.\end{align*} If in addition $\phi$ is strictly convex on $[0,\infty)$, then the origin-symmetric ellipsoids are the only maximizers of  $\Gamma_{\phi, Q}(K)$. }

The structure of this paper is organized as follows.   Section \ref{notation} provides the necessary background and notations required for later context. Section \ref{sec-definition} dedicates to the definition of the $m$th order Orlicz projection body and to proving that $\Pi^m_{\Phi}K \in \mathcal{K}^{n,m}_{(o)}$ for each $K\in \mathcal{K}^{n,1}_{(o)}$. The continuity and the affine invariance of the operator  $\Pi^m_{\Phi}: \mathcal{K}^{n,1}_{(o)}\rightarrow \mathcal{K}^{n,m}_{(o)}$ are established in Section \ref{section-continuity}. We would like to mention that the proof of the continuity is much more delicate for $m\ge 2$, compared to the case $m=1$ and the case for the $(L_p,Q)$-projection body \cite{Haddad-Ye-2023,Haddad-ye-lp-2025}.
We will prove Theorem \ref{Main-Theory-1} in Section \ref{sec-inequality}  and Theorem \ref{Phi-Q-strictly convex} in Section  \ref{sec-characterization-equality}. 

\section{Preliminaries and notations}\label{notation}
In this section, we will explain some basic background and necessary notation. For more details, please refer to \cite{Geometric tomography, schneider}.
 
Let $n,m, k,l\in \mathbb{N}$, where $\mathbb{N}$ is the set of positive integers. Denote by $M_{n,m}(\mathbb{R})$ the space of real $n\times m$ matrices, and by $I_n$ the $n\times n$ identity matrix.  Whenever making sense, we use $A^\mathrm{T}$, $A^{-1}$,  $\mathrm{tr} A$ and $\det A$ to mean the transpose, the inverse, the trace and the determinant of a matrix $A$, respectively. For convenience, we let  $A^\mathrm{-T}=(A^\mathrm{T})^{-1}.$ The group of invertible $n\times n$ matrices is denoted by $GL(n)\subset M_{n,n}(\mathbb{R})$, and $SL(n)$ refers to the subgroup consisting of matrices whose determinants are $1$. By $\pmb{x}{}_{\bigcdot}\, \pmb{y}$ we mean the matrix product of $\pmb{x}\in M_{n,k}(\mathbb{R})$ and $\pmb{y}\in M_{k,m}(\mathbb{R})$, and hence $\pmb{x}{}_{\bigcdot}\, \pmb{y} \in M_{n,m}(\mathbb{R})$.
We will use boldface letters, such as $\pmb{x}, \pmb{y}$ to represent element in $M_{n,m}(\mathbb{R})$, while we will use lowercase letters such as $x, y$ to represent elements in $M_{n,1}(\mathbb{R})$ and $M_{1,m}(\mathbb{R})$. For $\pmb{x},\pmb{y}\in M_{n,m}(\mathbb{R})$, let $\pmb{x}\bigcdot \pmb{y}= \mathrm{tr}(\pmb{x}^\mathrm{T} {}_{\bigcdot}\, \pmb{y}) \in \R$ and let $|\pmb{x}|=\sqrt{\pmb{x}\bigcdot \pmb{x}}$. By $V_{nm}(K)$ we mean the volume of $K\subseteq M_{n,m}(\mathbb{R})$. The origin of $M_{n,m}(\mathbb{R})$ is denoted by $o$. Note that the symbol $o$ is always for the origin whose dimension may vary depending on the context.

We often identify $M_{n,m}(\mathbb{R})$ as $\R^d$ with $d=mn$ whenever the matrix multiplication is not under consideration: a matrix $\pmb{x}\in M_{n,m}(\mathbb{R})$ will be identified as $(x_i)_i=(x_1,\cdots,x_m)$, where each $x_i\in M_{n,1}(\mathbb{R})$ is the $i$-th column of $\pmb{x}$. So notations regarding convex bodies in $\R^d$ can be easily transited to the matrix setting. For convenience, 
the Euclidean unit ball in $M_{n,m}(\mathbb{R})$ will be simply written by $B_2^{nm}$ and the unit Euclidean sphere in $M_{n,m}(\mathbb{R})$ by $S^{nm-1}$, that is, $$B_2^{nm}=\{\pmb{x}\in M_{n,m}(\mathbb{R}): \pmb{x}\bigcdot \pmb{x}\leq 1\}\ \ \mathrm{and}\ \ S^{nm-1}=\{\pmb{x}\in M_{n,m}(\mathbb{R}): \pmb{x}\bigcdot \pmb{x}=1\}.$$    

We say $K\subseteq M_{n,m}(\mathbb{R})$ is convex if $\lambda \pmb{x}+(1-\lambda)\pmb{y}\in K$  for all $\pmb{x}, \pmb{y}\in K$ and   $\lambda\in (0,1)$.  A convex set $K\subseteq M_{n,m}(\mathbb{R})$ is called a convex body if $K$ is compact  with nonempty interior. 
Denote by $\mathcal{K}^{n,m}$ the set of convex bodies in $M_{n,m}(\mathbb{R})$. With extra subscripts $o$ and $(o)$ in $\mathcal{K}^{n,m}$, we mean the set of convex bodies containing the origin $o$, and respectively, the set of convex bodies containing $o$ in their interiors. A convex body $K$ is said to be origin-symmetric if $K=-K=\{-\pmb{x}: \pmb{x}\in K\}$. We call $K\subset M_{n,1}(\mathbb{R})$ an origin-symmetric ellipsoid if $K=AB^{n}_2$ for some $A\in GL(n)$. For $E\subset M_{k,m}(\mathbb{R})$, and $\pmb{x}\in M_{n,k}(\mathbb{R})$, let $$\pmb{x}{}_{\bigcdot}   E=\{\pmb{x}{}_{\bigcdot}\, \pmb{y}: \  \pmb{y}\in E\}.$$

For $K\in\mathcal{K}^{n,m}$, the support function of $K$, denoted by $h_K: M_{n,m}(\mathbb{R})\rightarrow\mathbb{R}$, is defined by
\begin{align}\label{support-def-1}
h_K(\pmb{x})=\max\left\{\pmb{x}\bigcdot \pmb{y}:\pmb{y}\in K\right\}  \ \ \text{for}~ \pmb{x}\in M_{n,m}(\mathbb{R}).
\end{align} It can be easily checked that $h_K$ is convex on $ M_{n,m}(\mathbb{R})$ such that 
\begin{align} \label{hom-1-1}
h_K(c   \pmb{x})&=ch_K(\pmb{x})  \ \ \mathrm{for} \ \  \pmb{x}\in M_{n,m}(\mathbb{R}) \ \ \mathrm{and} \ \ c>0,\\ 
\label{h AK} 
h_{K}(A{}_{\bigcdot}\, \pmb{x})&=h_{A^{\mathrm{T}}{}_{\bigcdot}\, K}(\pmb{x}) \ \ \mathrm{for} \ \ A\in M_{n,k}(\mathbb{R})  \ \ \mathrm{and} \ \ \pmb{x}\in M_{k,m}(\mathbb{R}),\\
\label{h KB} 
h_{K}(\pmb{y}{}_{\bigcdot}\, B)&=h_{K{}_{\bigcdot}\, B^{\mathrm{T}}}(\pmb{y})\ \ \mathrm{for} \ \ B\in M_{l,m}(\mathbb{R})  \ \ \mathrm{and} \ \ \pmb{y}\in M_{n,l}(\mathbb{R}).
\end{align} It is also well-known that if a function on $ M_{n,m}(\mathbb{R})$ is convex and satisfies \eqref{hom-1-1}, then it must be the support function of a convex body in $ M_{n,m}(\mathbb{R})$, and hence uniquely determines a convex body in $ M_{n,m}(\mathbb{R})$. 
For $K_1, K_2\in \mathcal{K}^{n,m}$, we define the Hausdorff distance between $K_1$ and $K_2$ by \begin{align}
d_H(K_1, K_2) 
=\sup_{\pmb{u}\in S^{nm-1}}\Big|h_{K_1}(\pmb{u})-h_{K_2}(\pmb{u})\Big|. \label{metric-Hsdf}
\end{align} 
Let $\mathbb{N}_0=\mathbb{N}\cup\{0\}$ and  $\{K_j\}_{j\in \mathbb{N}_0}\subset \mathcal{K}^{n,m}$ be a sequence of convex bodies in $ M_{n,m}(\mathbb{R})$. We say $K_j\rightarrow K_0$ in the Hausdorff metric if $d_H(K_j,K_0)\rightarrow 0$ as $j\rightarrow \infty.$ 

For $K\in \mathcal{K}^{n,m}$, denote by $\partial K$ the boundary of $K$, {$\mathrm{int}K$ the interior of $K$, and $\nu_K(\pmb{y})$ an outer unit normal vector of $K$ at $\pmb{y}\in\partial K$. The reverse of $\nu_K$ is denoted by $\nu_K^{-1}.$ By  
 $\mathcal{H}^{nm-1}|_E$, we mean the $(nm-1)$-dimensional Hausdorff measure of $E\subset M_{n,m}(\mathbb{R})$. We often use $\mathcal{H}$ when the set $E$ and the dimension are clearly identified. In particular, we let $\,d\pmb{u}=\,d\mathcal{H}|_{S^{nm-1}}.$ Define $S_K$, the surface area measure of $K$, on $S^{nm-1}$ by $$S_K(\eta)=\mathcal{H}^{nm-1}\big(\nu_K^{-1}(\eta)\big)\ \ \mathrm{for\ each\ Borel\ set}\ \eta\subset S^{nm-1}.$$ It is well-known that, for each continuous function $f:S^{nm-1}\rightarrow \mathbb{R}$, 
\begin{align}\label{ S k and H n-1}
\int_{S^{nm-1}}f(\pmb{u})dS_K(\pmb{u})=\int_{\partial K} f\big(\nu_K(\pmb{y})\big)d\mathcal{H}^{nm-1}(\pmb{y}).
\end{align}

Let $L\in\mathcal{K}_{(o)}^{n,m}$. Its polar body, denoted by $L^*$, is defined by  
\begin{align}\label{L-star}
L^* =\left\{\pmb{x}\in M_{n,m}(\mathbb{R}):\pmb{x}\bigcdot \pmb{y}\leq1 \ ~\text{for}~\ \pmb{y}\in L \right\}.
\end{align} 
Clearly, for $L\in\mathcal{K}_{(o)}^{n,m}$, $h_{L^*}(\pmb{x})=1$ if and only if $\pmb{x}\in\partial{L}$. Moreover, $(cL)^*=(1/c)L^*$ for $c>0$, and   $(A{}_{\bigcdot}\, L)^*=A^{-\mathrm{T}}{}_{\bigcdot}\, L^*$ for $A\in GL(n)$. The volume of $L^*\in \mathcal{K}_{(o)}^{n,m}$ can be calculated by 
\begin{align*}
V_{nm}(L^*)=\frac{1}{nm}\int_{S^{nm-1}} \Big(\frac{1}{h_L(\pmb{u})}\Big)^{nm}d\pmb{u}. 
\end{align*}  

\section{The $m$th order Orlicz projection body}\label{sec-definition}
Inspired by the Orlicz projection bodies \cite{orlicz projection} and the $(L_p,Q)$-projection bodies \cite{Haddad-ye-lp-2025},  we introduce the $m$th higher-order Orlicz $L_{\Phi}$ projection body in this section.
 
Let  $\Phi:M_{1,m}(\mathbb{R})\rightarrow[0,\infty)$ be a continuous, convex function, that is, 
\begin{align*}
\Phi (\lambda z+(1-\lambda)\widetilde{z} )\leq \lambda \Phi(z) +(1-\lambda)\Phi(\widetilde{z})\ \ \mathrm{for}\ \lambda>0 \ \mathrm{and}\  z,\, \widetilde{z}\in M_{1,m}(\mathbb{R}).
\end{align*} 
Denote by $\mathcal{C}$ the set of convex functions $\Phi: M_{1,m}(\mathbb{R})\rightarrow [0, \infty)$ satisfying that   $\Phi(o)=0$, and   \begin{align} \Phi(z)+\Phi(- z)>0  \ \ \mathrm{for} \ \ z\neq o.\label{positive-z}\end{align} Particularly, the function $\Phi(z)+\Phi(-z): S^{m-1}\rightarrow \mathbb{R}$ is positive and  continuous on $S^{m-1}$. Thus,  
\begin{align}\label{mprime}
 \min_{z\in S^{m-1}} \{\Phi(z)+\Phi(-z)\}>0.
\end{align}  
Notice that \eqref{positive-z} is equivalent to that either $\Phi(z)$  or $\Phi(-z)$ is positive.  If $\Phi$ is a nonnegative convex function such that $\Phi(o)=0$, then for $z\neq o$ such that $\Phi(z)> 0$, one has  
 \begin{align}\label{Phi-ru-to-infty}
\lim_{r\rightarrow\infty}\Phi(rz) \geq \lim_{r\rightarrow \infty} r\Phi(z)=\infty,
 \end{align}  
a direct consequence of the following inequality:
\begin{align}\label{phi-rz-ge-r-phi-z}
\Phi(z)=\Phi\bigg(\frac{1}{r}rz+\Big(1-\frac{1}{r}\Big)o\bigg)\leq \frac{1}{r}\Phi(rz), \ \ \ r>1. 
 \end{align}

Our main object in this paper is the following $m$th higher-order Orlicz $L_{\Phi}$ projection body of $K\in\mathcal{K}_{(o)}^{n,1}$, denoted by $\Pi_{\Phi}^{m}K$ and often called by the $m$th order Orlicz projection body of $K$.  For $\pmb{x}=({x}_i)_i\in M_{n,m}(\mathbb{R})\setminus\{o\}$ with each ${x}_i\in M_{n,1}(\mathbb{R})$, define  $H_{\pmb{x}}:[0,\infty)\rightarrow\mathbb{R}$ by
\begin{align}\label{H-x}
H_{\pmb{x}}(t)=\int_{S^{n-1}}\Phi \bigg(t \frac{{u}^{\mathrm{T}}{}_{\bigcdot}\,  \pmb{x}}{h_K({u})}\bigg)h_K({u})dS_K({u}).
\end{align}  
\begin{definition}\label{supp-pro}
Let $\Phi\in\mathcal{C}$ and $K\in\mathcal{K}_{(o)}^{n,1}$. Define the function $h_{\Pi_{\Phi}^{m}K}(\bigcdot): M_{n,m}(\mathbb{R})\rightarrow (0,\infty)$ by $h_{\Pi_{\Phi}^{m}K}(o)=0$, and for $\pmb{x}\in M_{n,m}(\mathbb{R})\setminus\{o\},$ 
\begin{align*}
h_{\Pi_{\Phi}^{m}K}(\pmb{x})=\inf\left\{\frac{1}{t}>0: H_{\pmb{x}}(t)\leq nV_n(K)\right\}.
\end{align*}
\end{definition}

To study the properties of $h_{\Pi_{\Phi}^{m}K}(\bigcdot)$, we shall need the following proposition. 
\begin{proposition}\label{Proposition-Hx}
Let $\Phi\in\mathcal{C}$, $K\in\mathcal{K}_{(o)}^{n,1}$ and $\pmb{x}\in M_{n,m}(\mathbb{R})\setminus\{o\}$.
Then the function $H_{\pmb{x}}(\bigcdot)$ is continuous and convex on $[0,\infty)$ such that 
\begin{align}\label{h-pro1}
\lim\limits_{ t \rightarrow0^+}H_{\pmb{x}}(t) =0 \ \ \mathrm{and} \ \  
\lim\limits_{t \rightarrow\infty}H_{\pmb{x}}(t)=\infty.
\end{align} Moreover, there exists a unique $t_0\in (0, \infty)$ such that $H_{\pmb{x}}(t_0)=nV_n(K).$
\end{proposition}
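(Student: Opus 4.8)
The plan is to establish each claimed property of $H_{\pmb{x}}$ in turn, leveraging the structure of the integrand. First I would verify \emph{continuity and convexity}. For fixed $\pmb{x}\neq o$, the map $t\mapsto \Phi\big(t\,u^{\mathrm{T}}{}_{\bigcdot}\,\pmb{x}/h_K(u)\big)h_K(u)$ is, for each fixed $u\in S^{n-1}$, the composition of the affine map $t\mapsto t\,u^{\mathrm{T}}{}_{\bigcdot}\,\pmb{x}/h_K(u)$ with the convex function $\Phi$, scaled by the positive constant $h_K(u)$; hence it is convex and continuous in $t$ on $[0,\infty)$. Convexity of $H_{\pmb{x}}$ then follows by integrating the pointwise convexity inequality against the finite measure $S_K$. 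For continuity, I would note that $h_K$ is bounded above and below by positive constants on $S^{n-1}$ (since $K\in\mathcal{K}_{(o)}^{n,1}$), so for $t$ in a bounded interval $[0,T]$ the argument $t\,u^{\mathrm{T}}{}_{\bigcdot}\,\pmb{x}/h_K(u)$ ranges over a bounded subset of $M_{1,m}(\mathbb{R})$; since $\Phi$ is continuous hence bounded on compacta, the integrand is uniformly bounded on $[0,T]\times S^{n-1}$, and dominated convergence (with $S_K$ a finite measure) gives continuity of $H_{\pmb{x}}$ on $[0,\infty)$.

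Next I would treat the \emph{limits} in \eqref{h-pro1}. As $t\to 0^+$, the integrand tends pointwise to $\Phi(o)\,h_K(u)=0$ and is dominated as above, so $H_{\pmb{x}}(t)\to 0$ by dominated convergence. For $t\to\infty$: since $\pmb{x}\neq o$, there is a column $x_i\neq o$, and the set of $u\in S^{n-1}$ with $|u\bigcdot x_i|$ bounded away from $0$ has positive $S_K$-measure (as $S_K$ is not concentrated on any great subsphere — it is the surface area measure of a full-dimensional body and its support spans). On such a set, at least one of $u^{\mathrm{T}}{}_{\bigcdot}\,\pmb{x}$ or $-u^{\mathrm{T}}{}_{\bigcdot}\,\pmb{x}$ is a nonzero vector $z$ with $\Phi(z)>0$ or $\Phi(-z)>0$ by \eqref{positive-z}; combined with \eqref{Phi-ru-to-infty} (equivalently \eqref{phi-rz-ge-r-phi-z}), $\Phi\big(t\,u^{\mathrm{T}}{}_{\bigcdot}\,\pmb{x}/h_K(u)\big)\to\infty$ on a set of positive measure, and Fatou's lemma forces $H_{\pmb{x}}(t)\to\infty$. (One must be slightly careful that $\Phi$ could vanish in one direction; using $\min_{z\in S^{m-1}}\{\Phi(z)+\Phi(-z)\}>0$ from \eqref{mprime} after splitting $S^{n-1}$ according to the sign that makes $\Phi$ large handles this cleanly.)

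Finally, the \emph{existence and uniqueness of $t_0$}: by continuity and the two limits, $H_{\pmb{x}}$ is a continuous function on $[0,\infty)$ with $H_{\pmb{x}}(0)=0< nV_n(K)$ and $H_{\pmb{x}}(t)\to\infty$, so the intermediate value theorem yields at least one $t_0$ with $H_{\pmb{x}}(t_0)=nV_n(K)$. For uniqueness I would argue that $H_{\pmb{x}}$ is \emph{strictly} increasing on the region where it is positive: since $H_{\pmb{x}}$ is convex with $H_{\pmb{x}}(0)=0$, if it equalled the value $nV_n(K)>0$ at two points $t_1<t_2$, convexity on $[0,t_2]$ together with $H_{\pmb{x}}(0)=0$ would force $H_{\pmb{x}}$ to be non-decreasing and in fact $H_{\pmb{x}}(t_1)\le \tfrac{t_1}{t_2}H_{\pmb{x}}(t_2) < H_{\pmb{x}}(t_2)$ once $H_{\pmb{x}}(t_2)>0$, a contradiction; hence the level set is a single point.

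\textbf{Main obstacle.} The routine parts are continuity and the $t\to0^+$ limit. The one step that needs genuine care is the blow-up as $t\to\infty$: it requires knowing that the surface area measure $S_K$ gives positive mass to a set of directions $u$ on which $u^{\mathrm{T}}{}_{\bigcdot}\,\pmb{x}$ is bounded away from the origin in $M_{1,m}(\mathbb{R})$, and then combining this with the asymmetry hypothesis \eqref{positive-z} (via \eqref{mprime}) to ensure $\Phi$ — rather than merely $\Phi$ composed with a sign — actually blows up on a positive-measure set. Handling the possibility that $\Phi$ vanishes on a half-space through $o$ is the subtle point, and the clean fix is the sign-splitting argument using \eqref{mprime}.
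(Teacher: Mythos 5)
Your arguments for continuity, convexity, the $t\to0^+$ limit, and the existence/uniqueness of $t_0$ all match the paper's proof: dominated convergence using the positive lower and finite upper bounds of $h_K$ on $\sphere$, pointwise convexity of the integrand, and the observation that convexity together with $H_{\pmb{x}}(0)=0$ forces strict monotonicity wherever $H_{\pmb{x}}>0$ (whence the level set of $nV_n(K)>0$ is a single point). These parts are fine.

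The gap is precisely where you flag it, and the ``sign-splitting plus \eqref{mprime}'' remark does not close it. From \eqref{positive-z} you get, for $u$ with $u^{\mathrm{T}}{}_{\bigcdot}\,\pmb{x}\neq o$, that at least one of $\Phi(u^{\mathrm{T}}{}_{\bigcdot}\,\pmb{x})$, $\Phi(-u^{\mathrm{T}}{}_{\bigcdot}\,\pmb{x})$ is positive; but $H_{\pmb{x}}(t)$ only sees $\Phi$ along the ray $\{a\,u^{\mathrm{T}}{}_{\bigcdot}\,\pmb{x}: a\ge 0\}$, and $\Phi$ may vanish identically on that ray for every $u$ in your positive-measure set while blowing up only along the opposite ray. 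Your sketch therefore never establishes that the set of $u$ with $\Phi(tu^{\mathrm{T}}{}_{\bigcdot}\,\pmb{x}/h_K(u))\to\infty$ carries positive $S_K$-measure; it only controls the union of that set with its antipodal image. The paper fills the gap with a convex-geometric step you did not supply: it shows $M_2=\{z\in M_{n,1}(\mathbb{R}):\Phi(z^{\mathrm{T}}{}_{\bigcdot}\,\pmb{x})=0\}$ is closed and convex with $o\in\partial M_2$ (exactly what \eqref{positive-z} says), hence $M_2$ lies in a closed half-space; consequently $M_3$, the set of $u\in\sphere$ with $\Phi(au^{\mathrm{T}}{}_{\bigcdot}\,\pmb{x})=0$ for some $a>0$, lies in a closed hemisphere, so $S_K(\sphere\setminus M_3)>0$, and on $\sphere\setminus M_3$ the integrand blows up and Fatou finishes. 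The alternative your hint is groping toward (and which the paper actually uses later, in Proposition~\ref{continuity}) avoids identifying a good set altogether: split $\sphere$ by the sign of $u\bigcdot x_i$ for some $x_i\neq o$, apply Jensen's inequality on each half (valid since $\Phi$ is convex with $\Phi(o)=0$), and use $\int_{\sphere}u\,dS_K(u)=o$ to see that the two Jensen lower bounds have the form $\Phi(w_t)$ and $\Phi(-w_t)$ with $|w_t|\to\infty$, so that $H_{\pmb{x}}(t)\ge nV_n(K)\big(\Phi(w_t)+\Phi(-w_t)\big)\to\infty$ by \eqref{mprime} and \eqref{phi-rz-ge-r-phi-z}. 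Either route works, but neither is actually carried out in your sketch, and without one of them the $t\to\infty$ limit is not established.
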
 
\begin{proof}
The continuity of $H_{\pmb{x}}(\bigcdot)$ follows directly from the dominated convergence theorem, the continuity of $\Phi$, and the fact that $h_K$ has positive lower bound and finite upper bound due to $K\in\mathcal{K}_{(o)}^{n,1}$; in particular, the first equality in  \eqref{h-pro1} holds. 
 
We now prove the second equality in \eqref{h-pro1}. To this end, for fixed $\pmb{x}\in M_{n, m}(\mathbb{R})$, let 
\begin{align*}
M_1=\cap_{i=1}^m  x_i^{\perp}\ \ \mathrm{and} \ \ 
M_2=\left\{z\in M_{n,1}(\mathbb{R}): \Phi\big({z^{\mathrm{T}}{}_{\bigcdot}\, \pmb{x}}\big)=0\right\},  
\end{align*} 
where $z^{\perp}$ denotes the $(n-1)$-dimensional subspace orthogonal to $z\in M_{n,1}(\mathbb{R})$. Clearly, 
$M_1\subset M_2$ because $u^{\mathrm{T}}{}_{\bigcdot}\, \pmb{x}=o$ for $u\in M_1$ and $\Phi(o)=0$.  It can be checked that $M_2$ is both closed and convex. For the convexity of $M_2$, we can check that, for  $z_1,z_2\in M_2$ and $\eta\in[0,1]$,   the convexity of $\Phi$ implies  
\begin{align*}
0\leq \Phi\left(\big(\eta z_1+(1-\eta)z_2\big)^{\mathrm{T}}{}_{\bigcdot}\, \pmb{x}\right)\leq \eta\Phi\big({z_1^{\mathrm{T}}{}_{\bigcdot}\, \pmb{x}}\big)+(1-\eta)\Phi\big({z_2^{\mathrm{T}}{}_{\bigcdot}\, \pmb{x}}\big)=0.
\end{align*}  That is,  $\eta z_1+(1-\eta)z_2\in M_2$, and thus $M_2$ is convex. For the closedness of $M_2$, let $z$ be in $\overline{M_2}$, the closure of $M_2$. Then there exists $\{z_j\}_{j\in\mathbb{N}}\subset M_2$  such that $z_j\rightarrow z$ as $j\rightarrow \infty$.  By the continuity of $\Phi$ and $z_j^{\mathrm{T}}{}_{\bigcdot}\, \pmb{x}\rightarrow z^{\mathrm{T}}{}_{\bigcdot}\, \pmb{x}$ as $j\rightarrow \infty$, we have 
\begin{align*}
\Phi\left({z^{\mathrm{T}}{}_{\bigcdot}\, \pmb{x}}\right)=\lim_{j\rightarrow \infty}\Phi\left({z_j^{\mathrm{T}}{}_{\bigcdot}\, \pmb{x}}\right)=0.
\end{align*} Therefore,  $z\in M_2$ and $M_2$ is closed.  

On the other hand, it follows from \eqref{positive-z} that, for $z\in M_{n, 1}(\mathbb{R)}\setminus  M_1$, either $\Phi\left(z^{\mathrm{T}}{}_{\bigcdot}\, \pmb{x}\right)>0,$ or $\Phi\left(-z^{\mathrm{T}}{}_{\bigcdot}\, \pmb{x}\right)>0$, or both hold true. Consequently, $o\in\partial M_2$. As  $M_2$ is closed and convex, it follows from  \cite[Theorem 1.3.2]{schneider} that there exists a support hyperplane for $M_2$ passing through $o$. In particular, $M_2$ is contained in a closed half space. This further shows that  
\begin{align}\label{M-3}
M_3=\left\{u\in S^{n-1}: \Phi(a u ^{\mathrm{T}}{}_{\bigcdot}\, \pmb{x})=0 \ \ \mathrm{for\ some}\ \ a>0 \right\}
\end{align}
is contained in a closed hemisphere of $S^{n-1}$, and hence  $S^{n-1}\setminus M_3$ contains an open hemisphere.  As $S_K(\bigcdot)$ is not concentrated on any closed hemisphere,  $S_K(S^{n-1}\setminus M_3)>0.$ Hence, for any $t\in (0, \infty),$  it follows from the fact that $\Phi$ is nonnegative that 
$$H_{\pmb{x}}(t)=\int_{S^{n-1} }\Phi\bigg(t\frac{u^{\mathrm{T}}{}_{\bigcdot}\,  \pmb{x}}{h_K(u)}\bigg)h_K(u)dS_K(u)\geq  \int_{S^{n-1} \setminus M_3}\Phi\bigg(t\frac{u^{\mathrm{T}}{}_{\bigcdot}\,  \pmb{x}}{h_K(u)}\bigg)h_K(u)dS_K(u)>0.$$ 
Note that, if $u\in S^{n-1}\setminus M_3,$ then 
$$ \lim_{t\rightarrow \infty} \Big| t\frac{u^{\mathrm{T}}{}_{\bigcdot}\,  \pmb{x}}{h_K(u)}\Big|=\infty. $$ It follows from \eqref{Phi-ru-to-infty} that $$\lim_{t\rightarrow\infty}\Phi\bigg(t\frac{u^{\mathrm{T}}{}_{\bigcdot}\,  \pmb{x}}{h_K(u)}\bigg)=\infty. $$ Combining with the continuity of $\Phi$ and Fatou's lemma, one gets 
\begin{align*}
\liminf_{t\rightarrow\infty}H_{\pmb{x}}(t)
&\ge
\liminf_{t\rightarrow\infty}\int_{S^{n-1} \setminus M_3}\Phi\bigg(t\frac{u^{\mathrm{T}}{}_{\bigcdot}\,  \pmb{x}}{h_K(u)}\bigg)h_K(u)dS_K(u)\\
&\ge\int_{S^{n-1}\setminus M_3}\liminf_{t\rightarrow\infty}\Phi\bigg(t\frac{u^{\mathrm{T}}{}_{\bigcdot}\,  \pmb{x}}{h_K(u)}\bigg)h_K(u)dS_K(u)=\infty.
\end{align*}

We now prove the convexity of $H_{\pmb{x}}(\bigcdot)$. To this end, for $t_1,t_2\in [0,\infty)$ and $\eta\in[0,1]$, the convexity of $\Phi$ implies that
\begin{align*}
\Phi\bigg(\big(\eta t_1+(1-\eta)t_2\big)\frac{{u}^{\mathrm{T}}{}_{\bigcdot}\,  \pmb{x}}{ h_K({u})}\bigg)
\leq \eta \Phi\bigg(t_1
\frac{{u}^{\mathrm{T}}{}_{\bigcdot}\,  \pmb{x}}{h_K({u})}\bigg)+(1-\eta)\Phi\bigg(t_2
\frac{{u}^{\mathrm{T}}{}_{\bigcdot}\,  \pmb{x}}{h_K({u})}\bigg).
\end{align*}
Integrating the above inequality with respect to $h_KdS_K$, we obtain $$H_{\pmb{x}}(\eta t_1+(1-\eta)t_2)\leq \eta H_{\pmb{x}}(t_1)+(1-\eta)H_{\pmb{x}}(t_2),$$  which gives the convexity of $H_{\pmb{x}}(\bigcdot)$.  

Finally, the fact that $H_{\pmb{x}}(\bigcdot)$ is continuous on $[0, \infty)$ implies that the set $$H_{\pmb{x}}^{-1}(\{0\})=\{t\in [0, \infty): H_{\pmb{x}}(t)=0\}$$ is a closed set. Since $\lim_{t\rightarrow \infty} H_{\pmb{x}}(t)=\infty$, then $$b_0=\sup\{t\in [0, \infty): H_{\pmb{x}}(t)=0\}<\infty.$$ Therefore, $H_{\pmb{x}}(\bigcdot)$ is strictly increasing on $[b_0, 
\infty).$ To see this, assume that there exist $b_0\leq t_1<t_2<\infty$ such that $H_{\pmb{x}}(t_1)=H_{\pmb{x}}(t_2).$ By the  facts that $H_{\pmb{x}}(0)=0$ and $H_{\pmb{x}}(\bigcdot)$ is a convex function, one has   \begin{align*}
H_{\pmb{x}}(t_1)=H_{\pmb{x}}\bigg(\frac{t_1}{t_2}t_2+\bigg(1-\frac{t_1}{t_2}\bigg)0\bigg)\leq \frac{t_1}{t_2}H_{\pmb{x}}(t_2)<H_{\pmb{x}}(t_1),
\end{align*} a contradiction. This shows that $H_{\pmb{x}}(\bigcdot)$ is strictly increasing on $[b_0, \infty).$ In particular, there exists a unique $t_0=t_0(\pmb{x})\in (0, \infty)$ such that $H_{\pmb{x}}(t_0)=nV_n(K).$  This completes the proof. \end{proof}

Based on Definition \ref{supp-pro} and Proposition \ref{Proposition-Hx}, for $\Phi\in\mathcal{C}$ and $K\in\mathcal{K}_{(o)}^{n,1}$, by letting $t=\frac{1}{h_{\Pi_{\Phi}^{m}K}(\pmb{x})}$, one has, for $\pmb{x}\in M_{n,m}(\mathbb{R})\setminus \{o\}$,  
\begin{align}\label{lemma equiv of pro body}
H_{\pmb{x}}\bigg(\frac{1}{h_{\Pi_{\Phi}^{m}K}(\pmb{x})}\bigg)=\int_{S^{n-1}}\Phi \bigg( \frac{{u}^{\mathrm{T}}{}_{\bigcdot}\,  \pmb{x}}{h_{\Pi_{\Phi}^{m}K}(\pmb{x})h_K({u})}\bigg)h_K({u})dS_K({u})=nV_n(K).
\end{align} 
We now prove that the function $h_{\Pi_{\Phi}^{m}K}(\bigcdot): M_{n,m}(\mathbb{R})\rightarrow [0, \infty)$ is a convex function with positive homogeneity of degree $1$, and hence it uniquely determines a convex body in $M_{n, m}(\mathbb{R})$. 

\begin{proposition} Let $\Phi\in\mathcal{C}$ and $K\in\mathcal{K}_{(o)}^{n,1}$. Then, $\Pi_{\Phi}^{m}K\in \mathcal{K}_{(o)}^{n,m}$, i.e., $\Pi_{\Phi}^{m}K$ is a convex body in $M_{n, m}(\mathbb{R})$ containing the origin in its interior. 
\end{proposition}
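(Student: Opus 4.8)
The plan is to verify three facts in turn: that $h_{\Pi_\Phi^m K}$ is positively homogeneous of degree $1$, that it is convex on $M_{n,m}(\mathbb{R})$, and that it is finite and bounded away from $0$ on $S^{nm-1}$, so that the body it determines is compact with $o$ in its interior. Homogeneity is immediate from Definition~\ref{supp-pro} and the degree-$1$ homogeneity of $\Phi(t\,\bigcdot)$ in $t$: for $c>0$ and $\pmb{x}\neq o$ the substitution $t\mapsto t/c$ in the defining infimum gives $h_{\Pi_\Phi^m K}(c\pmb{x})=c\,h_{\Pi_\Phi^m K}(\pmb{x})$, and the case $\pmb{x}=o$ is trivial since $h_{\Pi_\Phi^m K}(o)=0$.

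For convexity, I would argue directly from the characterizing identity \eqref{lemma equiv of pro body}. Fix $\pmb{x},\pmb{y}\in M_{n,m}(\mathbb{R})\setminus\{o\}$ and $\eta\in[0,1]$ with $\eta\pmb{x}+(1-\eta)\pmb{y}\neq o$; write $a=h_{\Pi_\Phi^m K}(\pmb{x})$, $b=h_{\Pi_\Phi^m K}(\pmb{y})$, $c=\eta a+(1-\eta)b$, and set $\lambda=\eta a/c$, so $1-\lambda=(1-\eta)b/c$. The key algebraic observation is the pointwise identity
\begin{align*}
\frac{(\eta\pmb{x}+(1-\eta)\pmb{y})^{\mathrm{T}}{}_{\bigcdot}\,?}{c}
=\lambda\,\frac{\pmb{x}^{\mathrm{T}}{}_{\bigcdot}\,?}{a}+(1-\lambda)\,\frac{\pmb{y}^{\mathrm{T}}{}_{\bigcdot}\,?}{b},
\end{align*}
where $?$ denotes $u/h_K(u)$; applying convexity of $\Phi$ and then integrating against $h_K\,dS_K$ yields
\[
H_{\eta\pmb{x}+(1-\eta)\pmb{y}}\!\left(\tfrac{1}{c}\right)\le \lambda\, H_{\pmb{x}}\!\left(\tfrac1a\right)+(1-\lambda)\,H_{\pmb{y}}\!\left(\tfrac1b\right)=nV_n(K).
\]
Since $H_{\pmb{z}}$ is increasing on the relevant range and equals $nV_n(K)$ exactly at $t=1/h_{\Pi_\Phi^m K}(\pmb{z})$ (Proposition~\ref{Proposition-Hx}), the inequality $H_{\eta\pmb{x}+(1-\eta)\pmb{y}}(1/c)\le nV_n(K)$ forces $1/c\le 1/h_{\Pi_\Phi^m K}(\eta\pmb{x}+(1-\eta)\pmb{y})$, i.e.\ $h_{\Pi_\Phi^m K}(\eta\pmb{x}+(1-\eta)\pmb{y})\le c=\eta h_{\Pi_\Phi^m K}(\pmb{x})+(1-\eta)h_{\Pi_\Phi^m K}(\pmb{y})$. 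The degenerate cases where one of $\pmb{x},\pmb{y}$, or the combination is $o$ follow by homogeneity and a limiting argument, or are handled separately using $h_{\Pi_\Phi^m K}(o)=0$.

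It remains to show $0<h_{\Pi_\Phi^m K}(\pmb{x})<\infty$ for $\pmb{x}\in S^{nm-1}$, which by homogeneity gives both that $\Pi_\Phi^m K$ is bounded and that it contains a ball around $o$. The finiteness (hence boundedness of $\Pi_\Phi^m K$, equivalently $h_{\Pi_\Phi^m K}>0$ would be the wrong direction---let me be careful): finiteness of $h_{\Pi_\Phi^m K}(\pmb{x})$ is equivalent to $t_0(\pmb{x})>0$ in Proposition~\ref{Proposition-Hx}, which is already guaranteed there, so $\Pi_\Phi^m K$ is bounded. For $o\in\mathrm{int}\,\Pi_\Phi^m K$ I need a \emph{uniform} upper bound $h_{\Pi_\Phi^m K}(\pmb{x})\le C$ for all $\pmb{x}\in S^{nm-1}$, equivalently a uniform lower bound $t_0(\pmb{x})\ge 1/C>0$. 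This is where a little work is needed: using the lower bound $\min_{z\in S^{m-1}}\{\Phi(z)+\Phi(-z)\}>0$ from \eqref{mprime}, the positive lower and finite upper bounds on $h_K$, and the fact that $S_K$ is not concentrated on any hemisphere, one estimates $H_{\pmb{x}}(t)$ from below by a positive quantity uniform in $\pmb{x}\in S^{nm-1}$ once $t$ is bounded below; combined with convexity and $H_{\pmb{x}}(0)=0$ this yields $H_{\pmb{x}}(t)\to\infty$ uniformly, giving the desired uniform bound on $t_0$. The main obstacle is precisely this uniformity in $\pmb{x}$---pointwise the behavior is clear from Proposition~\ref{Proposition-Hx}, but ruling out $h_{\Pi_\Phi^m K}(\pmb{x})\to\infty$ along a sequence $\pmb{x}_j\in S^{nm-1}$ requires a compactness argument (pass to a convergent subsequence $\pmb{x}_j\to\pmb{x}_\infty\neq o$ and use lower semicontinuity of $H$ in $\pmb{x}$, e.g.\ via Fatou, exactly as in the proof of Proposition~\ref{Proposition-Hx}).
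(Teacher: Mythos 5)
Your homogeneity and convexity arguments are correct and follow the same route as the paper (the paper establishes subadditivity and combines it with homogeneity; your reparametrization $\lambda=\eta a/c$ is the same computation written directly as convexity). The final paragraph, however, contains a genuine confusion about which uniform bound is needed for which conclusion. You assert that $o\in\mathrm{int}\,\Pi_{\Phi}^{m}K$ requires a uniform \emph{upper} bound $h_{\Pi_{\Phi}^{m}K}\le C$ on $S^{nm-1}$ (equivalently $t_0\ge 1/C$). It is exactly the reverse: an upper bound on $h_{\Pi_{\Phi}^{m}K}$ makes $\Pi_{\Phi}^{m}K$ bounded, while $o\in\mathrm{int}\,\Pi_{\Phi}^{m}K$ requires a uniform \emph{lower} bound $h_{\Pi_{\Phi}^{m}K}\ge c>0$, i.e.\ a uniform \emph{upper} bound on $t_0(\pmb{x})$. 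The estimate you then sketch ($H_{\pmb{x}}(t)\to\infty$ uniformly in $\pmb{x}$) would indeed bound $t_0$ from above, which is what you actually need, so the computation points in the right direction despite its mislabeled goal; but your closing sentence switches back to ``ruling out $h_{\Pi_{\Phi}^{m}K}(\pmb{x}_j)\to\infty$,'' which is the boundedness question you had earlier declared already settled. The two halves of the paragraph address different statements and neither one is completed.

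The simpler observation, and the one the paper relies on, is that no separate uniformity argument is needed. Once you know $h_{\Pi_{\Phi}^{m}K}$ is convex, positively homogeneous of degree $1$, and finite-valued (Proposition~\ref{Proposition-Hx} gives $t_0(\pmb{x})\in(0,\infty)$ for every $\pmb{x}\neq o$, so $h_{\Pi_{\Phi}^{m}K}(\pmb{x})=1/t_0(\pmb{x})\in(0,\infty)$), the standard fact recalled after \eqref{h KB} in Section~\ref{notation} says it is the support function of a compact convex body; equivalently, a finite convex function on $\mathbb{R}^{nm}$ is automatically continuous, hence attains its maximum and minimum on the compact sphere $S^{nm-1}$. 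Pointwise positivity then forces the minimum to be positive, so $o\in\mathrm{int}\,\Pi_{\Phi}^{m}K$. Compactness of the sphere together with continuity of finite convex functions supplies the uniformity for free, which is why the paper's proof can stop at the pointwise statement.
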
 

\begin{proof} We first show that $h_{\Pi_{\Phi}^{m}K}(\bigcdot): M_{n,m}(\mathbb{R})\rightarrow [0, \infty)$ is the support function of $\Pi_{\Phi}^{m}K$.
It can be easily checked from \eqref{lemma equiv of pro body} that $$h_{\Pi_{\Phi}^{m}K}(a\pmb{x})=ah_{\Pi_{\Phi}^{m}K}(\pmb{x}) \ \ \mathrm{for\  all} \ \pmb{x}\in M_{n,m}(\mathbb{R}) \ \ \mathrm{and} \ \ a>0.$$ On the other hand, 
for any $\pmb{x}, \widetilde{\pmb{x}}\in M_{n,m}(\mathbb{R})\setminus \{o\}$,
let 
$$\lambda=h_{\Pi_{\Phi}^{m}K}(\pmb{x}) \ \ \mathrm{and} \ \  \widetilde{\lambda}=h_{\Pi_{\Phi}^{m}K}(\widetilde{\pmb{x}}). $$ 
It follows from  \eqref{lemma equiv of pro body} that   
$$H_{\pmb{x}}\Big(\frac{1}{\lambda}\Big)=nV_n(K)\ \ \mathrm{and}\ \ H_{\widetilde{\pmb{x}}}\Big(\frac{1}{\widetilde{\lambda}}\Big)=nV_n(K).$$
As  $\Phi$ is convex, one has  
\begin{align*}
\Phi\bigg(\frac{u^{\mathrm{T}}{}_{\bigcdot}\,  (\pmb{x}+\widetilde{\pmb{x}})}{(\lambda+\widetilde{\lambda}) h_K(u)}\bigg)
&\leq\frac{\lambda}{\lambda+\widetilde{\lambda}}\Phi\bigg(\frac{u^{\mathrm{T}}{}_{\bigcdot}\,  \pmb{x}}{\lambda h_K(u)}\bigg)+
\frac{\widetilde{\lambda}}{\lambda+\widetilde{\lambda}}\Phi\bigg(\frac{u^{\mathrm{T}}{}_{\bigcdot}\,  \widetilde{\pmb{x}}}{\widetilde{\lambda} h_K(u)}\bigg).
\end{align*}
Integrating both sides with respect to $h_KdS_K$, one gets
\begin{align*}
\int_{S^{n-1}}\Phi\bigg(\frac{u^{\mathrm{T}}{}_{\bigcdot}\,  (\pmb{x}+\widetilde{\pmb{x}})}{(\lambda+\widetilde{\lambda}) h_K(u)}\bigg)h_K(u)dS_K(u)\leq \frac{\lambda}{\lambda+\widetilde{\lambda}} H_{\pmb{x}}\Big(\frac{1}{\lambda}\Big)+\frac{\widetilde{\lambda}}{\lambda+\widetilde{\lambda}}H_{\widetilde{\pmb{x}}}\Big(\frac{1}{\widetilde{\lambda}}\Big)=nV_n(K).
\end{align*}
Definition \ref{supp-pro} then implies that $$h_{\Pi_{\Phi}^{m}K}(\pmb{x}+\widetilde{\pmb{x}})\leq \lambda+\widetilde{\lambda}=h_{\Pi_{\Phi}^{m}K}(\pmb{x})+h_{\Pi_{\Phi}^{m}K}(\widetilde{\pmb{x}}).$$ This concludes that $h_{\Pi_{\Phi}^{m}K}(\bigcdot): M_{n,m}(\mathbb{R})\rightarrow [0,\infty)$ is a convex function with positive homogeneity of degree $1$, and hence it uniquely determines a convex body $\Pi_{\Phi}^{m}K$. 

Moreover, by Definition \ref{supp-pro} and Proposition \ref{Proposition-Hx},   ${h_{\Pi_{\Phi}^{m}K}(\pmb{x})}>0$ for any $\pmb{x}\in M_{n,m}(\mathbb{R})\setminus \{o\}$. Thus, $\Pi_{\Phi}^{m}K$ is a convex body in $M_{n, m}(\mathbb{R})$ that contains the origin in its interior. 
\end{proof}

Let us pause here to mention that, for $Q\in\mathcal{K}_{o}^{1,m}$,  the support function $h_Q(\bigcdot)$ belongs to $\mathcal{C}$. In fact, $h_Q(o)=0$ holds for any $Q\in\mathcal{K}_{o}^{1,m}.$ On the other hand, \eqref{positive-z} clearly holds if $o$ is in the interior of $Q$. When $o\in \partial{Q}$, there exists a support hyperplane, say $\mathbf{H}$, passing through $o$, and then $Q$ is contained in a closed half space given by such a hyperplane, say $\mathbf{H}^{-}$. It is clear that if $z\in \mathbf{H}^{-}\setminus\{o\},$ then $h_{Q}(z)>0$, which further leads to \eqref{positive-z} in the case when $o\in \partial Q.$ Similarly, it can also be proved that $h_Q^p(\bigcdot)\in \mathcal{C}$ for $Q\in  \mathcal{K}_{o}^{1,m}$ and $p\geq 1.$ In a more general setting, for $Q\in\mathcal{K}_{o}^{1,m}$, 
 one can consider the convex function 
$\Phi_Q=\phi\circ h_Q: M_{1, m}(\mathbb{R})\rightarrow [0, \infty)$, where $\phi:[0, \infty)\rightarrow [0,\infty)$ is a convex function such that $\phi(0)=0$ and $\phi$ is strictly increasing on $[0,\infty)$. As proved above, one can easily see $\Phi_Q \in \mathcal{C}.$ This gives a special case of $\Pi_{\Phi}^mK$ for  $K\in\mathcal{K}_{(o)}^{n,1}$, denoted by $\Pi_{\phi, Q}^{m}K$, with its support function given by: for $\pmb{x}\in M_{n, m}(\R)$, 
\begin{align}\label{specical-case-1}
h_{\Pi_{\phi, Q}^{m}K}(\pmb{x})=\inf\left\{\frac{1}{t}>0: \int_{S^{n-1}} \Phi_Q \bigg(t \frac{{u}^{\mathrm{T}}{}_{\bigcdot}\,  \pmb{x}}{h_K({u})}\bigg)h_K({u})dS_K({u})\leq nV_n(K)\right\}.
\end{align} 
By letting $\phi(t)=t^p$, $p>1$, one gets 
$
\Pi_{\phi,Q}^mK=\frac{\Pi_{p,Q}K}{(nV_n(K))^{\frac{1}{p}}},
$
where $\Pi_{p,Q}K$ is the 
 $(L_p,Q)$-projection body \cite[Definition 1.2]{Haddad-ye-lp-2025}. Moreover, by letting $\phi(t)=t$ and $Q=\Delta_m$, one gets 
 $
\Pi_{\phi,Q}^mK=\frac{\Pi^mK}{nV_n(K)},
 $
 where  $\Pi^mK$ is the $m$th higher-order projection body \cite[Definition 1.2]{Haddad-Ye-2023}. Meanwhile, when $m=1$, $\Pi^m_{\Phi}K$ reduces to the Orlicz projection body \cite[P.228]{orlicz projection}, and reduces, up to a constant, to the asymmetric $L_p$ projection bodies (see, e.g., \cite[P.8]{general lp affine iso ine}, \cite[P.16]{ludwig 2005}), the (symmetric) $L_p$ projection body \cite[P.116]{lp proj ineq 1} and the complex projection body \cite{complex projection body}.

\section{Continuity and affine invariance of the $m$th order Orlicz projection body} \label{section-continuity}
In this section, we will prove the continuity and affine invariance for the $m$th order Orlicz projection body. These properties are essential for the proof of the inequalities related to the $m$th order Orlicz  projection body in Section \ref{sec-inequality}.
 
Recall that for $K_i, K\in\mathcal{K}_{(o)}^{n,1}$, $K_i\rightarrow K$ in the Hausdorff metric if $d_H(K_i, K)\rightarrow 0$, where $d_H(\bigcdot, \bigcdot)$ is defined in \eqref{metric-Hsdf}. We are in the position to show our first continuity result. 
\begin{proposition}\label{continuity} 
 Let $\Phi\in\mathcal{C}$ and $K_0\in\mathcal{K}_{(o)}^{n,1}$.  
Suppose that $\{K_j\}_{j\in\mathbb{N}}\subset\mathcal{K}_{(o)}^{n,1}$ is a sequence of convex bodies satisfying $K_j\rightarrow K_0$ in the Hausdorff metric. Then,  $$ \Pi_{\Phi}^{m}K_j \rightarrow  \Pi_{\Phi}^{m}K_0 \ \ \mathrm{as} \ \  j\rightarrow \infty$$ 
 in the Hausdorff metric.  
\end{proposition}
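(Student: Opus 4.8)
The plan is to prove pointwise convergence of the support functions, $h_{\Pi_{\Phi}^{m}K_j}(\pmb x)\to h_{\Pi_{\Phi}^{m}K_0}(\pmb x)$ for every $\pmb x\in M_{n,m}(\mathbb R)\setminus\{o\}$, and then to invoke the standard fact that, for convex bodies, pointwise convergence of support functions automatically upgrades to uniform convergence on $S^{nm-1}$, which by \eqref{metric-Hsdf} is exactly Hausdorff convergence (see \cite{schneider}). Fix $\pmb x\ne o$ and, for a body $K$, write $H^{K}_{\pmb x}$ for the function $H_{\pmb x}$ of \eqref{H-x} built from $K$. Put $t_j=1/h_{\Pi_{\Phi}^{m}K_j}(\pmb x)$, so that by \eqref{lemma equiv of pro body} and Proposition \ref{Proposition-Hx} the number $t_j$ is the unique positive solution of $H^{K_j}_{\pmb x}(t_j)=nV_n(K_j)$, and likewise $t_0$ for $K_0$; it suffices to show $t_j\to t_0$.

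First I would record the consequences of $K_j\to K_0$ with $K_0\in\mathcal K_{(o)}^{n,1}$: for all large $j$ there are fixed constants $0<r\le R<\infty$ with $rB_2^n\subseteq K_j\subseteq RB_2^n$, so $r\le h_{K_j}\le R$ on $S^{n-1}$ and $S_{K_j}(S^{n-1})$ is uniformly bounded; moreover $h_{K_j}\to h_{K_0}$ uniformly on $S^{n-1}$, $V_n(K_j)\to V_n(K_0)>0$, and $S_{K_j}\to S_{K_0}$ weakly (\cite{schneider}). Next I would show that $\{t_j\}$ stays in a compact subset of $(0,\infty)$. For the lower bound: if $t_{j_k}\to0$, then since $|t_{j_k}u^{\mathrm T}{}_{\bigcdot}\,\pmb x/h_{K_{j_k}}(u)|\le t_{j_k}|\pmb x|/r\to0$ uniformly in $u$, continuity of $\Phi$ at $o$ gives $\sup_{u\in S^{n-1}}\Phi\big(t_{j_k}u^{\mathrm T}{}_{\bigcdot}\,\pmb x/h_{K_{j_k}}(u)\big)\to0$, so $H^{K_{j_k}}_{\pmb x}(t_{j_k})\to0$, contradicting $H^{K_{j_k}}_{\pmb x}(t_{j_k})=nV_n(K_{j_k})\to nV_n(K_0)>0$. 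For the upper bound: for $T\ge R$ one has $T/h_{K_j}(u)\ge1$, so the superadditivity estimate $\Phi(az)\ge a\Phi(z)$ for $a\ge1$ (a restatement of \eqref{phi-rz-ge-r-phi-z}) yields $\Phi\big(T u^{\mathrm T}{}_{\bigcdot}\,\pmb x/h_{K_j}(u)\big)h_{K_j}(u)\ge T\,\Phi(u^{\mathrm T}{}_{\bigcdot}\,\pmb x)$, whence $H^{K_j}_{\pmb x}(T)\ge T c_j$ with $c_j=\int_{S^{n-1}}\Phi(u^{\mathrm T}{}_{\bigcdot}\,\pmb x)\,dS_{K_j}(u)$. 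By weak convergence $c_j\to c_0=\int_{S^{n-1}}\Phi(u^{\mathrm T}{}_{\bigcdot}\,\pmb x)\,dS_{K_0}(u)$, and $c_0>0$ because, exactly as in the proof of Proposition \ref{Proposition-Hx}, the set $\{u\in S^{n-1}:\Phi(u^{\mathrm T}{}_{\bigcdot}\,\pmb x)=0\}$ lies in a closed hemisphere while $S_{K_0}$ is not concentrated on any closed hemisphere. Choosing $T\ge R$ with $Tc_0>4nV_n(K_0)$ then forces $H^{K_j}_{\pmb x}(T)>nV_n(K_j)$ for all large $j$, hence $t_j<T$ by the monotonicity of $H^{K_j}_{\pmb x}$ from Proposition \ref{Proposition-Hx}.

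Finally I would pass to the limit along subsequences. Let $t_{j_k}\to t^*\in(0,\infty)$. From $t_{j_k}\to t^*$, $h_{K_{j_k}}\to h_{K_0}$ uniformly with $h_{K_{j_k}}\ge r$, and uniform continuity of $\Phi$ on compact sets, the integrands $u\mapsto\Phi\big(t_{j_k}u^{\mathrm T}{}_{\bigcdot}\,\pmb x/h_{K_{j_k}}(u)\big)h_{K_{j_k}}(u)$ converge uniformly on $S^{n-1}$ to $u\mapsto\Phi\big(t^*u^{\mathrm T}{}_{\bigcdot}\,\pmb x/h_{K_0}(u)\big)h_{K_0}(u)$; together with the weak convergence $S_{K_{j_k}}\to S_{K_0}$ and the uniform bound on $S_{K_{j_k}}(S^{n-1})$, this gives $H^{K_{j_k}}_{\pmb x}(t_{j_k})\to H^{K_0}_{\pmb x}(t^*)$. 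Since the left-hand side equals $nV_n(K_{j_k})\to nV_n(K_0)$, we get $H^{K_0}_{\pmb x}(t^*)=nV_n(K_0)$, so $t^*=t_0$ by the uniqueness in Proposition \ref{Proposition-Hx}. As every subsequential limit of the bounded sequence $\{t_j\}$ equals $t_0$, we conclude $t_j\to t_0$, i.e.\ $h_{\Pi_{\Phi}^{m}K_j}(\pmb x)\to h_{\Pi_{\Phi}^{m}K_0}(\pmb x)$, which by the reduction above finishes the proof. I expect the main obstacle to be the upper bound on $t_j$: one must prevent the mass of $h_{K_j}\,dS_{K_j}$ from escaping into the region where $\Phi(u^{\mathrm T}{}_{\bigcdot}\,\pmb x)$ degenerates to $0$; for $m\ge2$ this zero set need not be a great subsphere but can be a genuinely higher-dimensional convex piece of $S^{n-1}$, so the containment-in-a-hemisphere argument of Proposition \ref{Proposition-Hx} (rather than a codimension-one computation, as suffices when $m=1$ or for the $(L_p,Q)$-bodies) has to be combined with the weak convergence of $S_{K_j}$ through the auxiliary quantities $c_j$.
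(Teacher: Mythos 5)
Your proof is correct, and while it follows the paper's overall skeleton (show $h_{\Pi_\Phi^m K_j}(\pmb x)$ stays in a compact subset of $(0,\infty)$, extract a convergent subsequence, pass to the limit via weak convergence of $S_{K_j}$, and invoke the uniqueness in Proposition~\ref{Proposition-Hx} together with \cite[Theorem 1.8.15]{schneider}), the crucial step diverges in an interesting way. To rule out $\lambda_j(\pmb\theta)\to 0$ (your upper bound on $t_j$), the paper picks an index $k_0$ with $\theta_{k_0}\neq o$, splits $S^{n-1}$ into $\Sigma_{k_0}^{\pm}$, applies Jensen's inequality to push the integral below a sum $\Phi(z^+_{\pmb\theta,j}/(\cdot))+\Phi(-z^+_{\pmb\theta,j}/(\cdot))$, and then uses \eqref{mprime} and \eqref{phi-rz-ge-r-phi-z} to let this blow up. You instead apply the superadditivity $\Phi(rz)\ge r\Phi(z)$ ($r\ge1$) pointwise under the integral to obtain the linear lower bound $H^{K_j}_{\pmb x}(T)\ge T\,c_j$ with $c_j=\int_{S^{n-1}}\Phi(u^{\mathrm T}{}_{\bigcdot}\,\pmb x)\,dS_{K_j}(u)$, and then control $c_j\to c_0>0$ by weak convergence of $S_{K_j}$ and the hemisphere-containment of $\{\Phi(u^{\mathrm T}{}_{\bigcdot}\,\pmb x)=0\}$ (a subset of the set $M_3$ from Proposition~\ref{Proposition-Hx}). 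This is shorter, avoids the Jensen step and the centroid identity for $S_{K_j}$, and makes the $m\ge2$ subtlety --- that the degeneracy set need not be a great subsphere --- visible in a single place. One cosmetic note: strictly speaking, the proof of Proposition~\ref{Proposition-Hx} shows that $M_3\supseteq\{u:\Phi(u^{\mathrm T}{}_{\bigcdot}\,\pmb x)=0\}$ lies in a closed hemisphere; your smaller set inherits that containment, so the reference is fine, but it is worth spelling out the inclusion.
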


 \begin{proof}To prove $\Pi_{\Phi}^{m}K_j \rightarrow  \Pi_{\Phi}^{m}K_0$ as $j\rightarrow \infty$, it is equivalent to prove that 
$$h_{\Pi_{\Phi}^{m}K_j}\rightarrow h_{\Pi_{\Phi}^{m}K_0} \ \ \mathrm{uniformly\ \ on}\ \ S^{nm-1}.$$   For $\pmb{\theta}=(\theta_i)_i\in S^{nm-1}$ and $j\in\mathbb{N}_0$, it is convenient to let   $$\lambda_j(\pmb{\theta})=h_{\Pi_{\Phi}^{m}{K_j}}(\pmb{\theta}).$$

We first show that, for each $\pmb{\theta}\in S^{nm-1}$,  the sequence $\{\lambda_j(\pmb{\theta})\}_{j\in\mathbb{N}}$ has a finite upper bound.  Assume to the contrary that $\lim_{j\rightarrow \infty}\lambda_j(\pmb{\theta})=\infty$.   For $j\in\mathbb{N}_0,$ let 
\begin{align*}
y_{j,\pmb{\theta}}(u)= \frac{u^{\mathrm{T}}{}_{\bigcdot}\,  \pmb{\theta}}{\lambda_j(\pmb{\theta}) h_{K_j}(u)} \ \ \mathrm{for}\ \ u\in S^{n-1}.
\end{align*} 
Since $\{K_j\}_{j\in \mathbb{N}_0}\subset \mathcal{K}_{(o)}^{n,1}$ and $K_j\rightarrow K_0$ as $j\rightarrow \infty$, we have $h_{K_j}\rightarrow h_{K_0}$ uniformly on $S^{n-1}$. Moreover, there exist constants $c_1,c_2>0$ such that  $c_1\leq h_{K_j}(u)\leq c_2$  for all $j\in\mathbb{N}_0$ and $u\in S^{n-1}$. It can be easily checked that $|u^{\mathrm{T}}{}_{\bigcdot}\, \pmb{\theta}|\leq 1$ holds for $u\in S^{n-1}$ and $\pmb{\theta}\in S^{nm-1}$, and hence $u^{\mathrm{T}}{}_{\bigcdot}\, \pmb{\theta}\in B_2^m$. It is easily checked that $y_{j, \pmb{\theta}}(\bigcdot) \rightarrow o$  uniformly on $S^{n-1},$ namely,  $|y_{j, \pmb{\theta}}(\bigcdot )|\rightarrow 0$ uniformly on $S^{n-1}.$ 
As $\lambda_j(\pmb{\theta})\rightarrow \infty$, there exists $j_0\in \mathbb{N}$ such that $\lambda_j(\pmb{\theta})>1$ for all $j\geq j_0$, and hence $|y_{j, \pmb{\theta}}(u)|\leq \frac{1}{c_1}$ holds for all $j\geq j_0$ and $u\in \sphere$. Consequently, $\{y_{j, \pmb{\theta}}(u): j\in \mathbb{N}\ \mathrm{and}\ u\in\sphere\}$ is contained in a compact set, say $\Omega$, and in particular, $\Phi$ is uniformly continuous on $\Omega$. Thus, for any  $\epsilon>0$, there exists $\delta(\epsilon)>0$, such that, for all $y_1, y_2\in \Omega$ satisfying $|y_1-y_2|<\delta(\epsilon)$, one has  $|\Phi(y_1)-\Phi(y_2)|<\epsilon$.   The uniform convergence of  $y_{j, \pmb{\theta}}(\bigcdot)\rightarrow o$ on $S^{n-1}$ yields the existence of $j_1(\epsilon)\in \mathbb{N}$, such that, for all $j>j_1(\epsilon)$, the following holds: $$|y_{j, \pmb{\theta}}(u)|\leq \delta(\epsilon)  \ \  \mathrm{for\ any} \ \ u\in S^{n-1}. $$ In summary, we have shown that, for any   $\epsilon>0$,  there exists $j_1(\epsilon)\in \mathbb{N}$, such that, for all $j>j_1(\epsilon)$,  $$|\Phi(y_{j, \pmb{\theta}}(u)) |<\epsilon, \ \ \mathrm{for\ any}\ \ u\in S^{n-1}.$$ Thus, $\Phi(y_{j, \pmb{\theta}}(\bigcdot))\rightarrow 0$ uniformly on $S^{n-1}$. Together with \eqref{lemma equiv of pro body} and e.g., \cite[(2.5)]{zbc-hh-dpy-2018}, one gets,   
\begin{align}
1
&=\lim_{j\rightarrow\infty}\frac{1}{nV_n(K_j)}\int_{S^{n-1}}\Phi\bigg(\frac{u^{\mathrm{T}}{}_{\bigcdot}\,  \pmb{\theta}}{\lambda_j(\pmb{\theta}) h_{K_j}(u)}\bigg)h_{K_j}(u)dS_{K_j}(u) \nonumber \\
&=\frac{1}{nV_n(K)}\int_{S^{n-1}}\lim_{j\rightarrow\infty}\Phi\big(y_{j, \pmb{\theta}}(u)\big)h_{K_j}(u)dS_{K_0}(u)=0. \label{unif-conv-Phi-1}
\end{align} This is a contradiction, and therefore  
$\{\lambda_j(\pmb{\theta})\}_{j\in\mathbb{N}}$ has a finite upper bound for any given $\pmb{\theta}\in S^{nm-1}$.

We now show that, for each $\pmb{\theta}\in S^{nm-1}$, $\{\lambda_j(\pmb{\theta}) \}_{j\in\mathbb{N}}$  has a positive lower bound by an argument of contradiction. To this end, assume that there exists a convergent subsequence, which is again denoted by $\{\lambda_j(\pmb{\theta}) \}_{j\in\mathbb{N}}$, such that $\lim_{j\rightarrow \infty}\lambda_j(\pmb{\theta}) =0$. As  $\pmb{\theta}\in S^{nm-1}$, there  exists  a $k_0\in\{1,\cdots,m\}$, such that $\theta_{k_0}\ne o.$ Let 
\begin{align*}
\Sigma_{k_0}^+=\{u\in S^{n-1}: u\bigcdot \theta_{k_0}>0\}\ \ \mathrm{and} \ \ \Sigma_{k_0}^{-} =\{u\in S^{n-1}: u\bigcdot \theta_{k_0}\leq 0\}.\end{align*}
Let $z_{\pmb{\theta}, 0}^+=(z^{1, +}_{\pmb{\theta}, 0},\cdots,z^{m, +}_{\pmb{\theta}, 0})$ and $z_{\pmb{\theta}, 0}^{-}=(z^{1, -}_{\pmb{\theta}, 0},\cdots,z^{m, -}_{\pmb{\theta}, 0})$, where for each $i\in \{1, \cdots, m\},$
\begin{align*}
z^{i, +}_{\pmb{\theta}, 0}=\int_{\Sigma_{k_0}^+} u\bigcdot \theta_i dS_{K_0}(u)\ \ \mathrm{and} \ \ z^{i, -}_{\pmb{\theta}, 0}=\int_{\Sigma_{k_0}^{-}} u\bigcdot \theta_i dS_{K_0}(u).    
\end{align*} As $S_{K_0}$ is not concentrated on any closed hemisphere, \begin{align*}
z^{k_0, +}_{\pmb{\theta}, 0}&=\int_{\Sigma_{k_0}^+} u\bigcdot \theta_{k_0} dS_{K_0}(u)>0\ \ \mathrm{and} \ \ z^{k_0, -}_{\pmb{\theta}, 0}=\int_{\Sigma_{k_0}^{-}} u\bigcdot \theta_{k_0} dS_{K_0}(u)<0,
\end{align*} 
which yields that $z_{\pmb{\theta}, 0}^{+}\ne o$ and $z_{\pmb{\theta}, 0}^{-}\ne o$.
Note that $S_{K_0}$ has its centroid at the origin, namely, $  \int_{S^{n-1}} u  dS_{K_0}(u)=o$ (see, e.g., \cite[(5.30)]{schneider}).  Thus, for any $i\in \{1, \cdots, m\},$ $$ z^{i, -}_{\pmb{\theta}, 0}+z^{i, +}_{\pmb{\theta}, 0}=\int_{\Sigma_{k_0}^{-}} u\bigcdot \theta_i dS_{K_0}(u) + \int_{\Sigma_{k_0}^{+}} u\bigcdot \theta_i dS_{K_0}(u)=\bigg(
\int_{S^{n-1}} u dS_{K_0}(u)\bigg) \bigcdot \theta_i=0.$$ This shows that $z^{i, -}_{\pmb{\theta}, 0}=-z^{i, +}_{\pmb{\theta}, 0}$ for $i\in \{1, \cdots, m\},$ and hence 
$z_{\pmb{\theta}, 0}^{-}=-z_{\pmb{\theta}, 0}^+.$  Similarly, for each $j\in \mathbb{N}$, let $z^{+} _{\pmb{\theta}, j}=(z^{1, +} _{\pmb{\theta}, j}, \cdots, z^{m, +} _{\pmb{\theta}, j})$ and $z^{-} _{\pmb{\theta}, j}=(z^{1, -} _{\pmb{\theta}, j}, \cdots, z^{m, -} _{\pmb{\theta}, j})$, where for each $i\in \{1, \cdots, m\}$,  \begin{align*}
z^{i, +} _{\pmb{\theta}, j} = \int_{\Sigma_{k_0}^+} u\bigcdot \theta_i dS_{K_j}(u)\ \ \ \mathrm{and} \ \ \ z^{i, -} _{\pmb{\theta}, j}  = \int_{\Sigma_{k_0}^{-}} u\bigcdot \theta_i dS_{K_j}(u).    
\end{align*}  Again, for all $i, j$,  we can prove that $ z^{i, -} _{\pmb{\theta}, j} =-z^{i, +} _{\pmb{\theta}, j}$, and then $z^{-} _{\pmb{\theta}, j}=-z^{+} _{\pmb{\theta}, j}.$   Also, for all $j\in \mathbb{N}$, $z_{\pmb{\theta}, j}^+\neq o$ and $z_{\pmb{\theta}, j}^{-}\neq o$, as $S_{K_j}$ is not concentrated on any closed hemisphere.  

It follows from \eqref{phi-rz-ge-r-phi-z} and Jensen's inequality (see, e.g., \cite[Theorem 3.9.3]{Niculescu-Persson-convex-functions-and-their-applications}) that
\begin{align}\label{Phi-uxk}
\nonumber 1 &=\frac{1}{nV_n(K_j)} \int_{S^{n-1}}\Phi\bigg(\frac{u^{\mathrm{T}}{}_{\bigcdot}\,  \pmb{\theta}}{\lambda_j(\pmb{\theta}) h_{K_j}(u)}\bigg)h_{K_j}(u)dS_{K_j}(u)\\ 
&= \frac{1}{nV_n(K_j)} \int_{\Sigma_{k_0}^+}\Phi\bigg(\frac{u\bigcdot \theta_1}{\lambda_j(\pmb{\theta}) h_{K_j}(u)},\cdots,  \frac{u\bigcdot \theta_m}{\lambda_j(\pmb{\theta}) h_{K_j}(u)} \bigg)h_{K_j}(u) {d}S_{K_j}(u) \nonumber  \\ &\ \ \ \ +\frac{1}{nV_n(K_j)} \int_{\Sigma_{k_0}^{-}}\Phi\bigg(\frac{u\bigcdot \theta_1}{\lambda_j(\pmb{\theta}) h_{K_j}(u)},\cdots,  \frac{u\bigcdot \theta_m}{\lambda_j(\pmb{\theta}) h_{K_j}(u)} \bigg)h_{K_j}(u) {d}S_{K_j}(u) \nonumber  \\  &\ge\Phi\bigg( \frac{1}{nV_n(K_j)} \int_{\Sigma_{k_0}^+} \frac{u\bigcdot \theta_1}{\lambda_j(\pmb{\theta})} {d}S_{K_j}(u),\cdots,  \frac{1}{nV_n(K_j)} \int_{\Sigma_{k_0}^+}\frac{u\bigcdot \theta_m}{\lambda_j(\pmb{\theta})} {d}S_{K_j}(u)\bigg) \nonumber \\ &\ \ \ \ + \Phi\bigg( \frac{1}{nV_n(K_j)}\int_{\Sigma_{k_0}^{-}} \frac{u\bigcdot \theta_1}{\lambda_j(\pmb{\theta})}  {d}S_{K_j}(u),\cdots,  \frac{1}{nV_n(K_j)} \int_{\Sigma_{k_0}^{-}}\frac{u\bigcdot \theta_m}{\lambda_j(\pmb{\theta}) }  {d}S_{K_j}(u)\bigg) \nonumber \\ &=\Phi\bigg(\frac{z_{\pmb{\theta}, j}^+}{nV_n(K_j) \lambda_j(\pmb{\theta})} \bigg)+\Phi\bigg(\frac{- z_{\pmb{\theta}, j}^{+}}{nV_n(K_j) \lambda_j(\pmb{\theta})} \bigg). \end{align}  On the other hand, the function $$g_0(u)=\frac{|u\bigcdot \theta_{k_0}|+u\bigcdot \theta_{k_0}}{2}\ \ \ \mathrm{for} \ \  u\in S^{n-1}$$  is continuous on  $S^{n-1}$, such that, $g_0(u)=u\bigcdot \theta_{k_0}$ if $u\in \Sigma_{k_0}^+$, and $g_0(u)=0$ if $u\in \Sigma_{k_0}^{-}.$
Thus 
\begin{align*}
z_{\pmb{\theta}, j}^{k_0,+}=\int_{\Sigma_{k_0}^+}g_0(u) \,d S_{K_j}(u)
= \int_{S^{n-1}} g_0(u) \,d S_{K_j}(u) \ \ \mathrm{for \,\, each}\, \, j\in \mathbb{N}_0.
\end{align*} As $S_{K_j}\rightarrow S_{K_0}$ weakly on $S^{n-1},$ one gets, 
 \begin{align}
\lim_{j\rightarrow\infty} z_{\pmb{\theta}, j}^{k_0,+} 
=\lim_{j\rightarrow\infty}  \int_{S^{n-1}} g_0(u) \,d S_{K_j}(u)=\int_{S^{n-1}} g_0(u) \, d S_{K_0}(u)=z_{\pmb{\theta}, 0}^{k_0,+}>0. \label{inequality>0}
\end{align} By \eqref{inequality>0}, without loss of generality, one can get, for each $j\in \mathbb{N}$,  \begin{align}
    |z_{\pmb{\theta}, j}^+|\geq z_{\pmb{\theta}, j}^{k_0,+} \ge \frac{z_{\pmb{\theta},0}^{k_0, +}}{2}>0. \label{lower bound z+}
\end{align} 
Recall that, for each fixed $\pmb{\theta}\in S^{mn-1}$, $\lim_{j\rightarrow \infty} \lambda_j(\pmb{\theta})=0$. Together with \eqref{lower bound z+} and $\lim_{j\rightarrow \infty} V_n(K_j)=V_n(K)>0$, one gets \begin{align*}
 \lim_{j\rightarrow \infty } \frac{|z_{\pmb{\theta}, j}^{+}|}{nV_n(K_j)\lambda_j(\pmb{\theta})}  =\infty.   
\end{align*} 
By \eqref{mprime}, \eqref{phi-rz-ge-r-phi-z} and \eqref{Phi-uxk}, one has
\begin{align*}
1
&
\ge\lim_{j\rightarrow\infty}\bigg[\Phi\bigg(\frac{z_{\pmb{\theta}, j}^+}{nV_n(K_j)\lambda_j(\pmb{\theta})} \bigg)+\Phi\bigg(\frac{z_{\pmb{\theta}, j}^{-}}{nV_n(K_j)\lambda_j(\pmb{\theta})} \bigg)\bigg]\\
&\ge \lim_{j\rightarrow\infty}\bigg\{\frac{|z_{\pmb{\theta}, j}^{+}|}{nV_n(K_j)\lambda_j(\pmb{\theta})}  \bigg[\Phi\bigg(\frac{z_{\pmb{\theta}, j}^+ }{|z_{\pmb{\theta}, j}^+|}\bigg)+ 
 \Phi\bigg(\frac{-z_{\pmb{\theta}, j}^{+}}{|z_{\pmb{\theta}, j}^+|}\bigg)\bigg] \bigg\} \\
 &\ge \min \left\{\Phi(z)+\Phi(-z):z\in S^{m-1}\right\} %\bigg[\Phi\bigg(\frac{z_{\pmb{\theta}, j}^+ }{|z_{\pmb{\theta}, j}^+|}\bigg)+ 
 %\Phi\bigg(\frac{-z_{\pmb{\theta}, j}^{+}}{|z_{\pmb{\theta}, j}^+|}\bigg)\bigg] 
 \times \lim_{j\rightarrow \infty } \frac{|z_{\pmb{\theta}, j}^{+}|}{nV_n(K_j)\lambda_j(\pmb{\theta})}  =\infty.
\end{align*}
This is a contradiction. Thus, for each $\pmb{\theta}\in S^{nm-1}$, $\{\lambda_j(\pmb{\theta}) \}_{j\in\mathbb{N}}$  has a positive lower bound. 

Let $\{\lambda_{j_k}(\pmb{\theta})\}_{k\in\mathbb{N}}$ be an arbitrary subsequence of $\{\lambda_j(\pmb{\theta})\}_{j\in\mathbb{N}}$. As $\{\lambda_{j_k}(\pmb{\theta})\}_{k\in\mathbb{N}}$  is a bounded sequence,  one can find a convergent subsequence of $\{\lambda_{j_k}(\pmb{\theta})\}_{k\in\mathbb{N}}$, say $\{\lambda_{j_{k_l}}(\pmb{\theta})\}_{l\in\mathbb{N}}$, such that \begin{align} 
\lim_{l\rightarrow \infty}\lambda_{j_{k_l}}(\pmb{\theta}) =\widetilde{\lambda}(\pmb{\theta})\in (0, \infty). \label{limit-lambda-1-1} \end{align} 
From \eqref{lemma equiv of pro body}, for any
$\pmb{\theta}\in S^{nm-1}$, one has, 
\begin{align*}
nV_n(K_0)&=\lim_{l\rightarrow \infty}nV_n(K_{j_{k_l}})\\ &
\nonumber=\lim_{l\rightarrow \infty} \int_{S^{n-1}}\Phi\bigg(\frac{u^{\mathrm{T}}{}_{\bigcdot}\,  \pmb{\theta}}{\lambda_{j_{k_l}}(\pmb{\theta})h_{K_{j_{k_l}}}(u)}\bigg)h_{K_{j_{k_l}}}(u)dS_{K_{j_{k_l}}}(u)\\ &
=\int_{S^{n-1}}\lim_{l\rightarrow \infty}  \Phi\bigg(\frac{u^{\mathrm{T}}{}_{\bigcdot}\,  \pmb{\theta}}{\lambda_{j_{k_l}}(\pmb{\theta})h_{K_{j_{k_l}}}(u)}\bigg)h_{K_{j_{k_l}}}(u)dS_{K_{0}}(u)\\ &
=\int_{S^{n-1}}   \Phi\bigg(\frac{u^{\mathrm{T}}{}_{\bigcdot}\,  \pmb{\theta}}{\widetilde{\lambda}(\pmb{\theta})h_{K_{0}}(u)}\bigg)h_{K_{0}}(u)dS_{K_{0}}(u),
\end{align*} 
where the third equality follows from the exactly same reasons as in \eqref{unif-conv-Phi-1}, and the last equality follows from \eqref{limit-lambda-1-1} and the uniform convergence of $h_{K_j}\rightarrow h_{K_0}>0$ on $S^{n-1}$. 

In conclusion, we have shown that  \begin{align*}  nV_n(K_0) 
=\int_{S^{n-1}}   \Phi\bigg(\frac{u^{\mathrm{T}}{}_{\bigcdot}\,  \pmb{\theta}}{\widetilde{\lambda}(\pmb{\theta})h_{K_{0}}(u)}\bigg)h_{K_{0}}(u)dS_{K_{0}}(u),
\end{align*} 
and hence $\widetilde{\lambda}(\pmb{\theta})=h_{\Pi_{\Phi}^{m}K_0}(\pmb{\theta})$. This further shows that, for each $\pmb{\theta}\in S^{nm-1}$, $$\lim_{j\rightarrow\infty}\lambda_{j}(\pmb{\theta})=h_{\Pi_{\Phi}^{m}K_0}(\pmb{\theta}),$$ due to the arbitrariness of $\{\lambda_{j_k}(\pmb{\theta})\}_{k\in \mathbb{N}}.$ It is well-known  (see \cite[Theorem 1.8.15]{schneider}) that, for support functions on $S^{nm-1}$, the pointwise convergence is equivalent to the uniform convergence. Thus, we conclude that $h_{\Pi_{\Phi}^{m}K_{j}}\rightarrow h_{\Pi_{\Phi}^{m}K_0}$ uniformly on $S^{nm-1}$, and then $\Pi_{\Phi}^{m}K_{j}\rightarrow \Pi_{\Phi}^{m}K_{0}$ as desired. 
\end{proof}
 
The sequence $\{\Phi_j\}_{j\in\mathbb{N}}\subset\mathcal{C}$ is said to be convergent to $\Phi\in\mathcal{C}$, if \begin{align}
    \sup_{z\in G}|\Phi_j(z)-\Phi(z)|\rightarrow 0 \label{def-func-conv-1}
\end{align} for every nonempty compact set $G\subset M_{1,m}(\mathbb{R})$. We have the following continuity result. 
\begin{proposition}\label{continuity-Phi-i} 
 Let $\Phi_0\in\mathcal{C}$ and $K_0\in\mathcal{K}_{(o)}^{n,1}$. Suppose that  $\{\Phi_j\}_{j\in\mathbb{N}}\subset\mathcal{C}$ is a sequence such that $\Phi_j\rightarrow \Phi_0$. Then,   $$\Pi_{\Phi_j}^m{K_0}\rightarrow  \Pi_{\Phi_0}^{m}{K_0} \ \ \mathrm{as} \ \  j\rightarrow \infty $$  in the Hausdorff metric. 
\end{proposition}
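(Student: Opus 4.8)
The plan is to mimic the structure of the proof of Proposition \ref{continuity}, replacing the varying bodies $K_j$ with the varying functions $\Phi_j$ while keeping $K_0$ fixed. Write $\lambda_j(\pmb{\theta}) = h_{\Pi_{\Phi_j}^m K_0}(\pmb{\theta})$ for $\pmb{\theta}\in S^{nm-1}$; by the characterization \eqref{lemma equiv of pro body} this is the unique positive number with
\[
\int_{S^{n-1}}\Phi_j\bigg(\frac{u^{\mathrm{T}}{}_{\bigcdot}\,\pmb{\theta}}{\lambda_j(\pmb{\theta})h_{K_0}(u)}\bigg)h_{K_0}(u)\,dS_{K_0}(u)=nV_n(K_0).
\]
Since support functions on $S^{nm-1}$ converge uniformly iff they converge pointwise (\cite[Theorem 1.8.15]{schneider}), it suffices to show $\lambda_j(\pmb{\theta})\to\lambda_0(\pmb{\theta}):=h_{\Pi_{\Phi_0}^m K_0}(\pmb{\theta})$ for each fixed $\pmb{\theta}$, which via a subsequence argument reduces to: every subsequence has a further subsequence converging to $\lambda_0(\pmb{\theta})$. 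For that I need (i) a uniform upper bound and (ii) a uniform positive lower bound on $\{\lambda_j(\pmb{\theta})\}$, and then (iii) a passage to the limit identifying the limit of any convergent subsequence as $\lambda_0(\pmb{\theta})$.

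For the upper bound, suppose $\lambda_{j}(\pmb{\theta})\to\infty$ along a subsequence. Then $y_{j,\pmb{\theta}}(u)=\frac{u^{\mathrm{T}}{}_{\bigcdot}\,\pmb{\theta}}{\lambda_j(\pmb{\theta})h_{K_0}(u)}\to o$ uniformly on $S^{n-1}$ (using $c_1\le h_{K_0}\le c_2$ and $|u^{\mathrm{T}}{}_{\bigcdot}\,\pmb{\theta}|\le 1$), so the arguments lie in a fixed compact set $\Omega$ for $j$ large. On $\Omega$, the convergence $\Phi_j\to\Phi_0$ is uniform and $\Phi_0$ is (uniformly) continuous with $\Phi_0(o)=0$, so $\Phi_j(y_{j,\pmb{\theta}}(u))\to 0$ uniformly on $S^{n-1}$; integrating against $h_{K_0}\,dS_{K_0}$ forces $nV_n(K_0)\to 0$, a contradiction. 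For the lower bound, suppose $\lambda_j(\pmb{\theta})\to 0$. Pick $k_0$ with $\theta_{k_0}\ne o$, set $\Sigma_{k_0}^{\pm}$ as in the earlier proof, and let $z_{\pmb{\theta}}^{\pm}=\big(\int_{\Sigma_{k_0}^{\pm}}u\bigcdot\theta_i\,dS_{K_0}(u)\big)_i$; since $S_{K_0}$ is not concentrated on a closed hemisphere and has centroid $o$, one gets $z_{\pmb{\theta}}^{-}=-z_{\pmb{\theta}}^{+}$ and $|z_{\pmb{\theta}}^{+}|\ge z_{\pmb{\theta}}^{k_0,+}>0$. Applying Jensen's inequality separately on $\Sigma_{k_0}^{\pm}$ exactly as in \eqref{Phi-uxk} gives
\[
1\ge\Phi_j\bigg(\frac{z_{\pmb{\theta}}^{+}}{nV_n(K_0)\lambda_j(\pmb{\theta})}\bigg)+\Phi_j\bigg(\frac{-z_{\pmb{\theta}}^{+}}{nV_n(K_0)\lambda_j(\pmb{\theta})}\bigg).
\]
Here is the one genuinely new point: in Proposition \ref{continuity} the lower bound at this step came from $\min_{z\in S^{m-1}}\{\Phi(z)+\Phi(-z)\}>0$ for the \emph{fixed} $\Phi$; now $\Phi$ varies, so I need a bound $\inf_{j}\min_{z\in S^{m-1}}\{\Phi_j(z)+\Phi_j(-z)\}>0$. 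This follows because $\Phi_j\to\Phi_0$ uniformly on the compact set $S^{m-1}$ together with $\min_{z\in S^{m-1}}\{\Phi_0(z)+\Phi_0(-z)\}=:2m'>0$ from \eqref{mprime}: for $j$ large, $\Phi_j(z)+\Phi_j(-z)\ge m'>0$ on $S^{m-1}$. Combining this with convexity (inequality \eqref{phi-rz-ge-r-phi-z} applied to $\Phi_j$, valid since each $\Phi_j$ is a nonnegative convex function vanishing at $o$) and $\frac{|z_{\pmb{\theta}}^{+}|}{nV_n(K_0)\lambda_j(\pmb{\theta})}\to\infty$ yields $1\ge m'\cdot\lim_j\frac{|z_{\pmb{\theta}}^{+}|}{nV_n(K_0)\lambda_j(\pmb{\theta})}=\infty$, a contradiction.

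Finally, let $\{\lambda_{j_k}(\pmb{\theta})\}$ be any subsequence; by (i)--(ii) it has a further subsequence $\lambda_{j_{k_l}}(\pmb{\theta})\to\widetilde{\lambda}(\pmb{\theta})\in(0,\infty)$. On the compact set $\big\{\frac{u^{\mathrm{T}}{}_{\bigcdot}\,\pmb{\theta}}{t\,h_{K_0}(u)}:u\in S^{n-1},\ t\in[\tfrac12\widetilde\lambda(\pmb\theta),\,2\widetilde\lambda(\pmb\theta)]\big\}$ the convergence $\Phi_{j_{k_l}}\to\Phi_0$ is uniform, and $\frac{u^{\mathrm{T}}{}_{\bigcdot}\,\pmb{\theta}}{\lambda_{j_{k_l}}(\pmb{\theta})h_{K_0}(u)}\to\frac{u^{\mathrm{T}}{}_{\bigcdot}\,\pmb{\theta}}{\widetilde{\lambda}(\pmb{\theta})h_{K_0}(u)}$ uniformly in $u$, so $\Phi_{j_{k_l}}\big(\tfrac{u^{\mathrm{T}}{}_{\bigcdot}\,\pmb{\theta}}{\lambda_{j_{k_l}}(\pmb{\theta})h_{K_0}(u)}\big)\to\Phi_0\big(\tfrac{u^{\mathrm{T}}{}_{\bigcdot}\,\pmb{\theta}}{\widetilde{\lambda}(\pmb{\theta})h_{K_0}(u)}\big)$ uniformly on $S^{n-1}$. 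Passing to the limit in the defining identity gives $\int_{S^{n-1}}\Phi_0\big(\tfrac{u^{\mathrm{T}}{}_{\bigcdot}\,\pmb{\theta}}{\widetilde{\lambda}(\pmb{\theta})h_{K_0}(u)}\big)h_{K_0}(u)\,dS_{K_0}(u)=nV_n(K_0)$, so by uniqueness (Proposition \ref{Proposition-Hx}) $\widetilde{\lambda}(\pmb{\theta})=h_{\Pi_{\Phi_0}^m K_0}(\pmb{\theta})$. Since this limit is independent of the chosen subsequences, $\lambda_j(\pmb{\theta})\to h_{\Pi_{\Phi_0}^m K_0}(\pmb{\theta})$ for every $\pmb{\theta}\in S^{nm-1}$, hence uniformly, i.e. $\Pi_{\Phi_j}^m K_0\to\Pi_{\Phi_0}^m K_0$ in the Hausdorff metric. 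I expect the only real obstacle to be the uniform positivity $\inf_j\min_{z\in S^{m-1}}\{\Phi_j(z)+\Phi_j(-z)\}>0$ used in the lower-bound step; everything else is a direct transcription of the proof of Proposition \ref{continuity} with $K_j$ frozen and $\Phi_j$ varying.
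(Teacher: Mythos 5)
Your proof is correct, and the upper-bound step and the final subsequence/limit-identification step coincide (up to cosmetic differences) with the paper's argument. Where you genuinely diverge is the lower-bound step: you transplant the hemisphere decomposition $\Sigma_{k_0}^{\pm}$ and the Jensen-inequality chain \eqref{Phi-uxk} from Proposition \ref{continuity}, applied to $\Phi_j$ with $K_0$ frozen (so $z_{\pmb\theta}^{+}$ is $j$-independent), and you supply the one new ingredient this requires, namely that for $j$ large $\min_{z\in S^{m-1}}\{\Phi_j(z)+\Phi_j(-z)\}\ge m'>0$, which indeed follows from uniform convergence on $S^{m-1}$ together with \eqref{mprime}. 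The paper instead argues via the set $\Lambda_{\pmb\theta}=\{u\in S^{n-1}:\Phi_0(au^{\mathrm T}{}_{\bigcdot}\,\pmb\theta)=0\ \text{for some}\ a>0\}$, extracts a compact $\Lambda_1\subset S^{n-1}\setminus\Lambda_{\pmb\theta}$ of positive $S_{K_0}$-measure on which $\Phi_0(u^{\mathrm T}{}_{\bigcdot}\,\pmb\theta)$ is uniformly positive, uses uniform convergence $\Phi_j\to\Phi_0$ on $B_2^m$ to make $\Phi_j(u^{\mathrm T}{}_{\bigcdot}\,\pmb\theta)$ uniformly positive there, and concludes with \eqref{Phi-ru-to-infty} and Fatou's lemma. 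Both routes are valid; your approach is more closely parallel to Proposition \ref{continuity} and avoids constructing $\Lambda_1$, at the price of re-running the Jensen computation and needing the uniform-positivity observation on $S^{m-1}$ (which, as you phrase it with $\inf_j$, should really only be asserted for all sufficiently large $j$ — but since you are seeking a contradiction along a tail, that is all you need). The paper's approach avoids the Jensen step and uses only the local positivity of $\Phi_0$.
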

\begin{proof}  For each $\pmb{\theta}\in S^{nm-1}$, let $\widehat{\lambda}_j(\pmb{\theta}) =h_{\Pi_{\Phi_j}^{m}{K_0}}(\pmb{\theta})>0$ for each $j\in\mathbb{N}_0$. 
By \eqref{lemma equiv of pro body}, one gets
\begin{align}\label{about phi j}
\int_{S^{n-1}}\Phi_{j}\bigg(\frac{u^{\mathrm{T}}{}_{\bigcdot}\,  \pmb{\theta}}{\widehat{\lambda}_{j}(\pmb{\theta}) h_{K_0}(u)}\bigg)h_{K_0}(u)dS_{K_0}(u)=nV_n(K_0).
\end{align} We now claim that $\{\widehat{\lambda}_j(\pmb{\theta})\}_{j\in \mathbb{N}}$ has a finite upper bound and a positive lower bound. We prove the upper bound by an argument of contradiction. To this end, assume that $\lim_{j\rightarrow\infty}\widehat{\lambda}_j(\pmb{\theta})=\infty$. Then there exists $\widehat{j}_0\in\mathbb{N}$, such that, $\widehat{\lambda}_j(\pmb{\theta})>1$ for all $j>\widehat{j}_0$. For   $\pmb{\theta}\in S^{nm-1}$, $u\in S^{n-1}$ and $j\in\mathbb{N}_0$, let  
\begin{align*}
\widehat{y}_{j,\pmb{\theta}}(u)= \frac{u^{\mathrm{T}}{}_{\bigcdot}\,  \pmb{\theta}}{\widehat{\lambda}_j(\pmb{\theta}) h_{K_0}(u)}.
\end{align*} It can be checked that, for all $u\in S^{n-1}$,  $|u^{\mathrm{T}}{}_{\bigcdot}\, \pmb{\theta}|\leq 1$. Note that, as $K_0\in\mathcal{K}_{(o)}^{n,1}$,   there exist constants $c_1, c_2>0$ such that $c_1\leq h_{K_0}(u)\leq c_2$. Thus, $|\widehat{y}_{j,\pmb{\theta}}(u)|<\frac{1}{c_1}$ for $j>\widehat{j}_0$, and  $ \widehat{y}_{j,\pmb{\theta}}(\bigcdot)\rightarrow o$ uniformly on $S^{n-1}$  due to   $\lim_{j\rightarrow\infty}\widehat{\lambda}_j(\pmb{\theta})=\infty$. Moreover,  $\{|\widehat{y}_{j,\pmb{\theta}}(\bigcdot)|\}_{j\in\mathbb{N}}$ is uniformly bounded on $S^{n-1}$ and hence contained in a compact set, say $\Lambda$. On $\Lambda$, $\Phi_0$ is continuous and hence is bounded from above. This, together with \eqref{def-func-conv-1}, yields  that $\Phi_j\rightarrow \Phi_0$ uniformly on $\Lambda$ and   $$\sup\Big\{\Phi_{j}(z): j\in \mathbb{N}\ \ \mathrm{and}\ \  z\in \Lambda\Big\}<\infty. $$ As $\Phi_0(o)=0$, one has,  for each $u\in S^{n-1}$, $$\lim_{j\rightarrow \infty}\Phi_{j}\big(\widehat{y}_{j,\pmb{\theta}}(u)\big)=\Phi_{0}(o)=0.$$  
It then follows from \eqref{about phi j} and the dominated convergence theorem that 
\begin{align*}
nV_n(K_0)
 =\lim_{j\rightarrow \infty}\int_{S^{n-1}}\Phi_{j}\big(\widehat{y}_{j,\pmb{\theta}}(u)\big)h_{K_0}(u)dS_{K_0}(u) =\int_{S^{n-1}}\lim_{j\rightarrow \infty}\Phi_{j}\big(\widehat{y}_{j,\pmb{\theta}}(u)\big)h_{K_0}(u)dS_{K_0}(u)=0.
\end{align*} 
This is a contradiction, and hence $\{\widehat{\lambda}_j(\pmb{\theta})\}_{j\in \mathbb{N}}$ has a finite upper bound.

We again use the argument by contradiction to prove that $\{\widehat{\lambda}_j(\pmb{\theta})\}_{j\in \mathbb{N}}$ has a positive lower bound. To this end, assume that $\lim_{j\rightarrow\infty}\widehat{\lambda}_j(\pmb{\theta})=0$. Following \eqref{M-3}, we consider   
\begin{align*}
\Lambda_{\pmb{\theta}}=\{u\in S^{n-1}: \Phi_0(a u ^{\mathrm{T}}{}_{\bigcdot}\, \pmb{\theta})=0 \ \ \mathrm{for\ some}\ \ a>0 \}.
\end{align*}
Again $\Lambda_{\pmb{\theta}}$ is contained in a closed hemisphere of $S^{n-1}$ and hence $S_{K_0}(S^{n-1}\setminus \Lambda_{\pmb{\theta}})>0.$  Moreover, $S^{n-1}\setminus \Lambda_{\pmb{\theta}}$ contains an open hemisphere. Thus, $S^{n-1}\setminus \Lambda_{\pmb{\theta}}$ can be obtained by the union of an increasing nest of its compact subsets. Due to $S_{K_0}(S^{n-1}\setminus \Lambda_{\pmb{\theta}})>0,$ one can find a compact subset, say $\Lambda_1\subseteq S^{n-1}\setminus \Lambda_{\pmb{\theta}}$, satisfying that $S_{K_0}(\Lambda_1)>0. $ Note that, for $u\in \Lambda_1$,   $\Phi_0(a u^{\mathrm{T}}{}_{\bigcdot}\,  \pmb{\theta})>0$ holds for all $a>0$, and in particular, $\Phi_0(u^{\mathrm{T}}{}_{\bigcdot}\,  \pmb{\theta})>0$. It can be checked that $ \Phi_0(u^{\mathrm{T}}{}_{\bigcdot}\,  \pmb{\theta})$ as a function on $u\in S^{n-1}$ is positive and continuous on $\Lambda_1$, and hence 
\begin{align} \label{positive-M_1}
\min_{u\in \Lambda_1} \Phi_0(u^{\mathrm{T}}{}_{\bigcdot}\,  \pmb{\theta})>0.\end{align} 
By \eqref{def-func-conv-1}, $\Phi_j\rightarrow \Phi_0$ uniformly on $B_2^m$. As $\{u^{\mathrm{T}}{}_{\bigcdot}\,  \pmb{\theta}: u\in \Lambda_1\}\subset B_2^m$, then, $\Phi_j(u^{\mathrm{T}}{}_{\bigcdot}\,  \pmb{\theta})\rightarrow \Phi_0(u^{\mathrm{T}}{}_{\bigcdot}\,  \pmb{\theta})$ uniformly on $\Lambda_1$. Without loss of generality,  due to \eqref{positive-M_1}, we can assume that 
\begin{align}\label{inf-Phi-j}
\inf\Big\{\Phi_j(u^{\mathrm{T}}{}_{\bigcdot}\,  \pmb{\theta}):  j\in \mathbb{N} \ \ \mathrm{and} \ \ u\in \Lambda_1\Big\}>0.    
\end{align}
As  $\lim_{j\rightarrow\infty}\widehat{\lambda}_j(\pmb{\theta})=0$ and $h_{K_0}>c_1$ on $S^{n-1}$, one sees that $\lim_{j\rightarrow\infty}  \widehat{\lambda}_j(\pmb{\theta})h_{K_0}(u) =0$ for each $u\in S^{n-1}.$ 
It follows from \eqref{Phi-ru-to-infty} and \eqref{inf-Phi-j}  that, for all $u\in \Lambda_1,$ 
$$\lim_{j\rightarrow\infty}\Phi_j\bigg( \frac{u^{\mathrm{T}}{}_{\bigcdot}\,  \pmb{\theta}}{\widehat{\lambda}_j(\pmb{\theta}) h_{K_0}(u)}\bigg)\geq \lim_{j\rightarrow\infty} \frac{\Phi_j(u^{\mathrm{T}}{}_{\bigcdot}\,  \pmb{\theta})}{\widehat{\lambda}_j(\pmb{\theta}) h_{K_0}(u)} =\infty.  $$
 Together with Fatou's lemma and the fact that $S_{K_0}(\Lambda_1)>0$, one gets
\begin{align*}
nV_n(K_0)&=\lim_{j\rightarrow \infty}\int_{S^{n-1}}\Phi_j\bigg( \frac{u^{\mathrm{T}}{}_{\bigcdot}\,  \pmb{\theta}}{\widehat{\lambda}_j(\pmb{\theta}) h_{K_0}(u)}\bigg)h_{K_0}(u)dS_{K_0}(u)\\
&\geq \liminf_{j\rightarrow \infty}\int_{\Lambda_1}\Phi_j\bigg( \frac{u^{\mathrm{T}}{}_{\bigcdot}\,  \pmb{\theta}}{\widehat{\lambda}_j(\pmb{\theta}) h_{K_0}(u)}\bigg)h_{K_0}(u)dS_{K_0}(u)\\
&\ge\int_{\Lambda_1 }\liminf_{j\rightarrow \infty}
\Phi_j\bigg( \frac{u^{\mathrm{T}}{}_{\bigcdot}\,  \pmb{\theta}}{\widehat{\lambda}_j(\pmb{\theta}) h_{K_0}(u)}\bigg) h_{K_0}(u)dS_{K_0}(u)=\infty,
\end{align*} a contradiction. Hence,  $\{\widehat{\lambda}_j(\pmb{\theta})\}_{j\in\mathbb{N}}$ has a positive lower bound.  
 
In summary, for each $\pmb{\theta}\in S^{nm-1},$ the sequence $\{\widehat{\lambda}_j(\pmb{\theta})\}_{j\in\mathbb{N}}$ is bounded. Thus, any subsequence of $ \{\widehat{\lambda}_j(\pmb{\theta})\}_{j\in\mathbb{N}}$, say $\{\widehat{\lambda}_{j_k}(\pmb{\theta})\}_{k\in\mathbb{N}}$, must have a convergent subsequence, say $\{\widehat{\lambda}_{j_{k_l}}(\pmb{\theta})\}_{l\in\mathbb{N}}$, such that $$\lim_{l\rightarrow \infty} \widehat{\lambda}_{j_{k_l}}(\pmb{\theta})=\widehat{\lambda}(\pmb{\theta}) \in (0, \infty). $$ 
It follows from $K_0\in\mathcal{K}_{(o)}^{n,1}$, $\Phi_{j_{k_l}}\rightarrow\Phi_0$, \eqref{about phi j} with $j$ replaced by $j_{k_l}$, and  the  dominated convergence theorem  that
 \begin{align*}
nV_n(K_0)&=\lim_{l\rightarrow\infty}\int_{S^{n-1}}\Phi_{j_{k_l}}\bigg(\frac{u^{\mathrm{T}}{}_{\bigcdot}\,  \pmb{\theta}}{\widehat{\lambda}_{j_{k_l}}(\pmb{\theta}) h_{K_0}(u)}\bigg)h_{K_0}(u)dS_{K_0}(u)\\
&=\int_{S^{n-1}}\Phi_{0}\bigg(\frac{u^{\mathrm{T}}{}_{\bigcdot}\,  \pmb{\theta}}{\widehat{\lambda}(\pmb{\theta}) h_{K_0}(u)}\bigg)h_{K_0}(u)dS_{K_0}(u).
 \end{align*}
Thus, by \eqref{lemma equiv of pro body}, for each $\pmb{\theta}\in S^{nm-1}$, one has $\widehat{\lambda}(\pmb{\theta})=h_{\Pi_{\Phi_0}^{m}K_0}(\pmb{\theta})$,  and hence, 
\begin{align*}
\lim_{j\rightarrow\infty} h_{\Pi_{\Phi_{j}}^{m}{K_0}}(\pmb{\theta})= \lim_{j\rightarrow\infty}\widehat{\lambda}_{j}(\pmb{\theta}) =h_{\Pi_{\Phi_0}^{m}K_0}(\pmb{\theta}),
\end{align*} due to the arbitrariness of $\widehat{\lambda}_{j_{k}}(\pmb{\theta}).$
This further yields the uniform convergence of   $h_{\Pi_{\Phi_{j}}^{m}{K_0}}\rightarrow h_{\Pi_{\Phi}^{m}K_0}$  on $S^{nm-1}$, and hence $\Pi_{\Phi_{j}}^{m}{K_0}\rightarrow \Pi_{\Phi}^{m}{K_0}$ as desired.
\end{proof}

For $A\in SL(n)$ and $\pmb{y}=(y_i)_i\in M_{n,m}(\mathbb{R})$ with $y_i\in M_{n,1}(\mathbb{R})$, let $A{}_{\bigcdot}\, \pmb{y}=(A{}_{\bigcdot}\, y_1,\cdots,A{}_{\bigcdot}\, y_m). $  We also let $$ A^{-1}{}_{\bigcdot}\, \pmb{y}=(A^{-1}{}_{\bigcdot}\, y_1,\cdots,A^{-1}{}_{\bigcdot}\, y_m)\ \ \mathrm{and} \ \ A^{\mathrm{-T}}{}_{\bigcdot}\, \pmb{y}=(A^{\mathrm{-T}}{}_{\bigcdot}\, y_1,\cdots,A^{\mathrm{-T}}{}_{\bigcdot}\, y_m).$$   

\begin{proposition}\label{affine inveri}
Let $K\in\mathcal{K}_{(o)}^{n,1}$ and $A\in SL(n)$. Then
\begin{align}\label{affine inv}
\Pi_{\Phi}^{m}A{}_{\bigcdot}\, K={A}^{\mathrm{-T}}{}_{\bigcdot}\, \Pi_{\Phi}^{m}K.
\end{align}
\end{proposition}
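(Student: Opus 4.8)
The plan is to verify \eqref{affine inv} by comparing support functions through the characterization \eqref{lemma equiv of pro body}. Fix $\pmb{x}\in M_{n,m}(\mathbb{R})\setminus\{o\}$ and put $\lambda=h_{\Pi_{\Phi}^{m}A{}_{\bigcdot}\, K}(\pmb{x})$. Since $A\in SL(n)$ one has $V_n(A{}_{\bigcdot}\, K)=V_n(K)$, so \eqref{lemma equiv of pro body} applied to the body $A{}_{\bigcdot}\, K$ reads
\[
\int_{S^{n-1}}\Phi\!\left(\frac{v^{\mathrm{T}}{}_{\bigcdot}\, \pmb{x}}{\lambda\,h_{A{}_{\bigcdot}\, K}(v)}\right)h_{A{}_{\bigcdot}\, K}(v)\,dS_{A{}_{\bigcdot}\, K}(v)=nV_n(K).
\]
The first step is to record how the ingredients transform under $A$: by \eqref{h AK}, $h_{A{}_{\bigcdot}\, K}(v)=h_K(A^{\mathrm{T}}{}_{\bigcdot}\, v)$; and, by the classical change-of-variables rule for surface area measures under linear maps, for every continuous $f$ on $S^{n-1}$,
\[
\int_{S^{n-1}}f(v)\,dS_{A{}_{\bigcdot}\, K}(v)=\int_{S^{n-1}}f\!\left(\frac{A^{\mathrm{-T}}{}_{\bigcdot}\, u}{|A^{\mathrm{-T}}{}_{\bigcdot}\, u|}\right)|A^{\mathrm{-T}}{}_{\bigcdot}\, u|\,dS_K(u).
\]
This last identity follows from \eqref{ S k and H n-1}, together with the facts that $\partial(A{}_{\bigcdot}\, K)=A{}_{\bigcdot}\,\partial K$, that the outer unit normal of $A{}_{\bigcdot}\, K$ at $A{}_{\bigcdot}\, y$ equals $A^{\mathrm{-T}}{}_{\bigcdot}\,\nu_K(y)/|A^{\mathrm{-T}}{}_{\bigcdot}\,\nu_K(y)|$, and that the $(n-1)$-dimensional Jacobian of the linear map $y\mapsto A{}_{\bigcdot}\, y$ restricted to $\partial K$ equals $|\det A|\,|A^{\mathrm{-T}}{}_{\bigcdot}\,\nu_K(y)|$, which reduces to $|A^{\mathrm{-T}}{}_{\bigcdot}\,\nu_K(y)|$ since $A\in SL(n)$.

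Next I would substitute $v=A^{\mathrm{-T}}{}_{\bigcdot}\, u/|A^{\mathrm{-T}}{}_{\bigcdot}\, u|$ into the displayed integral identity. Since $A^{\mathrm{T}}{}_{\bigcdot}\, v=u/|A^{\mathrm{-T}}{}_{\bigcdot}\, u|$, the homogeneity \eqref{hom-1-1} gives $h_{A{}_{\bigcdot}\, K}(v)=h_K(u)/|A^{\mathrm{-T}}{}_{\bigcdot}\, u|$, and likewise $v^{\mathrm{T}}{}_{\bigcdot}\,\pmb{x}=u^{\mathrm{T}}{}_{\bigcdot}\,(A^{-1}{}_{\bigcdot}\,\pmb{x})/|A^{\mathrm{-T}}{}_{\bigcdot}\, u|$. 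Hence the argument of $\Phi$ simplifies to $u^{\mathrm{T}}{}_{\bigcdot}\,(A^{-1}{}_{\bigcdot}\,\pmb{x})/(\lambda\,h_K(u))$, while the factor $|A^{\mathrm{-T}}{}_{\bigcdot}\, u|^{-1}$ coming from $h_{A{}_{\bigcdot}\, K}(v)$ cancels the Jacobian factor $|A^{\mathrm{-T}}{}_{\bigcdot}\, u|$. The identity therefore becomes
\[
\int_{S^{n-1}}\Phi\!\left(\frac{u^{\mathrm{T}}{}_{\bigcdot}\,(A^{-1}{}_{\bigcdot}\,\pmb{x})}{\lambda\,h_K(u)}\right)h_K(u)\,dS_K(u)=nV_n(K).
\]
Because $A$ is invertible, $A^{-1}{}_{\bigcdot}\,\pmb{x}\ne o$, so the uniqueness assertion of Proposition \ref{Proposition-Hx} together with \eqref{lemma equiv of pro body} forces $\lambda=h_{\Pi_{\Phi}^{m}K}(A^{-1}{}_{\bigcdot}\,\pmb{x})$. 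Finally, \eqref{h AK} applied with the matrix $A^{-1}$ gives $h_{\Pi_{\Phi}^{m}K}(A^{-1}{}_{\bigcdot}\,\pmb{x})=h_{A^{\mathrm{-T}}{}_{\bigcdot}\,\Pi_{\Phi}^{m}K}(\pmb{x})$, whence $h_{\Pi_{\Phi}^{m}A{}_{\bigcdot}\, K}(\pmb{x})=h_{A^{\mathrm{-T}}{}_{\bigcdot}\,\Pi_{\Phi}^{m}K}(\pmb{x})$ for all $\pmb{x}\ne o$ (and trivially at $\pmb{x}=o$), which is \eqref{affine inv}.

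The step I expect to require the most care is the transformation rule for $S_{A{}_{\bigcdot}\, K}$ stated above; once it is in place, the remainder is bookkeeping with the matrix identities \eqref{hom-1-1}--\eqref{h KB} and the uniqueness from Proposition \ref{Proposition-Hx}. An alternative route, bypassing any surface-area computation, would be to transform the polar-volume integral representation of $V_{nm}(\Pi_{\Phi}^{m,*}(A{}_{\bigcdot}\, K))$ directly, but comparing support functions is cleaner and is the approach I would adopt.
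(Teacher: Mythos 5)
Your proof is correct and follows essentially the same route as the paper's: both compare support functions via the characterizing equation \eqref{lemma equiv of pro body}, both use the transformation of the outer normal $u\mapsto A^{-\mathrm{T}}u/|A^{-\mathrm{T}}u|$ and the resulting identity $h_{A\bigcdot K}(v)=h_K(u)/|A^{-\mathrm{T}}u|$, and both conclude via uniqueness of the solution to $H_{\pmb{x}}(t)=nV_n(K)$ together with \eqref{h AK}. The only cosmetic difference is how the measure is transported: the paper packages everything into the normalized cone-volume measure $dV_K=h_K\,dS_K/(nV_n(K))$ and cites the known $SL(n)$-invariance $dV_{A\bigcdot K}(\widetilde u)=dV_K(u)$, whereas you re-derive the surface-area-measure transformation from \eqref{ S k and H n-1} and the $(n-1)$-dimensional Jacobian $|\det A|\,|A^{-\mathrm{T}}\nu_K|$ and then observe that the factor $|A^{-\mathrm{T}}u|$ cancels against $h_{A\bigcdot K}(v)=h_K(u)/|A^{-\mathrm{T}}u|$; both bookkeepings are standard and lead to the same cancellation.
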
 

\begin{proof} Let $u\in S^{n-1}$ be a unit outer normal vector of   $y\in \partial{K}$.  For $A\in SL(n)$,    $A{}_{\bigcdot}\, y$ is a boundary point of $A{}_{\bigcdot}\, K$ and 
\begin{align}\label{widetilde-u}
\widetilde{u}= \frac{A^{\mathrm{-T}}{}_{\bigcdot}\, u}{|A^{\mathrm{-T}}{}_{\bigcdot}\, u|} 
\end{align}
is a unit outer normal vector of $A{}_{\bigcdot}\, K$ at $A{}_{\bigcdot}\, y$. It follows from \eqref{hom-1-1}, \eqref{h AK} and \eqref{widetilde-u} that
\begin{align}\label{hAK-widetilde-u}
h_{A{}_{\bigcdot}\, K}(\widetilde{u})=h_K(A^{\mathrm{T}}{}_{\bigcdot}\, \widetilde{u})=h_K\bigg(\frac{u}{|A^{-{\mathrm{T}}}{}_{\bigcdot}\, u|}\bigg)=
\frac{h_K(u)}{|A^{-{\mathrm{T}}}{}_{\bigcdot}\, u|}.
\end{align} 
Denote $V_K$ the probability measure given  by 
\begin{align*}
d V_K(u)=\frac{h_K(u)d S_K(u)}{n V_n(K)}.
\end{align*}  
Note that
 $d V_{A{}_{\bigcdot}\, K}(\widetilde{u})=d V_K(u)$ for $A\in SL(n)$ (see \cite[P. 502]{schneider}).  By  Definition \ref{supp-pro}, \eqref{h AK}, \eqref{widetilde-u} and \eqref{hAK-widetilde-u}, one has, for any $\pmb{x}\in M_{n,m}(\mathbb{R})$,  
\begin{align*}
h_{\Pi_{\Phi}^{m}A{}_{\bigcdot}\, K}(\pmb{x})&=\inf\left\{\lambda>0:\int_{S^{n-1}}\Phi\bigg(\frac{\widetilde{u}^{\mathrm{T}}{}_{\bigcdot}\, \pmb{x}}{\lambda h_{A{}_{\bigcdot}\, K}(\widetilde{u})}\bigg)d V_{A{}_{\bigcdot}\, K}(\widetilde{u})\leq 1\right\}\\
&=\inf\left\{\lambda>0:\int_{S^{n-1}}\Phi\bigg(\frac{{(A^{-{\mathrm{T}}}{}_{\bigcdot}\, u)}^{\mathrm{T}}{}_{\bigcdot}\,  \pmb{x}}{\lambda h_{K}(u)}\bigg) d V_{K}(u)\leq 1\right\}\\
&=\inf\left\{\lambda>0:\int_{S^{n-1}}\Phi\bigg(\frac{u^{\mathrm{T}}{}_{\bigcdot}\,  (A^{-1}{}_{\bigcdot}\, \pmb{x})}{\lambda h_{K}(u)}\bigg)  d V_{K}(u)\leq 1\right\}\\
&=h_{\Pi_{\Phi}^{m}K}(A^{-1}{}_{\bigcdot}\, \pmb{x})=h_{A^{-\mathrm{T}}{}_{\bigcdot}\, \Pi_{\Phi}^{m}K}(\pmb{x}).
\end{align*} This completes the proof. 
\end{proof}

\section{Higher-order Orlicz-Petty projection inequality}\label{sec-inequality}
In this section, we will establish the affine isoperimetric inequality regarding the $m$th order Orlicz projection body. Our approach is based on the classical Steiner symmetrization (see, e.g., \cite{schneider}) and the Fiber symmetrization by McMullen \cite{Mcmullen-1999}. See Bianchi, Gardner and Gronchi \cite{Bianchi-Gardner-Gronchi-2017} and Ulivelli \cite{Ulivelli} for more details on the  Fiber symmetrization and its extensions. We begin by reviewing the relevant definitions and some properties that we need to use. 

Recall that for $v\in S^{n-1}$, $v^{\perp}$ is the orthogonal complement of $v$, i.e., $$v^{\perp}=\big\{x\in M_{n,1}(\mathbb{R}): x\bigcdot v=0\big\}.$$ Denote by  $P_{v^{\perp}}K$ the orthogonal projection of $K\in\mathcal{K}^{n,1}$ onto $v^{\perp}$. Let $f_j:P_{v^{\perp}}K\rightarrow\mathbb{R}$, $j=1,2,$ be concave functions such that
\begin{align*}
K=\Big\{z+\tau v\in M_{n,1}(\mathbb{R}): z\in P_{v^{\perp}}K\ \ \mathrm{and}\ \ -f_1(z)\leq \tau\leq f_2(z)\Big\}.
\end{align*}
If $f_1$ is differentiable at $z$, then  $\nabla f_1(z)\perp v$, where $\nabla f_1(z)$ denotes the gradient of $f_1$ at $z$, and 
the outer unit normal vector of $z-f_1(z)v \in\partial K$ is:
\begin{align}\label{nk-f1}
\nu_K\big(z-f_1(z)v\big)=\frac{-\nabla f_1(z)-v}{|-\nabla f_1(z)-v|}=\frac{-\nabla f_1(z)-v}{\sqrt{1+|\nabla f_1(z)|^2}}. 
\end{align}   Similarly, if $f_2$ is differentiable at $z$, then $\nabla f_2(z)\perp v$ and  
\begin{align}\label{nk-f2}
\nu_K\big(z+f_2(z)v\big)=\frac{-\nabla f_2(z)+v}{|-\nabla f_2(z)+v|}=\frac{-\nabla f_2(z)+v}{\sqrt{1+|\nabla f_2(z)|^2}}.
\end{align} 
Moreover, for $j=1, 2$, if we let $\langle f_j\rangle(z)=-{\nabla f_j(z)}^{\mathrm{T}}{}_{\bigcdot}\,  z+f_j(z),$ then 
\begin{align}\label{nk-f3}
 \big(z+(-1)^jf_j( z)v\big)\bigcdot\nu_K\big(z+(-1)^j f_j(z)v\big)=\frac{\langle f_j\rangle (z)}{\sqrt{1+|\nabla f_j(z)| ^2}}.   
\end{align}	
Note that $\langle \bigcdot\rangle$ is a linear operator and the concave functions $f_j$, $j=1, 2$,  are differentiable $\mathcal{H}^{n-1}$-almost everywhere on $P_{v^{\perp}}K$. If $K\in \mathcal{K}^{n,1}_{(o)}$, the origin is in the interior of $K$ and the support function of $K$ must be strictly positive on $\sphere$. This yields that both $\langle f_1\rangle$ and $\langle f_2\rangle$ are strictly positive. Define $S_vK$, the Steiner symmetrization of $K\in\mathcal{K}^{n,1}$ with respect to $v\in S^{n-1}$, by 
\begin{align}\label{stein-sym-1}
S_vK=\Big\{z+\tau v\in M_{n,1}(\mathbb{R}): z\in P_{v^{\perp}}K \ \ \mathrm{and} \ \   |\tau|\leq \frac{f_1(z)+f_2(z)}{2}\Big\}.
\end{align}  Clearly  $S_{v_j}K \rightarrow S_vK$ if $v_j\rightarrow v\in S^{n-1}$.  It is well-known that, for all $v\in \sphere,$ \begin{align}
    V_n(S_vK)=V_n(K). \label{volume-pres}
\end{align}

Let $K\in \mathcal{K}^{n,1}$ and $v\in S^{n-1}$. Denote by $\Xi_{v,K}$  the set of  points $x\in \partial K$ such that $x+\mathbb{R}v$ is tangent to $K$.
If $\mathcal{H}^{n-1}(\Xi_{v,K})=0$, one has, for any continuous function $g:\partial K\rightarrow\mathbb{R}$, 
\begin{align}\label{partial K to projection formula}
\int_{\partial K}\!\!g(w)d\mathcal{H}^{n-1}(w)
\!=\!\!\int_{P_{v^{\perp}}K}\!\!\bigg(\!g\big(z\!-\!f_1(z)v\big)\sqrt{1\!+\!|\nabla f_1(z)|^2} +g\big(z\!+\!f_2(z)v\big)\sqrt{1\!+\!|\nabla f_2(z)|^2}\!\bigg) dz.
\end{align} 

Denote $V^m(v)=V^{\perp}\times \cdots \times V^{\perp}$  the orthogonal complement of the $m$-dimensional space $[v]$ of $M_{n,m}(\mathbb{R})$, where  $[v]=\{v{}_{\bigcdot}\, \!w: w\!\in\! M_{1,m}(\mathbb{R})\}.$ 
For $\pmb{K}\in\mathcal{K}^{n,m}$, the Fiber symmetrization of $\pmb{K}$ with respect to  $v\in S^{n-1}$ is defined by
\begin{align}
\bar{S}_v \pmb{K}\!=\!\Big\{{\pmb{x}}\!+\!v{}_{\bigcdot}\, \frac{t\!+\!s}{2}: {\pmb{x}}\in \! V^m(v),\!\ s\in\! M_{1,m}(\mathbb{R}),\ t\in \! M_{1,m}(\mathbb{R}),\ {\pmb{x}}-v{}_{\bigcdot}\, s\in \pmb{K}\ \mathrm{and} \ {\pmb{x}}+v{}_{\bigcdot}\, t\in\! \pmb{K}\Big\}. \label{m-th-stein-1}
\end{align} 
Observe that, if ${\pmb{x}}-v{}_{\bigcdot}\, s\in  \partial \pmb{K}$ and ${\pmb{x}}+v{}_{\bigcdot}\, t\in \partial \pmb{K}$ with  $-s \ne t$, then $${\pmb{x}}+\frac{v{}_{\bigcdot}\, {(t+s)}}{2}\in \partial\bar{S}_v \pmb{K}.$$ 
Clearly, $\bar{S}_{v_j}\pmb{K}\rightarrow \bar{S}_v\pmb{K}$ if $v_j\rightarrow v\in S^{n-1}$.
It follows from \cite[Lemma 3.1]{Ulivelli}  that,
\begin{align}\label{vnm l}
V_{nm}(\pmb{K})\leq V_{nm}(\bar{S}_v \pmb{K})\ \ \mathrm{for}\ \pmb{K}\in\mathcal{K}^{n,m} \ \ \mathrm{and}
\ \ v\in S^{n-1}.
\end{align} 

We shall need the following proposition, which  provides the relationship between $\bar{S}_{v}\Pi_{\Phi}^{m,*}K$ and $\Pi_{\Phi}^{m,*}S_vK$ for $K\in \mathcal{K}^{n,1}_{(o)}$. Recall that $\Pi_{\Phi}^{m,*}K=(\Pi_{\Phi}^{m})^*K$ is the polar body of $\Pi_{\Phi}^{m}K.$
\begin{proposition}\label{key}
Let $\Phi\in\mathcal{C}$ and $K\in\mathcal{K}_{(o)}^{n,1}$. For any $v\in S^{n-1}$, one has
\begin{align}\label{inequality}
\bar{S}_{v}\Pi_{\Phi}^{m,*}K\subseteq \Pi_{\Phi}^{m,*}S_vK.
\end{align}	
If in addition $\Phi\in\mathcal{C}$ is strictly convex and $\bar{S}_{v}\Pi_{\Phi}^{m,*}K=\Pi_{\Phi}^{m,*}S_vK$,  then all the midpoints of the chords of $K$ parallel to $v$ lie in a subspace of $M_{n,1}(\mathbb{R}).$
\end{proposition}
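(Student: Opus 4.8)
The strategy is the classical one for such symmetrization comparisons: show that the support function of $\Pi_{\Phi}^{m,*}S_vK$ dominates that of $\bar S_v\Pi_{\Phi}^{m,*}K$, equivalently (passing to polars) that $\Pi_{\Phi}^m S_vK$ is contained in $\bar S_v\Pi_{\Phi}^m K$ in the appropriate sense, but it is cleaner to argue directly on the level of the defining integral inequality. Fix $v\in S^{n-1}$ and decompose $M_{n,1}(\R)$ as $v^{\perp}\oplus[v]$, writing $K$ via the concave roof functions $f_1,f_2$ on $P_{v^{\perp}}K$ as in \eqref{stein-sym-1}. Using \eqref{partial K to projection formula} (after noting $\mathcal{H}^{n-1}(\Xi_{v,K})=0$, which holds for $\mathcal H^{n-1}$-a.e.\ direction and can be assumed WLOG, or handled by the same density argument used for Steiner symmetrization) together with \eqref{ S k and H n-1}, \eqref{nk-f1}, \eqref{nk-f2} and \eqref{nk-f3}, one rewrites the normalizing integral $\int_{S^{n-1}}\Phi\!\big(u^{\mathrm T}{}_{\bigcdot}\,\pmb x/h_K(u)\big)h_K(u)\,dS_K(u)$ as an integral over $P_{v^{\perp}}K$ of a sum of two terms, one coming from the lower boundary (normal involving $-\nabla f_1-v$) and one from the upper boundary (normal involving $-\nabla f_2+v$). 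The corresponding quantity for $S_vK$ has the same base $P_{v^{\perp}}K$ but with $f_1,f_2$ both replaced by $(f_1+f_2)/2$, so its two boundary pieces have normals involving $-\tfrac12(\nabla f_1+\nabla f_2)\mp v$ and the common roof value $\tfrac12(\langle f_1\rangle+\langle f_2\rangle)$.

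The heart of the matter is then a pointwise (in $z\in P_{v^{\perp}}K$) convexity inequality. Write $\pmb x=(x_i)_i\in M_{n,m}(\R)$ and for a point $\pmb y\in M_{n,m}(\R)$ on the boundary of the target polar body use the identity $h_{\Pi_\Phi^{m,*}K\text{-test}}$; concretely, for $\pmb x\in\Pi_\Phi^m S_vK$ one wants to show $\pmb x$ lies in $\bar S_v\Pi_\Phi^m K$, which by \eqref{m-th-stein-1} amounts to splitting $\pmb x = \pmb x' + v{}_{\bigcdot}\,\tfrac{t+s}{2}$ with $\pmb x'-v{}_{\bigcdot}\,s$ and $\pmb x'+v{}_{\bigcdot}\,t$ both in $\Pi_\Phi^m K$. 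The natural choice of $s,t$ comes from the decomposition of $\pmb x$ along $v$ versus $V^m(v)$; one then feeds the two resulting matrices into the defining integrals for $\Pi_\Phi^m K$ and uses convexity of $\Phi$ in the first argument together with the fact that the arithmetic mean of the two roof data $\langle f_1\rangle,\langle f_2\rangle$ is exactly the roof datum of $S_vK$. The key algebraic point is that, after the substitutions, the argument of $\Phi$ in the $S_vK$ integrand at a point $z$ is a convex combination (with weights $\langle f_1\rangle/(\langle f_1\rangle+\langle f_2\rangle)$ and $\langle f_2\rangle/(\langle f_1\rangle+\langle f_2\rangle)$) of the two arguments of $\Phi$ appearing in the $K$ integrand at $z$ — this is precisely the computation that makes Jensen/convexity of $\Phi$ close the inequality, and $\langle f_j\rangle>0$ (guaranteed since $K\in\mathcal K_{(o)}^{n,1}$) is what makes these legitimate weights. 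Summing over $z$ and comparing normalizations $nV_n(S_vK)=nV_n(K)$ from \eqref{volume-pres} yields $\bar S_v\Pi_\Phi^{m,*}K\subseteq\Pi_\Phi^{m,*}S_vK$.

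For the equality case, suppose $\Phi$ is strictly convex and $\bar S_v\Pi_\Phi^{m,*}K=\Pi_\Phi^{m,*}S_vK$. Then the inclusion above is an equality of bodies, forcing the pointwise convexity inequality to be an equality for $\mathcal H^{n-1}$-a.e.\ $z\in P_{v^{\perp}}K$ and for enough test matrices $\pmb x$. By strict convexity of $\Phi$, equality in the two-point Jensen step forces the two arguments of $\Phi$ at $z$ to coincide, i.e.\ (after unwinding the substitutions, which involve $z$, $\nabla f_1(z)$, $\nabla f_2(z)$, $\langle f_1\rangle(z)$, $\langle f_2\rangle(z)$, and the components of $\pmb x$) one gets a linear relation among $\nabla f_1(z)$ and $\nabla f_2(z)$ that, letting $\pmb x$ range over a basis, reduces to $\nabla f_1(z)=\nabla f_2(z)$ (equivalently $\langle f_1\rangle(z)-\langle f_2\rangle(z)$ is the differential of the affine function $(f_1-f_2)(z)$, and strict convexity kills the gradient discrepancy). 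Hence $f_1-f_2$ has a.e.\ vanishing gradient on the connected set $\operatorname{int}P_{v^{\perp}}K$, so $f_1-f_2$ is constant there; the midpoint of the chord of $K$ in direction $v$ over $z$ is $\tfrac12(f_2(z)-f_1(z))v$, which is then constant, so all such midpoints lie in a single translate of $v^{\perp}$, i.e.\ in a subspace of $M_{n,1}(\R)$ (after the standard reduction placing that subspace through the origin).

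\medskip

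\noindent\textbf{Main obstacle.} The delicate point is the bookkeeping in the pointwise step: correctly identifying, via \eqref{nk-f1}--\eqref{nk-f3}, that the $S_vK$ integrand's $\Phi$-argument is the $\langle f_1\rangle,\langle f_2\rangle$-weighted average of the $K$ integrand's two $\Phi$-arguments, including the matching of the $h_K(u)\,dS_K(u)$ weights with the $\sqrt{1+|\nabla f_j|^2}\,dz$ Jacobians so that the "roof" values $\langle f_j\rangle$ appear as the convex-combination weights. Getting this identity exactly right — in the $m$-column matrix setting, where $u^{\mathrm T}{}_{\bigcdot}\,\pmb x\in M_{1,m}(\R)$ rather than a scalar — is what makes the convexity of $\Phi$ on $M_{1,m}(\R)$ do the work, and it is also the identity whose equality case, under strict convexity, must be analyzed carefully to extract $\nabla f_1=\nabla f_2$ and hence the flatness of the midpoint locus.
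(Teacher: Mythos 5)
Your overall strategy (rewrite the $\Phi$-integrals over $P_{v^\perp}K$ via $f_1,f_2$, exploit convexity of $\Phi$, and for equality exploit strict convexity) is the same as the paper's. But there are two genuine problems.

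First, the inclusion argument is set up with the wrong object. You write that for $\pmb x\in\Pi_\Phi^m S_vK$ one should show $\pmb x\in\bar S_v\Pi_\Phi^m K$; that would give $\Pi_\Phi^m S_vK\subseteq\bar S_v\Pi_\Phi^m K$, hence $(\bar S_v\Pi_\Phi^m K)^*\subseteq\Pi_\Phi^{m,*}S_vK$, which is not the desired inclusion $\bar S_v\bigl(\Pi_\Phi^{m,*}K\bigr)\subseteq\Pi_\Phi^{m,*}S_vK$ because the fiber symmetrization does not commute with polarity. The correct thing to do (and what the paper does) is to take a point $\pmb\xi+v{}_{\bigcdot}\, r\in\bar S_v\Pi_\Phi^{m,*}K$, use \eqref{m-th-stein-1} to produce $s,t$ with $r=\tfrac{s+t}{2}$ and $\pmb\xi-v{}_{\bigcdot}\, s,\ \pmb\xi+v{}_{\bigcdot}\, t\in\Pi_\Phi^{m,*}K$, deduce $h_{\Pi_\Phi^m K}(\pmb\xi-v{}_{\bigcdot}\, s)\le1$, $h_{\Pi_\Phi^m K}(\pmb\xi+v{}_{\bigcdot}\, t)\le1$, and then bound $H_{\pmb\xi+v{}_{\bigcdot}\, r}(1)$ for $S_vK$ by the average of the corresponding $K$-integrals.

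Second, and more importantly, the equality analysis is wrong. You conclude $\nabla f_1=\nabla f_2$, hence $f_1-f_2$ constant, hence the midpoint locus is a translate of $v^\perp$. That is too strong and is not even consistent with the claimed equality cases: for a generic origin-symmetric ellipsoid and a non-principal direction $v$, the midpoint hyperplane is an oblique subspace, not $v^\perp$ or a translate of it, so $f_1-f_2$ is linear but certainly not constant. The actual chain of implications from strict convexity is: first subtract the two equality relations to get $\langle f_1\rangle(z)=\langle f_2\rangle(z)$ (here the hypothesis $-s\neq t$ is essential), which says $(f_1-f_2)(z)=\nabla(f_1-f_2)(z)^{\mathrm{T}}{}_{\bigcdot}\, z$; then plug back into the equality to get $s-t=\nabla(f_1-f_2)(z)^{\mathrm{T}}{}_{\bigcdot}\,\pmb\xi$, and since $z$ and $\pmb\xi$ are independent this forces $\nabla(f_1-f_2)(z)=\widehat w$ to be a constant vector (not zero). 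Combined with $\langle f_1\rangle=\langle f_2\rangle$ this gives $f_1-f_2=\widehat w^{\mathrm{T}}{}_{\bigcdot}\, z$, linear but in general nonzero, and the midpoint locus is the subspace $\{z+\tfrac{f_2(z)-f_1(z)}{2}v:z\in P_{v^\perp}K\}$. Your claim that ``strict convexity kills the gradient discrepancy'' conflates ``$\nabla(f_1-f_2)$ is constant'' with ``$\nabla(f_1-f_2)=0$''. (Also, you drop the $z$ term from the midpoint; the midpoint over $z$ is $z+\tfrac12(f_2(z)-f_1(z))v$, not $\tfrac12(f_2(z)-f_1(z))v$.)
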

\begin{proof} It follows from the Ewald-Larman-Rogers theorem \cite[Theorem 4.3]{Ewald Larman Rogers} that $\mathcal{H}^{n-1}(\Xi_{v,K})=0$ holds for almost all $v\in S^{n-1}$. Due to the continuity of $\bar{S}_v$ with respect to $v$ and the continuity of $\Pi_{\Phi}^{m,*}:\mathcal{K}^{n,1}_{(o)}\rightarrow\mathcal{K}_{(o)}^{n,m}$, it is enough to prove \eqref{inequality} for $v\in \sphere$ such that  $\mathcal{H}^{n-1}(\Xi_{v,K})=0$.

To this end, let $\pmb{\xi}+v{}_{\bigcdot}\, r=({\xi}_i+r_iv)_i\in \bar{S}_{v}\Pi_{\Phi}^{m,*}K$ with $\pmb{\xi}\in V^m(v)$ and $r\in M_{1,m}(\mathbb{R})$, where $$\pmb{\xi}=({\xi}_i)_i=(\xi_1, \cdots, \xi_m) \ \ \mathrm{and}\ \  r=(r_1,\cdots,r_m)$$ with $\xi_i\in M_{n, 1}(\R)$ and $r_i\in\mathbb{R}$ for $i=1,\cdots\!,m$. By the definition of $\bar{S}_{v}$, there exist $s\!=\!(s_1,\cdots, s_m)\in M_{1,m}(\mathbb{R})$ and $t\!=\!(t_1,\cdots, t_m)\!\in\!M_{1,m}(\mathbb{R})$ such that \begin{align}
    \pmb{\xi}-v{}_{\bigcdot}\, s\in\Pi_{\Phi}^{m,*}K,\ \  \pmb{\xi}+v{}_{\bigcdot}\,  t \in\Pi_{\Phi}^{m,*}K\ \ \mathrm{and}\ \ r=\frac{ t+s } {2}. \label{radial<1}
\end{align} 

We now claim that $h_{\Pi_{\Phi}^mS_vK}(\pmb{\xi}+v{}_{\bigcdot}\,  r)\leq 1$,  which  yields  $\pmb{\xi}+v{}_{\bigcdot}\,  r\in\Pi_{\Phi}^{m,*}S_vK$, and hence formula \eqref{inequality} holds. It follows from \eqref{radial<1} that \begin{align}
   h_{\Pi_{\Phi}^mK}(\pmb{\xi}-v{}_{\bigcdot}\, s)\leq 1 \ \ \mathrm{and} \ \ h_{\Pi_{\Phi}^mK}(\pmb{\xi}+v{}_{\bigcdot}\, t)\leq 1. \label{radial-<1} 
\end{align} Recall the fact showed in the proof of  Proposition \ref{Proposition-Hx} that, for any $\pmb{x}\in M_{n, m}(\R)$, $H_{\pmb{x}}(0)=0$ and $H_{\pmb{x}}(t)$ is nonnegative and convex on $t\in (0, \infty)$. Then $H_{\pmb{x}}(\bigcdot)$ is increasing on $(0, \infty).$  Together with \eqref{ S k and H n-1}, \eqref{lemma equiv of pro body}, \eqref{nk-f1}, \eqref{nk-f2}, \eqref{nk-f3}, \eqref{partial K to projection formula}, \eqref{radial-<1}, and $\mathcal{H}^{n-1}(\Xi_{v,K})=0$, one has  
\begin{align}\label{1.2} nV_n(K)&=H_{\pmb{\xi}-v{}_{\bigcdot}\,  s}\Big(\frac{1}{h_{\Pi_{\Phi}^mK}(\pmb{\xi}-v{}_{\bigcdot}\, s)}\Big)\nonumber\\ & \geq H_{\pmb{\xi}-v{}_{\bigcdot}\,  s}(1) \nonumber \\  & =\int_{S^{n-1}}\Phi\bigg(\frac{u^{\mathrm{T}}{}_{\bigcdot}\,  (\pmb{\xi}-v{}_{\bigcdot}\,  s)}{h_K(u)}\bigg)h_K(u)dS_K(u)\nonumber\\
& =\int_{\partial K}\Phi\bigg(\frac{{\nu_K(w)}^{\mathrm{T}}{}_{\bigcdot}\,  (\pmb{\xi}-v{}_{\bigcdot}\,  s)}{w\bigcdot \nu_K(w)}\bigg)w\bigcdot \nu_K(w)d\mathcal{H}^{n-1}(w)\nonumber\\
&= \int_{P_{v^{\perp}}K} \bigg[\Phi\bigg(\frac{-\nabla f_1(z)^{\mathrm{T}}{}_{\bigcdot}\, \pmb{\xi}+s}{\langle f_1\rangle(z)}\bigg)\langle f_1\rangle(z) +\Phi\bigg(\frac{-\nabla f_2(z)^{\mathrm{T}}{}_{\bigcdot}\, \pmb{\xi}-s}{\langle f_2\rangle(z)}\bigg)\langle f_2\rangle(z)\bigg] dz.
\end{align} Along the same lines, one gets  
\begin{align}\label{1.1} nV_n(K)\geq  \int_{P_{v^{\perp}}K} \bigg[\Phi\bigg(\frac{-\nabla f_1(z)^{\mathrm{T}}{}_{\bigcdot}\, \pmb{\xi}-t}{\langle f_1\rangle(z)}\bigg)\langle f_1\rangle(z) +\Phi\bigg(\frac{-\nabla f_2(z)^{\mathrm{T}}{}_{\bigcdot}\, \pmb{\xi}+t}{\langle f_2\rangle(z)}\bigg)\langle f_2\rangle(z)\bigg]dz.
\end{align}
It also can be checked from \eqref{nk-f1}, \eqref{nk-f2}, \eqref{nk-f3}, \eqref{stein-sym-1} and \eqref{partial K to projection formula} that 
\begin{align*} 
\!\!\int_{S^{n-1}}\!\!\!\! \Phi\bigg(\!\frac{u^{\mathrm{T}}{}_{\bigcdot}\,  (\pmb{\xi}+v{}_{\bigcdot}\,  r)}{h_{S_vK}(u)}\!\bigg)h_{S_vK}(u)dS_{S_vK}(u) =&\!\int_{P_{v^{\perp}}K}\!\!\!\! \Phi\bigg(\!\frac{-\nabla(f_1\!+\!f_2)(z)^{\mathrm{T}}{}_{\bigcdot}\, \pmb{\xi}-(s+t)}{\langle f_1+f_2\rangle(z)}\!\bigg)\frac{\langle f_1\!+\!f_2\rangle(z)}{2} dz \nonumber \\
&+\!\int_{P_{v^{\perp}}K}\!\!\!\! \Phi\bigg(\frac{-\nabla(f_1\!+\!f_2)(z)^{\mathrm{T}}{}_{\bigcdot}\, \pmb{\xi}+(s+t)}{\langle f_1\!+\! f_2\rangle(z)}\bigg)\frac{\langle f_1\!+\!f_2\rangle(z)}{2}  dz.
\end{align*} 
Let $H_{\pmb{\xi}+v{}_{\bigcdot}\,  r}(\bigcdot)$ be given in \eqref{H-x} with $K$ replaced by $S_v K$.
Note that $\langle f_1+f_2\rangle\!=\!\langle f_1 \rangle+\langle f_2\rangle$. Combining with the convexity of $\Phi$ and the equality above, one gets
\begin{align*}
H_{\pmb{\xi}+v{}_{\bigcdot}\,  r}(1)=&\int_{S^{n-1}}\!\!\!\! \Phi\bigg(\!\frac{u^{\mathrm{T}}{}_{\bigcdot}\,  (\pmb{\xi}+v{}_{\bigcdot}\,  r)}{h_{S_vK}(u)}\!\bigg)h_{S_vK}(u)dS_{S_vK}(u)\\ 
\leq&\frac{1}{2}\int_{P_{v^{\perp}}K}\Phi\bigg(\frac{-\nabla f_1(z)^{\mathrm{T}}{}_{\bigcdot}\, \pmb{\xi}-t}{\langle f_1\rangle(z)}\bigg)\langle f_1\rangle(z) dz+\frac{1}{2}\int_{P_{v^{\perp}}K}\Phi\bigg(\frac{-\nabla f_2(z)^{\mathrm{T}}{}_{\bigcdot}\, \pmb{\xi}-s}{\langle f_2\rangle(z)}\bigg)\langle f_2\rangle(z) dz\\
&+\frac{1}{2}\int_{P_{v^{\perp}}K}\Phi\bigg(\frac{-\nabla f_1(z)^{\mathrm{T}}{}_{\bigcdot}\, \pmb{\xi}+s}{\langle f_1\rangle(z)}\bigg)\langle f_1\rangle(z)dz+\frac{1}{2}\int_{P_{v^{\perp}}K}\Phi\bigg(\frac{-\nabla f_2(z)^{\mathrm{T}}{}_{\bigcdot}\, \pmb{\xi}+t}{\langle f_2\rangle(z)}\bigg)\langle f_2\rangle(z)dz.
\end{align*}
Then, it follows from \eqref{1.2} and \eqref{1.1} that 
\begin{align}\label{equ.}
H_{\pmb{\xi}+v{}_{\bigcdot}\,  r}(1)=
\int_{S^{n-1}}\Phi\bigg(\frac{u^{\mathrm{T}}{}_{\bigcdot}\,  (\pmb{\xi}+v{}_{\bigcdot}\,  r)}{h_{S_vK}(u)}\bigg)h_{S_vK}(u)dS_{S_vK}(u) \leq nV_n(K). 
\end{align} 
Together with the monotonicity of $H_{\pmb{x}}(t)$ on $t\in (0,\infty)$, one gets that  $h_{\Pi_{\Phi}^mS_vK}(\pmb{\xi}+v{}_{\bigcdot}\,  r)\!\leq \!1$, and hence $\pmb{\xi}+v{}_{\bigcdot}\, r\in\Pi_{\Phi}^{m,*}S_vK$. This completes the proof of \eqref{inequality}.

Now let us characterize the equality. Suppose that $\Phi\in\mathcal{C}$ is strictly convex and $\bar{S}_{v}\Pi_{\Phi}^{m,*}K=\Pi_{\Phi}^{m,*}S_vK$. For $\pmb{\xi}\in V^m(v)$, let $\pmb{\xi}-v{}_{\bigcdot}\, s\in \partial\Pi_{\Phi}^{m,*}K$ and $\pmb{\xi}+v{}_{\bigcdot}\, t\in \partial\Pi_{\Phi}^{m,*}K$ with $-s\ne t$.  Thus $$\pmb{\xi}+v{}_{\bigcdot}\,  r\in\partial\bar{S_v}\Pi_{\Phi}^{m,*}K \ \ \mathrm{where} \ \ r=\frac{ t+s } {2}.$$  Consequently, due to $\bar{S}_{v}\Pi_{\Phi}^{m,*}K=\Pi_{\Phi}^{m,*}S_vK$, one has 
\begin{align*}
h_{\Pi_{\Phi}^mK}(\pmb{\xi}-v{}_{\bigcdot}\, s)=h_{\Pi_{\Phi}^mK}(\pmb{\xi}+v{}_{\bigcdot}\, t)=h_{\bar{S}_v\Pi_{\Phi}^mK}(\pmb{\xi}+v{}_{\bigcdot}\,  r)=h_{\Pi_{\Phi}^mS_vK}(\pmb{\xi}+v{}_{\bigcdot}\,  r)=1. 
\end{align*} 
These in particular imply equality in \eqref{equ.}, and hence equalities hold  in \eqref{1.2} and  \eqref{1.1}. Since $\Phi\in\mathcal{C}$ is strictly convex,  
\begin{align}\label{equality-case-1}
  \frac{-\nabla f_1(z)^{\mathrm{T}}{}_{\bigcdot}\, \pmb{\xi}-t}{\langle f_1\rangle(z)}=\frac{-\nabla f_2(z)^{\mathrm{T}}{}_{\bigcdot}\, \pmb{\xi}-s}{\langle f_2\rangle(z)}\ \ \ \mathrm{and} \ \ \ \frac{-\nabla f_1(z)^{\mathrm{T}}{}_{\bigcdot}\, \pmb{\xi}+s}{\langle f_1\rangle(z)}=\frac{-\nabla f_2(z)^{\mathrm{T}}{}_{\bigcdot}\, \pmb{\xi}+t}{\langle f_2\rangle(z)} 
\end{align} hold for almost all $z\in P_{v^{\perp}}K$.  Subtracting the first equation in \eqref{equality-case-1} by the second equation in \eqref{equality-case-1}, one gets, for almost all $z\in P_{v^{\perp}}K$  
$$\frac{s+t}{\langle f_1\rangle(z)}=\frac{s+t}{\langle f_2\rangle(z)}.$$ As $s\neq -t$, one must have,   for almost all $z
\in P_{v^{\perp}}K$,  \begin{align}\label{equality-case-2}
\langle f_1\rangle(z)=\langle f_2\rangle(z),\ \ 
\mathrm{i.e.,}\ \ (f_1-f_2)(z)=\nabla(f_1-f_2)(z)^{\mathrm{T}}{}_{\bigcdot}\, z.
\end{align} 
Combining with \eqref{equality-case-1}, one gets
$s-t=\nabla (f_1-f_2)(z)^{\mathrm{T}}{}_{\bigcdot}\, \pmb{\xi}$ holds for almost all $z
\in P_{v^{\perp}}K$, 
which further yields that, for any $i=1,\cdots, m$, 
\begin{align}\label{nabla-f1-f2}
s_i-t_i=\nabla (f_1-f_2)(z)\bigcdot \xi_i. 
\end{align}

Note that $z$ and $\pmb{\xi}$ are independent of each other. Thus, for almost all $z
\in P_{v^{\perp}}K$,  if $\xi_i\neq o$, \eqref{nabla-f1-f2} asserts that $\nabla (f_1-f_2)(z)$ lies in a hyperplane with normal direction $\frac{\xi_i}{|\xi_i|}$. As $\pmb{\xi}\in V^{m}(v)$ is arbitrary, one can indeed get $\nabla (f_1-f_2)(z)=\widehat{w}$ for almost all $z\in P_{v^{\perp}}K$, where $\widehat{w}\in M_{n,1}(\mathbb{R})$ is independent of $z.$ It follows from \eqref{equality-case-2} that, for almost all $z\in P_{v^{\perp}}K$,
\begin{align*}
f_1(z)-f_2 (z)={\widehat{w}}^{\mathrm{T}}{}_{\bigcdot}\, z,
\end{align*}
and hence, obviously it holds for all $z\in P_{v^{\perp}}K$. Thus, the midpoints of the chords of $K$ parallel to $v$ lie in the subspace:
$$\Big\{z+  \frac{{\widehat{w}}^{\mathrm{T}}{}_{\bigcdot}\, z}{2} v: z\in P_{v^{\perp}}K\Big\}.$$ 
This completes the proof.
\end{proof}

 We are now in the position to prove the  higher-order Orlicz-Petty projection inequality.  Let \begin{align*}
  \Gamma_{\Phi}(K)=\frac{V_{nm}(\Pi_{\Phi}^{m,*}K)}{V_{n}(K)^m}\ \  \mathrm{for}\ \  K\in\mathcal{K}_{(o)}^{n,1}.  
 \end{align*}
 
 \begin{theorem}\label{Main-Theory-1}
Let $\Phi\in\mathcal{C}.$ Then, among $K\in\mathcal{K}_{(o)}^{n,1}$, $\Gamma_{\Phi}(K)$ 
is maximized at origin-symmetric ellipsoids, i.e., \eqref{Main-Theory-1-ineuqlity} holds: 
\begin{align*}
\Gamma_{\Phi}(K)\leq \Gamma_{\Phi}(\ball)\ \ \mathrm{for\ all}\ \ K\in \mathcal{K}_{(o)}^{n,1}.
\end{align*}  If $\Phi$ is strictly convex, then the origin-symmetric ellipsoids are the only maximizers.
\end{theorem}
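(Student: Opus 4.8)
The plan is to run the classical Steiner-symmetrization scheme, now fed by Proposition~\ref{key} and the Fiber-symmetrization volume bound \eqref{vnm l} in place of their $L_p$ analogues, and to extract the equality case from the equality clause of Proposition~\ref{key}. The first thing I would record is that $\Gamma_\Phi$ is invariant under $SL(n)$ and under dilations. For $A\in SL(n)$, Proposition~\ref{affine inveri} together with the polar identity $(A^{-\mathrm T}{}_{\bigcdot}\, L)^*=A{}_{\bigcdot}\, L^*$ gives $\Pi_{\Phi}^{m,*}(A{}_{\bigcdot}\, K)=A{}_{\bigcdot}\,\Pi_{\Phi}^{m,*}K$; the columnwise action of $A$ on $M_{n,m}(\mathbb R)$ has Jacobian $(\det A)^m=1$ and $V_n(A{}_{\bigcdot}\, K)=V_n(K)$, so $\Gamma_\Phi(A{}_{\bigcdot}\, K)=\Gamma_\Phi(K)$. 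Plugging $h_{cK}=ch_K$, $S_{cK}=c^{n-1}S_K$ and $V_n(cK)=c^nV_n(K)$ into Definition~\ref{supp-pro} gives $\Pi_\Phi^m(cK)=c^{-1}\Pi_\Phi^m K$ for $c>0$, hence $\Gamma_\Phi(cK)=\Gamma_\Phi(K)$. Writing an origin-symmetric ellipsoid as $E=r\,(A'{}_{\bigcdot}\,\ball)$ with $r>0$ and $A'\in SL(n)$, these two facts give $\Gamma_\Phi(E)=\Gamma_\Phi(\ball)$ and reduce the theorem to the single inequality $\Gamma_\Phi(K)\le\Gamma_\Phi(\ball)$.

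Next I would establish that $\Gamma_\Phi$ is nondecreasing under Steiner symmetrization. Fix $v\in S^{n-1}$; since $o\in\mathrm{int}\,K$ one checks $S_vK\in\mathcal K_{(o)}^{n,1}$, and then chaining \eqref{vnm l} (with $\pmb K=\Pi_{\Phi}^{m,*}K$) and the inclusion \eqref{inequality} of Proposition~\ref{key} yields
\[
V_{nm}\big(\Pi_{\Phi}^{m,*}K\big)\ \le\ V_{nm}\big(\bar{S}_v\Pi_{\Phi}^{m,*}K\big)\ \le\ V_{nm}\big(\Pi_{\Phi}^{m,*}S_vK\big),
\]
which, combined with $V_n(S_vK)=V_n(K)$ from \eqref{volume-pres}, gives $\Gamma_\Phi(K)\le\Gamma_\Phi(S_vK)$. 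By the classical fact that a suitable sequence of iterated Steiner symmetrizations of $K$ converges in the Hausdorff metric to the centered ball $r\ball$ of the same volume, $r=(V_n(K)/V_n(\ball))^{1/n}$ (see, e.g., \cite{schneider}), and by the continuity of $\Pi_\Phi^m$ (Proposition~\ref{continuity}), of the polar map and of volume -- hence of $\Gamma_\Phi$ on $\mathcal K_{(o)}^{n,1}$ -- we obtain $\Gamma_\Phi(K)\le\Gamma_\Phi(r\ball)=\Gamma_\Phi(\ball)$, which is \eqref{Main-Theory-1-ineuqlity}.

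For the equality case, suppose $\Phi$ is strictly convex and $\Gamma_\Phi(K)=\Gamma_\Phi(\ball)$. Since, by the inequality just proved applied to both $K$ and $S_vK$, $\Gamma_\Phi(K)\le\Gamma_\Phi(S_vK)\le\Gamma_\Phi(\ball)=\Gamma_\Phi(K)$ for every $v\in S^{n-1}$, we get $\Gamma_\Phi(S_vK)=\Gamma_\Phi(K)$, i.e. $V_{nm}(\Pi_{\Phi}^{m,*}S_vK)=V_{nm}(\Pi_{\Phi}^{m,*}K)$; the displayed sandwich then forces $\bar{S}_v\Pi_{\Phi}^{m,*}K=\Pi_{\Phi}^{m,*}S_vK$ (two convex bodies of equal volume, one contained in the other). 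For $\mathcal H^{n-1}$-almost every $v$ one has $\mathcal H^{n-1}(\Xi_{v,K})=0$ by the Ewald--Larman--Rogers theorem \cite{Ewald Larman Rogers}, so the equality clause of Proposition~\ref{key} applies and places the midpoints of the chords of $K$ parallel to $v$ in a \emph{linear} subspace of $M_{n,1}(\mathbb R)$; in particular the $v$-chord of $K$ through $o$ is bisected by $o$. Letting $v$ range over a.e. direction and using the continuity of the radial function of $K$ gives $K=-K$, while the flatness of all of these chord-midpoint loci identifies $K$ as an ellipsoid through the classical characterization of ellipsoids (see, e.g., \cite{Geometric tomography}). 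Hence $K$ is an origin-symmetric ellipsoid, as claimed.

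The inequality itself is short once Proposition~\ref{key} and \eqref{vnm l} are in hand; the delicate part I expect is the equality case, namely upgrading the almost-everywhere conclusion of Proposition~\ref{key} to a global statement, which hinges on the classical chord-midpoint characterization of ellipsoids together with a continuity/density argument to remove the "almost every''. A minor but necessary check is that every Steiner symmetrization (hence every iterate in the approximating sequence) keeps the origin in the interior, so that Proposition~\ref{key} and the continuity of $\Pi_\Phi^m$ remain applicable at each stage.
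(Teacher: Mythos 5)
Your proposal is correct and follows the same route as the paper: the monotonicity $\Gamma_\Phi(K)\le\Gamma_\Phi(S_vK)$ obtained by chaining \eqref{vnm l} with Proposition~\ref{key}, iterated Steiner symmetrizations converging to a ball plus the continuity of $\Pi_\Phi^m$ (Proposition~\ref{continuity}) to reach $\Gamma_\Phi(K)\le\Gamma_\Phi(B_2^n)$, and the equality case obtained by forcing $\bar{S}_v\Pi_\Phi^{m,*}K=\Pi_\Phi^{m,*}S_vK$ for all $v$ and then invoking the equality clause of Proposition~\ref{key} together with the chord-midpoint characterization of ellipsoids. You are somewhat more explicit than the paper about the $SL(n)$/dilation invariance reduction and about the need to pass from the almost-every-$v$ conclusion of Proposition~\ref{key} to all $v$ by density and continuity before applying the ellipsoid characterization; the paper essentially waves over that last point, citing \cite[Theorem 10.2.1]{schneider} directly, so your added care there is a genuine (if small) improvement in rigor rather than a divergence in method.
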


\begin{proof} 
It follows from \eqref{volume-pres}, \eqref{vnm l} and  Proposition \ref{key} that, for any $v\in \sphere$, 
\begin{align}\label{result-inequality}
\frac{V_{nm}(\Pi_{\Phi}^{m,*}K)}{V_{n}(K)^m}
\leq\frac{V_{nm}(\bar{S}_v\Pi_{\Phi}^{m,*}K)}{V_{n}(S_vK)^m}
\leq\frac{V_{nm}(\Pi_{\Phi}^{m,*}S_vK)}{V_{n}(S_vK)^m}.
\end{align} 
Let $\{v_j\}_{j\in\mathbb{N}}$ be a sequence in $S^{n-1}$,  $K_1=S_{v_1}K$  and 
  $K_{j+1}=S_{v_{j+1}}K_j$ for $j\in \mathbb{N}$  such that   $K_j\rightarrow cB_2^n$ with  $ c= \big(\frac{V_n(K)}{V_n(B^n_2)}\big)^{\frac{1}{n}}$. It follows from \eqref{result-inequality}  that      
\begin{align}\label{chain-inequalities}
\frac{V_{nm}(\Pi_{\Phi}^{m,*}K)}{V_{n}(K)^m}
\leq\frac{V_{nm}(\Pi_{\Phi}^{m,*}{K_1})}{V_{n}(K_1)^m}
\leq\cdots\leq\frac{V_{nm}(\Pi_{\Phi}^{m,*}{K_j})}{V_{n}(K_j)^m}.
\end{align} By Proposition \ref{continuity}, one has  $$ \lim_{j\rightarrow\infty} \frac{V_{nm}(\Pi_{\Phi}^{m,*}{K_j})}{V_{n}(K_j)^m}=\frac{V_{nm}(\Pi_{\Phi}^{m,*}cB^n_2)}{V_{n}(cB^n_2)^m}=\frac{V_{nm}(\Pi_{\Phi}^{m,*}B^n_2)}{V_{n}(B^n_2)^m},$$    
where the last equality follows from    $\Pi_{\Phi}^{m,*}cK=c\Pi_{\Phi}^{m,*}K$, an easy consequence of   \eqref{hom-1-1}, Definition \ref{supp-pro}  and the fact that $S_{cK}=c^{n-1}S_K$ for $c>0$. Together with \eqref{chain-inequalities}, one gets the following inequality: 
\begin{align}\label{chain-inequalities-1}
\frac{V_{nm}(\Pi_{\Phi}^{m,*}K)}{V_{n}(K)^m}
\leq \frac{V_{nm}(\Pi_{\Phi}^{m,*}B^n_2)}{V_{n}(B^n_2)^m}.\end{align} By \eqref{affine inv}, for any $A\in SL(n)$,  \begin{align*} 
V_{nm}(\Pi_{\Phi}^{m}A{}_{\bigcdot}\, K) = V_{nm}(\Pi_{\Phi}^{m}K).
\end{align*} Combining with  \eqref{chain-inequalities-1}, one gets  $\Gamma_{\Phi}(K)$ is maximized when $K$ is an origin-symmetric ellipsoid.

We now characterize the equality.  
Suppose that $\Phi\in\mathcal{C}$ is strictly convex and equality holds in \eqref{chain-inequalities-1}. Then equality must hold in \eqref{result-inequality} for any $v\in \sphere$, implying that  $$V_{nm}(\bar{S}_{v}\Pi_{\Phi}^{m,*}K)=V_{nm}(\Pi_{\Phi}^{m,*}S_vK).$$ Thus, $ \bar{S}_{v}\Pi_{\Phi}^{m,*}K = \Pi_{\Phi}^{m,*}S_vK $ holds for all $v\in \sphere.$ According to  
Proposition \ref{key},  all the midpoints of the chords of $K$ parallel to $v$ lie in a subspace of $M_{n,1}(\mathbb{R})$, and thus, $K$ is an origin-symmetric ellipsoid (see e.g., \cite[Theorem 10.2.1]{schneider}). This completes the proof.
\end{proof}

\section{Higher-order Orlicz-Petty projection inequality when $\Phi\!=\!\phi\!\circ\!h_Q$}\label{sec-characterization-equality}
In Theorem \ref{Main-Theory-1}, the equality case of the higher-order Orlicz-Petty projection inequality is characterized when $\Phi\in \mathcal{C}$ is strictly convex. However, many commonly used convex functions are not necessarily strictly convex, for example, $\Phi_Q=\phi(h_Q)$ where $Q\in\mathcal{K}_{o}^{1,m}$ and  $\phi:[0, \infty)\rightarrow [0,\infty)$ is a convex function such that $\phi(0)=0$ and $\phi$ is strictly increasing on $[0,\infty)$. The main goal in this section is to provide the higher-order Orlicz-Petty projection inequality for $\Phi_Q=\phi\circ h_Q$, and characterize the corresponding equality. The following theorem from Gruber \cite[P.73]{Gruber-1974} will be used.
\begin{theorem}\label{Gruber-theorem-3}
Let $K\in\mathcal{K}^{n,1}$. Then $K$ is an ellipsoid if and only if it satisfies that, for any family of parallel chords of $K$, there exists an $r\in(0,1)$ such that the points, which divide the (directed) chords of $K$ in the proportion $r: 1-r$, lie in a hyperplane.
\end{theorem}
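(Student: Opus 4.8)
The \emph{only if} direction is routine: both the hypothesis and the conclusion ``$K$ is an ellipsoid'' are preserved by invertible affine maps, since these carry hyperplanes to hyperplanes and preserve the ratio in which a point divides a segment, so we may take $K=\ball$; for the ball the midpoint ($r=\tfrac12$) of every chord parallel to a direction $u$ lies on the diametral hyperplane $u^{\perp}$, and transporting this back along the affine map gives the claim.

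For the \emph{if} direction I would begin with a width-function identity. Fix $u\in\sphere$, write points of $M_{n,1}(\mathbb{R})$ as $z+\tau u$ with $z\in u^{\perp}$, set $P:=P_{u^{\perp}}K$, and let $\alpha$ (convex) and $\beta$ (concave) be the lower and upper height functions on $P$, so that $K=\{z+\tau u:z\in P,\ \alpha(z)\le\tau\le\beta(z)\}$ and the width $w:=\beta-\alpha$ is concave, nonnegative, and vanishes exactly where the chord parallel to $u$ degenerates to a point. The chords parallel to $u$ are the segments $C_z$, $z\in P$; directed along $u$, the point dividing $C_z$ in proportion $r:(1-r)$ sits at height $(1-r)\alpha(z)+r\beta(z)=\alpha(z)+r\,w(z)$. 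Since these points project onto all of $P$, the hyperplane $H_u$ containing them (given by the hypothesis) cannot contain the direction $u$, so there are an affine function $D_u$ on $P$ and a number $r(u)\in(0,1)$ with $\alpha+r(u)\,w=D_u$ on $P$. Because the midpoint of $C_z$ sits at height $\tfrac12(\alpha(z)+\beta(z))=\alpha(z)+\tfrac12 w(z)=D_u(z)+\bigl(\tfrac12-r(u)\bigr)w(z)$, the midpoints of the chords parallel to $u$ lie in a hyperplane \emph{exactly when} $\bigl(\tfrac12-r(u)\bigr)w$ is affine, i.e.\ when $r(u)=\tfrac12$ or $w$ is affine; and if $w$ is affine then $\alpha=D_u-r(u)w$ and $\beta=\alpha+w$ are affine, so the midpoints again lie in a hyperplane.

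Granting this, two steps remain. \emph{Step 1: reduction to the midpoint case.} If for every direction $u$ with nonaffine width one had $r(u)=\tfrac12$, then the midpoints of \emph{every} family of parallel chords of $K$ would lie in a hyperplane, whence $K$ is an ellipsoid by the classical characterisation of ellipsoids via planar midpoint loci, \cite[Theorem~10.2.1]{schneider}. So it suffices to exclude that $w_{u_0}$ is nonaffine while $r(u_0)\neq\tfrac12$ for some $u_0$ --- exactly the behaviour of ``bi-paraboloid''-type bodies, which enjoy the division property for a single direction yet are not ellipsoids. \emph{Step 2: reduction to the plane.} For a generic $2$-plane $L$ meeting $\mathrm{int}\,K$ (the exceptional $L$, for which $L\subseteq H_v$ for some direction $v$ parallel to $L$, form a set of strictly smaller dimension), the chords of $K\cap L$ parallel to a direction $v\in L$ are exactly the chords of $K$ parallel to $v$ lying in $L$, so their $r(v)$-division points lie in the line $H_v\cap L$; thus $K\cap L$ inherits the division property within $L$ with the same ratios. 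Choosing such an $L$ through the direction $u_0$ (generic enough that the width of $K\cap L$ in direction $u_0$ remains nonaffine) yields a planar convex body for which the midpoint property fails in direction $u_0$, so $K\cap L$ is not an ellipse. Hence everything comes down to the planar statement \emph{a planar convex body with the division property is an ellipse}; granting it, a dense --- hence (by continuity of sections and closedness of the ellipse condition) every --- $2$-dimensional section of $K$ through an interior point is an ellipse, so $K$ is an ellipsoid by the classical fact that a convex body all of whose planar sections are ellipses is an ellipsoid.

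The main obstacle is this planar statement, which I would treat in two regimes. Convex polygons are eliminated directly: for a direction $u$ not perpendicular to any edge, $(1-r)\alpha_u+r\beta_u$ inherits an uncancellable corner wherever $\alpha_u$ has a corner and $\beta_u$ is smooth there, or conversely, so for all but finitely many directions no admissible $r$ exists, contradicting the hypothesis. The genuinely hard regime is the strictly convex one: there $w_u$ is concave, nonnegative and vanishes at both endpoints of each shadow interval, hence is nonaffine, so the division ratio $r(u)$ is uniquely determined for every direction and one must prove $r(u)\equiv\tfrac12$; this requires comparing the division data across different directions and is precisely the point carried out by Gruber in \cite[p.~73]{Gruber-1974}. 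The bulk of the work --- and any care needed to make the generic choice of slicing plane precise --- lives here.
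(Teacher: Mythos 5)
A point of comparison first: the paper does not prove this statement at all --- it is imported verbatim from Gruber \cite[p.~73]{Gruber-1974} and used as a black box, so there is no internal argument to measure you against. Judged as a standalone proof, your proposal has a genuine gap, and you name it yourself. Your preliminary observations are fine: the only-if direction by affine invariance; the identity $\alpha+r(u)\,w_u=D_u$ forced by the hypothesis (with $H_u$ not containing the direction $u$); and the reduction of the problem to excluding a direction $u_0$ with nonaffine width and $r(u_0)\neq\tfrac12$. But after the (only sketched) genericity argument for slicing planes, you reduce everything to the planar statement ``a planar convex body with the division property is an ellipse'' and then explicitly defer its strictly convex case --- comparing the division data across different directions to force $r\equiv\tfrac12$ --- to Gruber's paper. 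That case \emph{is} the content of the theorem: the midpoint case ($r\equiv\tfrac12$) is already the classical Bertrand--Brunn characterization \cite[Theorem 10.2.1]{schneider}, which the paper invokes separately, and the sole reason Theorem \ref{Gruber-theorem-3} is needed is to handle unknown, possibly direction-dependent ratios. Deferring exactly that step to \cite{Gruber-1974} is circular as a proof of Gruber's theorem, so what you have is a (reasonable) reduction plus the same citation the paper makes, not a proof.

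Two secondary problems would also need repair even if the core were supplied. Your planar dichotomy ``polygon versus strictly convex'' is not exhaustive: a planar convex body may have boundary segments without being a polygon, and such bodies are covered by neither of your two regimes. And the sectioning step needs more care than ``generic $L$'': for a fixed $2$-plane $L$ you must avoid $L\subseteq H_v$ simultaneously for the whole one-parameter family of directions $v\in L$ (not just for one $v$), you must check that nonaffineness of the width in direction $u_0$ survives passing to $K\cap L$, and the final step rests on two further cited classical facts (limits of ellipses, and the ellipsoid characterization via elliptical sections through a point). None of these is fatal in the way the missing planar core is, but as written the argument establishes only the easy reductions surrounding Gruber's theorem, not the theorem itself.
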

As claimed in Section \ref{sec-definition}, $\Phi_Q \in \mathcal{C}.$ For $K\in\mathcal{K}_{(o)}^{n,1}$,
let $\Pi_{\phi, Q}^{m}K$ be defined through its support function $h_{\Pi_{\phi, Q}^{m}K}$ in \eqref{specical-case-1}. An immediate consequence of Theorem \ref{Main-Theory-1} is the following higher-order Orlicz-Petty projection inequality corresponding to $\Pi_{\phi, Q}^{m, *}K=(\Pi_{\phi, Q}^{m}K)^*$. For convenience, let $\Gamma_{\phi, Q}: \mathcal{K}_{(o)}^{n,1}\rightarrow [0, \infty)$ be defined by  \begin{align*}\Gamma_{\phi, Q}(K)=
\frac{V_{nm}(\Pi_{\phi, Q}^{m, *}K)}{V_{n}(K)^m}\ \ \mathrm{for}\ \ K\in \mathcal{K}_{(o)}^{n,1}.
\end{align*}   

\begin{theorem} \label{Phi-Q-strictly convex}
Let  $Q\in\mathcal{K}_{o}^{1,m}$ and  $\phi:[0, \infty)\rightarrow [0,\infty)$ be a convex function such that $\phi(0)=0$ and $\phi$ is strictly increasing on $[0,\infty)$.  Then, among $K\in \mathcal{K}_{(o)}^{n,1}$, $\Gamma_{\phi, Q}(K)$ is maximized at origin-symmetric ellipsoids, i.e., \begin{align} \label{inequality-Phi-Q}
\Gamma_{\phi, Q}(K)\leq \Gamma_{\phi, Q}(\ball)\ \ \mathrm{for\ all}\ \ \mathcal{K}_{(o)}^{n,1}.\end{align}
 
If in addition $\phi$ is strictly convex on $[0,\infty)$, then the origin-symmetric ellipsoids are the only maximizers of  $\Gamma_{\phi, Q}(K)$. 
\end{theorem}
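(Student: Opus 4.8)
The inequality \eqref{inequality-Phi-Q} requires no new work: since $\Phi_Q=\phi\circ h_Q\in\mathcal{C}$ (Section~\ref{sec-definition}) and $\Pi_{\phi,Q}^{m}K=\Pi_{\Phi_Q}^{m}K$ by definition, Theorem~\ref{Main-Theory-1} applied with $\Phi=\Phi_Q$ gives $\Gamma_{\phi,Q}(K)=\Gamma_{\Phi_Q}(K)\le\Gamma_{\Phi_Q}(B_2^n)=\Gamma_{\phi,Q}(B_2^n)$ for every $K\in\mathcal{K}_{(o)}^{n,1}$; equality holds on origin-symmetric ellipsoids because $\Pi_{\phi,Q}^{m,*}$ is $SL(n)$-covariant and behaves homogeneously under dilations (Proposition~\ref{affine inveri} together with the dilation identity $\Pi_{\Phi}^{m,*}(cK)=c\,\Pi_{\Phi}^{m,*}K$ recorded in the proof of Theorem~\ref{Main-Theory-1}), while $V_n$ is $SL(n)$-invariant and homogeneous. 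So the real content is the equality characterization when, in addition, $\phi$ is strictly convex; here one cannot simply invoke the corresponding clause of Theorem~\ref{Main-Theory-1}, since $\Phi_Q$ itself need not be strictly convex.

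Assume $\phi$ is strictly convex and $\Gamma_{\phi,Q}(K)=\Gamma_{\phi,Q}(B_2^n)$. Exactly as in the proof of Theorem~\ref{Main-Theory-1}, equality then must hold in \eqref{result-inequality} for every $v\in S^{n-1}$, and together with the inclusion \eqref{inequality} of Proposition~\ref{key} this forces $\bar S_v\Pi_{\phi,Q}^{m,*}K=\Pi_{\phi,Q}^{m,*}S_vK$ for all $v\in S^{n-1}$. The plan is to rerun the equality analysis of Proposition~\ref{key} with $\Phi=\Phi_Q$: equality in \eqref{equ.} still propagates to equality in the two Jensen inequalities used there, but the step ``strict convexity of $\Phi$ $\Rightarrow$ the two arguments agree'' is no longer available. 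Instead, for $\Phi_Q=\phi\circ h_Q$, equality in an instance of $\phi\big(h_Q(\lambda a_1+(1-\lambda)a_2)\big)=\lambda\phi\big(h_Q(a_1)\big)+(1-\lambda)\phi\big(h_Q(a_2)\big)$, with $\lambda=\frac{\langle f_1\rangle(z)}{\langle f_1\rangle(z)+\langle f_2\rangle(z)}\in(0,1)$, is unwound using that $h_Q$ is sublinear and $\phi$ is increasing and strictly convex: the sandwich $\phi(h_Q(\lambda a_1+(1-\lambda)a_2))\le\phi(\lambda h_Q(a_1)+(1-\lambda)h_Q(a_2))\le\lambda\phi(h_Q(a_1))+(1-\lambda)\phi(h_Q(a_2))$ must be an equality, so strict convexity of $\phi$ gives $h_Q(a_1)=h_Q(a_2)$ and injectivity of $\phi$ gives $h_Q(\lambda a_1+(1-\lambda)a_2)=h_Q(a_1)$; since a convex function on a segment attaining one value at both endpoints and at some interior point is constant on that segment, $h_Q$ is constant on $[a_1,a_2]$. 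Reading this back (with the $1$-homogeneity of $h_Q$) into the notation of Proposition~\ref{key}: for almost every $z\in P_{v^\perp}K$ and every admissible $\pmb{\xi}\in V^m(v)$ with its associated $s,t\in M_{1,m}(\mathbb{R})$,
\[
\langle f_2\rangle(z)\,h_Q\big(-\nabla f_1(z)^{\mathrm T}{}_{\bigcdot}\,\pmb{\xi}-t\big)=\langle f_1\rangle(z)\,h_Q\big(-\nabla f_2(z)^{\mathrm T}{}_{\bigcdot}\,\pmb{\xi}-s\big),
\]
and the same identity after replacing $(s,t)$ by $(-s,-t)$.

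The crux is to deduce from these identities that $K$ is an ellipsoid. The plan is to exploit the full freedom of $\pmb{\xi}$ over $V^m(v)$ (with $s=s(\pmb{\xi})$, $t=t(\pmb{\xi})$ tied to $\partial\Pi_{\phi,Q}^{m,*}K$) together with the hypothesis $o\in Q$ to produce an $r_v\in(0,1)$ for which $r_v\nabla f_2(z)-(1-r_v)\nabla f_1(z)$ is constant for a.e.\ $z$, equivalently $z\mapsto r_v f_2(z)-(1-r_v)f_1(z)$ is affine on $P_{v^\perp}K$; thus the points dividing the chords of $K$ parallel to $v$ in the ratio $r_v:1-r_v$ lie in a hyperplane, and, just as in the proof of Proposition~\ref{key} where the relevant affine function turned out to be linear, a refinement of the same relations should show this hyperplane contains $o$. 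Since this holds for every $v\in S^{n-1}$, Gruber's Theorem~\ref{Gruber-theorem-3} gives that $K$ is an ellipsoid, and the incidence with the origin upgrades this to an origin-symmetric ellipsoid; combined with the affine invariance this completes the characterization, the converse being the equality case already noted in the first paragraph.

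I expect the third paragraph to be the main obstacle. In Theorem~\ref{Main-Theory-1} strict convexity of $\Phi$ at once equates $\nabla f_1$ with $\nabla f_2$ and $\langle f_1\rangle$ with $\langle f_2\rangle$; here $h_Q$ fails to be injective, so $h_Q(a_1)=h_Q(a_2)$ only places $a_1,a_2$ in a common face of the sublevel set $\{h_Q\le c\}$, and recovering the affine relation with the correct proportion $r_v$ (and the incidence with the origin) will require a careful use of the boundary data $s(\pmb{\xi}),t(\pmb{\xi})$, a separate treatment of the degenerate subcase in which $[a_1,a_2]$ collapses to a point (which reproduces the Theorem~\ref{Main-Theory-1} conclusion directly), and some attention to the cases where $Q$ is lower-dimensional or $o\in\partial Q$.
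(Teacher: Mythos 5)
Your first paragraph is correct, and your second paragraph correctly identifies the central difficulty (strict convexity of $\phi$ does not give strict convexity of $\Phi_Q$) and correctly unwinds the Jensen equality through the sandwich
\[
\phi\big(h_Q(\lambda a_1+(1-\lambda)a_2)\big)\le\phi\big(\lambda h_Q(a_1)+(1-\lambda)h_Q(a_2)\big)\le\lambda\phi\big(h_Q(a_1)\big)+(1-\lambda)\phi\big(h_Q(a_2)\big),
\]
arriving (via homogeneity of $h_Q$) at the relation $\langle f_2\rangle(z)\,h_Q\big(-\nabla f_1(z)^{\mathrm{T}}{}_{\bigcdot}\,\pmb{\xi}-t\big)=\langle f_1\rangle(z)\,h_Q\big(-\nabla f_2(z)^{\mathrm{T}}{}_{\bigcdot}\,\pmb{\xi}-s\big)$. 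But from that point on your write-up is, by your own admission, only a plan, and the plan in fact does not close. The identity above places $a_1$ and $a_2$ only on a common level set of $h_Q$ in $M_{1,m}(\mathbb{R})$, and since $h_Q$ is degenerate in all directions tangent to that level set, there is no way to extract from it either a determinate proportionality $\langle f_2\rangle=c\,\langle f_1\rangle$ (with $c$ independent of $z$) or the affine relation $r_v f_2-(1-r_v)f_1$ linear; varying $\pmb{\xi}$ freely does not help, because $s=s(\pmb{\xi})$ and $t=t(\pmb{\xi})$ shift along with $\pmb{\xi}$ in a way that keeps the arguments of $h_Q$ inside the same level set without pinning them down.

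The missing idea, which is the engine of the paper's proof, is a \emph{dimensional reduction to $m=1$}. Using the rank-one projection $P=\iota^{\mathrm{T}}{}_{\bigcdot}\,\kappa$ of \cite[Proposition 2.1]{Haddad-ye-lp-2025} with $Q{}_{\bigcdot}\, P\subset Q$ a segment through the origin, one sets $\psi=\phi\circ h_{Q{}_{\bigcdot}\,\iota^{\mathrm{T}}}$, shows via \eqref{h KB} that $\Pi_\psi\widetilde K=(\Pi_{\phi,Q}^m\widetilde K){}_{\bigcdot}\,\iota^{\mathrm{T}}$, and deduces $S_v\Pi_\psi^*\widetilde K=\Pi_\psi^*S_v\widetilde K$ from the $m$-th order Fiber equality. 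This replaces $h_Q$ on $M_{1,m}(\mathbb{R})$ by the piecewise-linear one-variable function $h_{Q{}_{\bigcdot}\,\iota^{\mathrm{T}}}(\tau)=\tau h_Q(\iota)$ for $\tau\ge 0$, which \emph{is} injective on $[0,\infty)$. Setting $\xi=o$ with corresponding $s_0,t_0>0$ then forces $\langle\widetilde f_2\rangle=\frac{t_0}{s_0}\langle\widetilde f_1\rangle$, and a careful boundedness argument near $\xi=o$ (cutting $P_{v^\perp}\widetilde K$ by a nest of compacts to control $|\nabla\widetilde f_j|$) shows $\nabla(\widetilde f_2-\frac{t_0}{s_0}\widetilde f_1)$ is constant, so that the $r_v:1-r_v$ dividing points lie in a hyperplane with $r_v=\frac{t_0/s_0}{1+t_0/s_0}$. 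Gruber's theorem then yields that $\widetilde K$ is an ellipsoid, which in turn forces $r_v=\frac12$ and hence $t_0=s_0$, giving origin symmetry. Without this reduction your identities remain $m$-dimensional level-set constraints and the argument does not terminate; in its current form the proposal has a genuine gap at exactly the step you flag as ``the main obstacle.''
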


\begin{proof} The inequality \eqref{inequality-Phi-Q} clearly holds by Theorem \ref{Main-Theory-1}.  We shall only prove the characterization of equality. To this end, let $\phi$ be strictly convex on $[0,\infty)$. Suppose that $\widetilde{K}\in \mathcal{K}_{(o)}^{n,1}$ maximizes $\Gamma_{\phi, Q}(K)$ among $K\in \mathcal{K}^{n,1}_{(o)}$. 
Similar to the proof of Proposition \ref{key}, we only consider those $v\in \sphere$ such that $\mathcal{H}^{n-1}(\Xi_{v,\widetilde{K}})=0$. Recall that the spherical measure of $\big\{ v\in \sphere: \mathcal{H}^{n-1}(\Xi_{v,\widetilde{K}})\ne 0\big\}$ is $0$. By \eqref{result-inequality}, for almost all $v\in\sphere$, we have
$$V_{nm}(\bar{S}_v\Pi_{\phi, Q}^{m, *}\widetilde{K})=V_{nm}(\Pi_{\phi, Q}^{m, *}S_v\widetilde{K}).$$ 
It follows from Proposition \ref{key} that 
\begin{align}\label{equal-6.1-1}
\bar{S}_v\Pi_{\phi, Q}^{m, *}\widetilde{K}=\Pi_{\phi, Q}^{m, *}S_v\widetilde{K}.
\end{align} It has been proved in \cite[Proposition 2.1]{Haddad-ye-lp-2025} that, for   $Q\in\mathcal{K}_o^{1,m}$, there exists a projection matrix $P=\iota^{\mathrm{T}}{}_{\bigcdot}\, \kappa$ with $\iota, \kappa\in M_{1,m}(\mathbb{R})$  such that $Q{}_{\bigcdot}\, P$ lies in a one-dimensional linear subspace  and $Q{}_{\bigcdot}\, P\subset Q$.  

Note that $Q{}_{\bigcdot}\, \iota^{\mathrm{T}} $ is the segment in $\mathbb{R}$ with endpoints $-h_Q(-\iota)$ and $h_Q(\iota)$. As $o\in Q$ and $Q$ has nonempty interior, at least one of $h_Q(\iota)$ and $h_Q(-\iota)$ must be positive. Without loss of generality, assume that $h_Q(\iota)>0$. By \eqref{hom-1-1} and \eqref{h KB}, it can be checked that the function $\phi\circ h_{Q{}_{\bigcdot}\, \iota^{\mathrm{T}}}:\mathbb{R}\rightarrow [0,\infty)$ can be formulated by  
\begin{align}
\label{positive-def-p-q}&\big(\phi\circ h_{Q{}_{\bigcdot}\, \iota^{\mathrm{T}}}\big)(\tau)=\big(\phi\circ h_{Q}\big)(\tau{}_{\bigcdot}\, \iota) =\phi\big(\tau  h_Q(\iota)\big) \ \ \mathrm{for}\ \ \tau\geq 0; \\  &\big(\phi\circ h_{Q{}_{\bigcdot}\, \iota^{\mathrm{T}}}\big)(\tau)=\big(\phi\circ h_{Q}\big)(\tau{}_{\bigcdot}\, \iota) =\phi\big(\!\!-\tau h_Q(-\iota)\big) \ \ \mathrm{for}\ \ \tau< 0 \nonumber. 
\end{align} As $\phi$ is a convex monotone increasing function, for any $\eta\in [0, 1]$, $\tau\in \R$ and $\widetilde{\tau}\in \R$, one has, \begin{align*}
  \big(\phi\circ h_{Q{}_{\bigcdot}\, \iota^{\mathrm{T}}}\big)(\eta \tau+(1-\eta)\widetilde{\tau})
  &= \phi \Big( h_{Q}\big( (\eta \tau+(1-\eta)\widetilde{\tau} ){}_{\bigcdot}\, \iota \big)\Big)\\ 
  & =\phi \Big( h_{Q}\big(  \eta \tau{}_{\bigcdot}\,  \iota+(1-\eta)\widetilde{\tau} {}_{\bigcdot}\, \iota \big)\Big)\\
  &\leq \phi \big(\eta  h_{Q}(\tau{}_{\bigcdot}\,  \iota)+(1-\eta) h_{Q}(\widetilde{\tau}{}_{\bigcdot}\,  \iota )\big)\\
  &\leq \eta \phi \big( h_{Q}(\tau{}_{\bigcdot}\,  \iota)\big)+(1-\eta) \phi \big( h_{Q}(\widetilde{\tau}{}_{\bigcdot}\,  \iota )\big)\\
  &=\eta \big(\phi\circ h_{Q{}_{\bigcdot}\, \iota^{\mathrm{T}}}\big)(\tau)+(1-\eta) \big(\phi\circ h_{Q{}_{\bigcdot}\, \iota^{\mathrm{T}}}\big)( \widetilde{\tau}).
\end{align*} That is, $ \phi\circ h_{Q{}_{\bigcdot}\, \iota^{\mathrm{T}}} $ defines a convex function on $\R.$ As  $\phi(0)=0$, it is easy to see that  $\phi\circ h_{Q{}_{\bigcdot}\, \iota^{\mathrm{T}}}(0)=0$. 
Following from the fact that $\phi$ is strictly increasing on $[0, \infty)$ and   \eqref{positive-def-p-q}, one sees that $\phi\circ h_{Q{}_{\bigcdot}\, \iota^{\mathrm{T}}}$ is strictly increasing on $[0,\infty)$. 

For convenience, let $\psi=\phi\circ h_{Q{}_{\bigcdot}\, \iota^{\mathrm{T}}}$, and for $\widetilde{K}\in \mathcal{K}^{n,1}_{(o)}$, let $\Pi_{\psi}\widetilde{K}$ be the Orlicz projection body defined in \eqref{def-orlicz-petty-body}. That is, the support function of $\Pi_{\psi}\widetilde{K}$ is given by: for $x\in M_{n,1}(\mathbb{R})$,   \begin{align}\label{def-orlicz-petty-body-psi}
h_{\Pi_{\psi} \widetilde{K}}(x)&=\inf\left\{\frac{1}{t}>0:\int_{S^{n-1}}\psi\bigg(t\frac{x\bigcdot u}{ h_{\widetilde{K}}(u)}\bigg)h_{\widetilde{K}}(u)dS_{\widetilde{K}}(u)\leq n V_n({\widetilde{K}})\right\} \nonumber \\ &=\inf\left\{\frac{1}{t}>0:\int_{S^{n-1}} \phi\circ h_{Q{}_{\bigcdot}\, \iota^{\mathrm{T}}}\bigg(t\frac{x\bigcdot u}{ h_{\widetilde{K}}(u)}\bigg)h_{\widetilde{K}}(u)dS_{\widetilde{K}}(u)\leq n V_n({\widetilde{K}})\right\}. 
\end{align} 
Together with \eqref{h KB}, \eqref{specical-case-1} and $u\bigcdot x=u^{\mathrm{T}}{}_{\bigcdot}\, x$ for $u,x\in M_{n,1}(\mathbb{R})$, one gets, for $x\in M_{n,1}(\mathbb{R})$,
\begin{align*}
h_{\Pi_{\psi} \widetilde{K}}(x)
&=\inf\left\{\frac{1}{t}>0: \int_{S^{n-1}} \phi\circ h_{Q{}_{\bigcdot}\, \iota^{\mathrm{T}}} \Big(t \frac{u \bigcdot x}{h_{\widetilde{K}}({u})}\Big)h_{\widetilde{K}}({u})dS_{\widetilde{K}}({u})\leq nV_n({\widetilde{K}})\right\}\\
&=\inf\left\{\frac{1}{t}>0: \int_{S^{n-1}} \phi\circ h_Q \Big(t \frac{u^{\mathrm{T}}{}_{\bigcdot}\,  x{}_{\bigcdot}\, \iota}{h_{\widetilde{K}}({u})}\Big)h_{\widetilde{K}}({u})dS_{\widetilde{K}}({u})\leq nV_n({\widetilde{K}})\right\}\\
&=h_{(\Pi_{\phi, Q}^{m}{\widetilde{K}})}(x{}_{\bigcdot}\, \iota)=h_{(\Pi_{\phi, Q}^{m}{\widetilde{K}}){}_{\bigcdot}\, \iota^{\mathrm{T}}}(x). 
\end{align*} This implies that  
\begin{align}\label{formula-6-1}
\Pi_{\psi}{\widetilde{K}}=(\Pi_{\phi, Q}^m{\widetilde{K}}){}_{\bigcdot}\, \iota^{\mathrm{T}}.
\end{align} Again, for convenience, let $\Pi_{\psi}^*{\widetilde{K}}=(\Pi_{\psi}{\widetilde{K}})^*.$ 
Define $J_{\iota}: M_{n,1}(\mathbb{R})\rightarrow M_{n,m}(\mathbb{R})$ to be  $J_{\iota}(z)=z{}_{\bigcdot}\, \iota$. The preimage of $J_{\iota}$ is denoted by $J_{\iota}^{-1}$. We now show that
\begin{align}\label{equal-6.1-4}
\Pi_{\psi}^{*}S_v{\widetilde{K}}=J_{\iota}^{-1}(\Pi_{\phi, Q}^{m, *}S_v{\widetilde{K}}).
\end{align}
Let $y\in\Pi_{\psi}^{*}S_v{\widetilde{K}},$ i.e., 
$h_{\Pi_{\psi}S_v{\widetilde{K}}}(y)\leq
1.$ Together with \eqref{h KB} and \eqref{formula-6-1},  one has
\begin{align*}
h_{\Pi_{\psi}S_v{\widetilde{K}}}(y)=
h_{(\Pi_{\phi, Q}^{m}S_v{\widetilde{K}}){}_{\bigcdot}\, \iota^{\mathrm{T}}}(y)=h_{\Pi_{\phi, Q}^{m}S_v{\widetilde{K}}}(y{}_{\bigcdot}\, \iota)\leq1,
\end{align*} which further gives $y{}_{\bigcdot}\, \iota\in \Pi_{\phi, Q}^{m,*}S_v{\widetilde{K}}$, and hence $y\in J_{\iota}^{-1}(\Pi_{\phi, Q}^{m, *}S_v{\widetilde{K}})$. That is, we have proved that \begin{align*}
\Pi_{\psi}^{*}S_v{\widetilde{K}}\subset J_{\iota}^{-1}(\Pi_{\phi, Q}^{m, *}S_v{\widetilde{K}}).
\end{align*} The above argument can be reversed to obtain $J_{\iota}^{-1}(\Pi_{\phi, Q}^{m, *}S_v{\widetilde{K}})\subset \Pi_{\psi}^{*}S_v{\widetilde{K}},$  which yields \eqref{equal-6.1-4}. Together with \eqref{equal-6.1-1}, we obtain
\begin{align*}
\Pi_{\psi}^{*}S_v{\widetilde{K}}=J_{\iota}^{-1}(\Pi_{\phi, Q}^{m, *}S_v{\widetilde{K}})=J_{\iota}^{-1}(\bar{S}_v\Pi_{\phi, Q}^{m, *}{\widetilde{K}}).
\end{align*}

Next, we show that, for all $v\in \sphere,$ 
\begin{align}\label{equal-6.1-2}
S_v\Pi_{\psi}^{*}\widetilde{K}=\Pi_{\psi}^{*}S_v\widetilde{K}. 
\end{align} It has been proved in  \cite[Corollary 3.1]{orlicz projection} that $
 S_v\Pi_{\psi}^{*}\widetilde{K} \subset\Pi_{\psi}^{*}S_v\widetilde{K}.$ To get \eqref{equal-6.1-2}, it is enough to prove  
\begin{align}\label{equal-6.1-2.1}
\Pi_{\psi}^{*}S_v\widetilde{K}\subset S_v\Pi_{\psi}^{*}\widetilde{K} \ \ \mathrm{or\ \ equivalently}\ \ J_{\iota}^{-1}(\bar{S}_v\Pi_{\phi, Q}^{m, *}{\widetilde{K}})\subset
S_v\Pi_{\psi}^{*}{\widetilde{K}}.
\end{align} To this end, let $z+\tau v\in J_{\iota}^{-1}(\bar{S}_v\Pi_{\phi, Q}^{m, *}{\widetilde{K}})$ for $z\in v^{\perp}$ and $\tau\in \mathbb{R}$. By \eqref{m-th-stein-1},  there exist 
\begin{align*}
\pmb{x}-v{}_{\bigcdot}\, s\in 
\Pi_{\phi, Q}^{m, *}{\widetilde{K}}\ \ \mathrm{and}\ \ \pmb{x}+v{}_{\bigcdot}\, t\in 
\Pi_{\phi, Q}^{m, *}{\widetilde{K}}
\end{align*}
with $\pmb{x}\in V^m(v)$, $s\in M_{1,m}(\mathbb{R})$ and $t\in M_{1,m}(\mathbb{R})$, such that
\begin{align}\label{6.2-2}
(z+\tau v){}_{\bigcdot}\, \iota=\pmb{x}+v{}_{\bigcdot}\, \frac{s+t}{2}.
\end{align}
Meanwhile, the following inclusion can be verified: 
\begin{align}\label{6.2-3}
(\Pi_{\phi, Q}^{m,*}{\widetilde{K}}){}_{\bigcdot}\, \kappa^{\mathrm{T}}
\subset
\Pi_{\psi}^{*}{\widetilde{K}}.
\end{align}
On the one hand, it can be checked that, by using the approach similar to the proof of \eqref{formula-6-1},
$$\Pi_{\phi, Q{}_{\bigcdot}\, \iota^{\mathrm{T}}{}_{\bigcdot}\, \kappa}^{m}{\widetilde{K}}=(\Pi_{\phi, Q}^{m}{\widetilde{K}}){}_{\bigcdot}\, \iota^{\mathrm{T}}{}_{\bigcdot}\, \kappa.$$
Combined with \eqref{formula-6-1}, one gets, for $P=\iota^{\mathrm{T}}{}_{\bigcdot}\, \kappa$, $$\Pi_{\phi, Q{}_{\bigcdot}\, P}^{m}{\widetilde{K}}=\Pi_{\phi, Q{}_{\bigcdot}\, \iota^{\mathrm{T}}{}_{\bigcdot}\, \kappa}^{m}{\widetilde{K}}=(\Pi_{\phi, Q}^{m}{\widetilde{K}}){}_{\bigcdot}\, \iota^{\mathrm{T}}{}_{\bigcdot}\, \kappa=(\Pi_{\psi}{\widetilde{K}}){}_{\bigcdot}\, \kappa.$$ 
On the other hand,   $\Pi_{\phi, Q{}_{\bigcdot}\, P}^{m}{\widetilde{K}}\subset \Pi_{\phi, Q}^{m}{\widetilde{K}}$ due to  $Q{}_{\bigcdot}\, P\subset Q$ and the fact that $\phi$ is strictly increasing on $[0,\infty)$. 
Thus,  
\begin{align*}
(\Pi_{\psi}{\widetilde{K}}){}_{\bigcdot}\, \kappa\subset \Pi_{\phi, Q}^{m}{\widetilde{K}}, \ \mathrm{and\ \ equivalently\ }\ \  \big(\Pi_{\phi, Q}^{m}{\widetilde{K}}\big)^*=\Pi_{\phi, Q}^{m,*}{\widetilde{K}}\subset \big((\Pi_{\psi}{\widetilde{K}}){}_{\bigcdot}\, \kappa\big)^*.\end{align*} 
That is, for any $\pmb{z}\in \big(\Pi_{\phi, Q}^{m}{\widetilde{K}}\big)^*, $ one has $\pmb{z}\in \big((\Pi_{\psi}{\widetilde{K}}){}_{\bigcdot}\, \kappa\big)^*.$ It follows from  \eqref{L-star} and $
\pmb{x}\bigcdot \pmb{y}=\mathrm{tr}(\pmb{x}^{\mathrm{T}}{}_{\bigcdot}\, \pmb{y})$ for $\pmb{x},\pmb{y}\!\in\!M_{n,m}(\mathbb{R})$ that $ \mathrm{tr}\big({\pmb{z}^{\mathrm{T}}{}_{\bigcdot}\, (x{}_{\bigcdot}\, \kappa)}\big)\leq 1$ for all $x\in\Pi_{\psi}{\widetilde{K}}.$  Note that $
\mathrm{tr}({\pmb{z}^{\mathrm{T}}{}_{\bigcdot}\, (x{}_{\bigcdot}\, \kappa)})=\mathrm{tr}(\kappa{}_{\bigcdot}\, {\pmb{z}^{\mathrm{T}}{}_{\bigcdot}\, x}).$ Thus, $\mathrm{tr}(\kappa{}_{\bigcdot}\, {\pmb{z}^{\mathrm{T}}{}_{\bigcdot}\, x})  \leq 1$ for all $x\in\Pi_{\psi}{\widetilde{K}}$.  It follows again from \eqref{L-star} that $(\kappa{}_{\bigcdot}\, {\pmb{z}^{\mathrm{T}}})^{\mathrm{T}}=\pmb{z}{}_{\bigcdot}\, \kappa^{\mathrm{T}}\in \Pi_{\psi}^*{\widetilde{K}}$, which implies \eqref{6.2-3}.  

As $\pmb{x}-v{}_{\bigcdot}\, s\in 
\Pi_{\phi, Q}^{m, *}{\widetilde{K}}$ and $\pmb{x}+v{}_{\bigcdot}\, t \in \Pi_{\phi, Q}^{m, *}{\widetilde{K}}$, it can be checked from \eqref{6.2-3} that  
\begin{align}\label{boundary-pi-psi-k}
 (\pmb{x}-v{}_{\bigcdot}\, s){}_{\bigcdot}\, \kappa^{\mathrm{T}}=\pmb{x}{}_{\bigcdot}\, \kappa^{\mathrm{T}}-v{}_{\bigcdot}\, s{}_{\bigcdot}\, \kappa^{\mathrm{T}}\in \Pi_{\psi}^{*}{\widetilde{K}}\ \ \mathrm{and}\ \ (\pmb{x}+v{}_{\bigcdot}\, t){}_{\bigcdot}\, \kappa^{\mathrm{T}}=\pmb{x}{}_{\bigcdot}\, \kappa^{\mathrm{T}}+v{}_{\bigcdot}\, t{}_{\bigcdot}\, \kappa^{\mathrm{T}}\in \Pi_{\psi}^{*}{\widetilde{K}}.
\end{align}
Here, $s{}_{\bigcdot}\, \kappa^{\mathrm{T}}$ and $t{}_{\bigcdot}\, \kappa^{\mathrm{T}}$ are constants. Observe that $z\in v^{\perp}$ and $\pmb{x}\in V^m(v)$. By left-multiplying both sides of \eqref{6.2-2} by $v^{\mathrm{T}}$, one can get 
\begin{align*}
\tau{}_{\bigcdot}\, \iota=\frac{s+t}{2} \ \mathrm{and}\ \ z{}_{\bigcdot}\, \iota=\pmb{x}.
\end{align*}
As $\iota{}_{\bigcdot}\, \kappa^{\mathrm{T}}=\mathrm{tr}(P)=1$,  one gets $$\tau=\frac{s{}_{\bigcdot}\, \kappa^{\mathrm{T}}+t{}_{\bigcdot}\, \kappa^{\mathrm{T}}}{2}\ \ \mathrm{and}\ \ z=\pmb{x}{}_{\bigcdot}\, \kappa^{\mathrm{T}}.$$  Therefore,  \eqref{equal-6.1-2.1} holds because $s{}_{\bigcdot}\, \kappa^{\mathrm{T}}\in \R$, $t{}_{\bigcdot}\, \kappa^{\mathrm{T}}\in \R$ and 
\begin{align*}
z+\tau v=\pmb{x}{}_{\bigcdot}\, \kappa^{\mathrm{T}}+\frac{(s{}_{\bigcdot}\, \kappa^{\mathrm{T}})v+(t{}_{\bigcdot}\, \kappa^{\mathrm{T}})v}{2}=\pmb{x}{}_{\bigcdot}\, \kappa^{\mathrm{T}}+ v{}_{\bigcdot}\,  \Big( \frac{s{}_{\bigcdot}\, \kappa^{\mathrm{T}}+ t{}_{\bigcdot}\, \kappa^{\mathrm{T}}}{2}\Big) \in S_v\Pi_{\psi}^{*}{\widetilde{K}},
\end{align*} where the last one follows from \eqref{stein-sym-1} and \eqref{boundary-pi-psi-k}. In particular, \eqref{equal-6.1-2} holds as desired.   

Let $\widetilde{K}\in \mathcal{K}_{(o)}^{n,1}$ be such that  \begin{align*}
\widetilde{K}=\Big\{z+\tau v\in M_{n,1}(\mathbb{R}): z\in P_{v^{\perp}}\widetilde{K}\  \mathrm{and}\ -\widetilde{f}_1(z)\leq \tau\leq \widetilde{f}_2(z)\Big\}.
\end{align*} 
As the origin $o$ is in the interior of $\widetilde{K}$, one sees that $\langle \widetilde{f}_1\rangle>0$ and $\langle \widetilde{f}_2\rangle>0$. Let   $\xi\in v^{\perp}$ and $s,t\in\mathbb{R}$ with $-s\ne t$ be such that \begin{align}\label{xi-vs-xi-tv}
\xi-sv\in\partial{\Pi_{\psi}^{ *}\widetilde{K}}\ \ \mathrm{and}\ \  \xi+tv\in\partial{\Pi_{\psi}^{*}\widetilde{K}}. 
\end{align}
 Then, $h_{\Pi_{\psi}\widetilde{K}}(\xi-sv)=1$ and $h_{\Pi_{\psi}\widetilde{K}}(\xi+tv)=1$. Employing \eqref{1.2} and \eqref{1.1} (by letting, for instance, $m=1$, $\Phi=\phi\circ h_{Q{}_{\bigcdot}\, \iota^{\mathrm{T}}}$, and $\pmb{\xi}=\xi$), one gets  
\begin{align} 
 n V_n(\widetilde{K})
&=\int_{P_{v^{\perp}}\widetilde{K}} \bigg[\phi\circ h_{Q{}_{\bigcdot}\, \iota^{\mathrm{T}}}\bigg(\frac{\!-\nabla \widetilde{f}_1(z){\bigcdot}{\xi}\!+\!s}{\langle \widetilde{f}_1\rangle(z)}\bigg)\langle \widetilde{f}_1\rangle(z) \!+\!\phi\circ h_{Q{}_{\bigcdot}\, \iota^{\mathrm{T}}}\bigg(\frac{\!-\nabla \widetilde{f}_2(z){\bigcdot}{\xi}\!-\!s}{\langle \widetilde{f}_2\rangle(z)}\bigg)\langle \widetilde{f}_2\rangle(z)\bigg] dz \label{6.1-3-1}  \\  
 &=\int_{P_{v^{\perp}}\widetilde{K}} \bigg[\phi\circ h_{Q{}_{\bigcdot}\, \iota^{\mathrm{T}}}\bigg(\frac{\!-\nabla \widetilde{f}_1(z){\bigcdot}{\xi}\!-\!t}{\langle \widetilde{f}_1\rangle(z)}\bigg)\langle \widetilde{f}_1\rangle(z)\!+\!\phi\circ h_{Q{}_{\bigcdot}\, \iota^{\mathrm{T}}}\bigg(\frac{\!-\nabla \widetilde{f}_2(z){\bigcdot}{\xi}\!+\!t}{\langle \widetilde{f}_2\rangle(z)}\bigg)\langle \widetilde{f}_2\rangle(z)\bigg]dz.\label{6.1-3-2} 
\end{align} 

It follows from \eqref{xi-vs-xi-tv} that $\xi+rv\in\partial{S_v\Pi_{\psi}^{ *}\widetilde{K}}$ with $r=\frac{t+s}{2}$. By \eqref{equal-6.1-2},  one has,  $\xi+rv\in \partial \Pi_{\psi}^{*}S_v\widetilde{K}$ and hence $h_{\Pi_{\psi}S_v\widetilde{K}}(\xi+rv)=1.$ Due to \eqref{1.2}, \eqref{1.1} and \eqref{def-orlicz-petty-body-psi}, one gets 
\begin{align}\label{6-4-1}
\nonumber
\!\!\!n V_n(S_v\widetilde{K})=&\int_{S^{n-1}}\!\!\!\! \psi\bigg(\!\frac{u \bigcdot  ({\xi}+rv)}{h_{S_v{\widetilde{K}}}(u)}\!\bigg)h_{S_v{\widetilde{K}}}(u)dS_{S_v\widetilde{K}}(u) \\
=&\!\int_{P_{v^{\perp}}{\widetilde{K}}}\!\!\!\! \phi\circ h_{Q{}_{\bigcdot}\, \iota^{\mathrm{T}}}\bigg(\!\frac{-\nabla(\widetilde{f}_1\!+\!\widetilde{f}_2)(z) \bigcdot {\xi}-(s+t)}{\langle \widetilde{f}_1+\widetilde{f}_2\rangle(z)}\!\bigg)\frac{\langle \widetilde{f}_1\!+\!\widetilde{f}_2\rangle(z)}{2} dz \nonumber \\
\nonumber
&\ \  +\!\int_{P_{v^{\perp}}{\widetilde{K}}}\!\!\!\! \phi\circ h_{Q{}_{\bigcdot}\, \iota^{\mathrm{T}}}\bigg(\frac{-\nabla(\widetilde{f}_1\!+\!\widetilde{f}_2)(z)\bigcdot {\xi}+(s+t)}{\langle \widetilde{f}_1\!+\! \widetilde{f}_2\rangle(z)}\bigg)\frac{\langle \widetilde{f}_1\!+\!\widetilde{f}_2\rangle(z)}{2}  dz\\
\leq &\!\int_{P_{v^{\perp}}{\widetilde{K}}}\!\!\!\! \phi \bigg(\!\frac{h_{Q{}_{\bigcdot}\, \iota^{\mathrm{T}}}\big(-\!\nabla \widetilde{f}_1(z) \bigcdot {\xi}\!-\!t\big)\!+\!h_{Q{}_{\bigcdot}\, \iota^{\mathrm{T}}}\big(-\nabla \widetilde{f}_2(z) \bigcdot {\xi}\!-\!s\big)}{\langle \widetilde{f}_1+\widetilde{f}_2\rangle(z)}\!\bigg)\frac{\langle \widetilde{f}_1\!+\!\widetilde{f}_2\rangle(z)}{2} dz \nonumber \\
 &\ \  +\!\int_{P_{v^{\perp}}{\widetilde{K}}}\!\!\!\! \phi \bigg(\!\frac{h_{Q{}_{\bigcdot}\, \iota^{\mathrm{T}}}\big(-\!\nabla \widetilde{f}_1(z) \bigcdot {\xi}\!+\!s\big)\!+\!h_{Q{}_{\bigcdot}\, \iota^{\mathrm{T}}}\big(-\nabla \widetilde{f}_2(z)\bigcdot{\xi}\!+\!t\big)}{\langle \widetilde{f}_1+\widetilde{f}_2\rangle(z)}\!\bigg)\frac{\langle \widetilde{f}_1\!+\!\widetilde{f}_2\rangle(z)}{2} dz, 
\end{align}
where the last inequality follows from the convexity of $h_{Q{}_{\bigcdot}\, \iota^{\mathrm{T}}}$ and the fact that $\phi$ is strictly increasing on $[0,\infty)$. This, together with \eqref{6.1-3-1}, \eqref{6.1-3-2} and the convexity of $\phi$, yields that
\begin{align}\label{6-4-2}
\nonumber 
\!\!n V_n(S_v\widetilde{K})
\leq&\ \frac{1}{2}\int_{{P_{v^{\perp}}\widetilde{K}} }\phi\circ h_{Q{}_{\bigcdot}\, \iota^{\mathrm{T}}}\bigg(\frac{-\nabla \widetilde{f}_1(z)\bigcdot{\xi}-t}{\langle \widetilde{f}_1\rangle(z)}\bigg)\langle \widetilde{f}_1\rangle(z) dz\\
\nonumber
&\!+\!\frac{1}{2}\int_{{P_{v^{\perp}}\widetilde{K}} }\phi\circ h_{Q{}_{\bigcdot}\, \iota^{\mathrm{T}}}\bigg(\frac{-\nabla \widetilde{f}_2(z)\bigcdot{\xi}-s}{\langle \widetilde{f}_2\rangle(z)}\bigg)\langle \widetilde{f}_2\rangle(z) dz\\
\nonumber
&\!+\!\frac{1}{2}\int_{{P_{v^{\perp}}\widetilde{K}}}\phi\circ h_{Q{}_{\bigcdot}\, \iota^{\mathrm{T}}}\bigg(\frac{-\nabla \widetilde{f}_1(z)\bigcdot{\xi}+s}{\langle \widetilde{f}_1\rangle(z)}\bigg)\langle \widetilde{f}_1\rangle(z)dz\\
&\!+\!\frac{1}{2}\int_{{P_{v^{\perp}}\widetilde{K}}}\phi\circ h_{Q{}_{\bigcdot}\, \iota^{\mathrm{T}}}\bigg(\frac{-\nabla \widetilde{f}_2(z)\bigcdot{\xi}+t}{\langle \widetilde{f}_2\rangle(z)}\bigg)\langle \widetilde{f}_2\rangle(z)dz=n V_n(\widetilde{K}).
\end{align}

As $ V_n(S_v\widetilde{K})= V_n(\widetilde{K})$, equalities hold in both \eqref{6-4-1} and \eqref{6-4-2}. As $\phi$ is strictly convex, one gets,  for almost all $z\in P_{v^{\perp}}{\widetilde{K}}$, \begin{align}
&h_{Q{}_{\bigcdot}\, \iota^{\mathrm{T}}}\bigg(\frac{-\nabla \widetilde{f}_1(z)\bigcdot{\xi}-t}{\langle \widetilde{f}_1\rangle(z)}\bigg)=h_{Q{}_{\bigcdot}\, \iota^{\mathrm{T}}}\bigg(\frac{-\nabla \widetilde{f}_2(z)\bigcdot{\xi}-s}{\langle \widetilde{f}_2\rangle(z)}\bigg), \nonumber\\
&h_{Q{}_{\bigcdot}\, \iota^{\mathrm{T}}}\bigg(\frac{-\nabla \widetilde{f}_1(z)\bigcdot{\xi}+s}{\langle \widetilde{f}_1\rangle(z)}\bigg)= h_{Q{}_{\bigcdot}\, \iota^{\mathrm{T}}}\bigg(\frac{-\nabla \widetilde{f}_2(z)\bigcdot{\xi}+t}{\langle \widetilde{f}_2\rangle(z)}\bigg).  \label{6.18-2}
\end{align}
Since $o\in\mathrm{int}\Pi_{\psi}^*\widetilde{K}$, we can let $\xi=o\in \mathrm{int}P_{v^\perp}K$. Then, there exist $s_0=s_0(v)>0$ and $t_0=t_0(v)>0$ such that $-s_0v\in\partial \Pi_{\psi}^*\widetilde{K}$ and $t_0v\in\partial \Pi_{\psi}^*\widetilde{K}$. By \eqref{6.18-2}, one gets
\begin{align*}
h_{Q{}_{\bigcdot}\, \iota^{\mathrm{T}}}\bigg(\frac{s_0}{\langle \widetilde{f}_1\rangle(z)}\bigg)= h_{Q{}_{\bigcdot}\, \iota^{\mathrm{T}}}\bigg(\frac{t_0}{\langle \widetilde{f}_2\rangle(z)}\bigg). 
\end{align*}
It follows from \eqref{hom-1-1}, \eqref{positive-def-p-q},   $s_0>0$ and $t_0>0$ that
\begin{align*}
\frac{s_0h_Q(\iota)}{\langle \widetilde{f}_1\rangle(z)}=\frac{t_0h_Q(\iota)}{\langle \widetilde{f}_2\rangle(z)}.
\end{align*}
Since $h_Q(\iota)>0$, $\langle \widetilde{f}_1\rangle>0$  and $\langle \widetilde{f}_2\rangle>0$, the above equality implies that
 \begin{align}\label{langle-f2-cf1}
 \langle \widetilde{f}_2\rangle(z)=\frac{t_0}{s_0}\langle \widetilde{f}_1\rangle(z), \ \ \mathrm{i.e.,}\ \ \Big(\widetilde{f}_2-\frac{t_0}{s_0}\widetilde{f}_1\Big)(z)=\nabla\Big(\widetilde{f}_2-\frac{t_0}{s_0} \widetilde{f}_1\Big)(z)\bigcdot z.
 \end{align}

We now show that $t_0(v)=s_0(v)$ for any $v\in \sphere$.  
As $P_{v^{\perp}}\widetilde{K}\subset M_{n,1}(\mathbb{R})$ is an $(n-1)$-dimensional convex body, its relative interior, $\mathrm{relint}P_{v^{\perp}}\widetilde{K}$, can be obtained by the union of an increasing sequence of $(n-1)$-dimensional convex bodies $\left\{E_k\right\}_{k=1}^{\infty}$, namely, 
\begin{align}\label{E1-E2-Einfty}
\mathrm{relint}P_{v^{\perp}}\widetilde{K}=\bigcup_{k=1}^{\infty} E_k \ \ \mathrm{where}\ \  E_1 \subset E_2 \subset \cdots \subset\mathrm{relint}P_{v^{\perp}}\widetilde{K}.
\end{align}
For each $j=1,2,$  $-\widetilde{f}_j$ is convex, and by \cite[Theorem 24.7]{Rockafellar}, the subdifferential of $-\widetilde{f}_j$ on each $E_k$ is bounded for  $k\in\mathbb{N}$. Note that   $-\widetilde{f}_j$ is differentiable $\mathcal{H}^{n-1}$-almost everywhere on $E_k$  for  $k\in\mathbb{N}$. 

Let $k\in \mathbb{N}$ be fixed. At each point $z\in E_k$ where $-\widetilde{f}_j$ is differentiable,   $\nabla (-\widetilde{f}_j)(z)=-\nabla (\widetilde{f}_j)(z)$ is the unique subgradient of $-\widetilde{f}_j$ at $z$ (see \cite[Theorem 25.1]{Rockafellar}). Together with the boundedness of subdifferential (see \cite[Theorem 24.7]{Rockafellar}), it follows that a constant $c_k>0$ can be found such that 
\begin{align} \label{max-nabla-1}
\max_{j=1, 2} \big|\nabla \widetilde{f}_j(z)\big| = \max_{j=1, 2} \big|\nabla (-\widetilde{f}_j)(z)\big| <c_k \ \ \mathrm{for\ almost\ all}\ z\in E_k.   
\end{align} 
As $o\in\mathrm{int}\Pi_{\psi}^*\widetilde{K}$ and $o\in \mathrm{int} \widetilde{K}$, there exists a constant $r_0>0$ such that $r_0B_2^n\subset \Pi_{\psi}^*\widetilde{K}$ and $r_0B_2^n\subset \widetilde{K}$. Let $\xi\in P_{v^{\perp}}\widetilde{K} $ be such that $|\xi|=\min\{\frac{r_0}{2 c_k}, \frac{r_0}{2}\}$. It can be checked that there exist $s(\xi)>0$ and $t(\xi)>0$ such that 
 $$\xi-s(\xi)v\in \partial \Pi_{\psi}^*\widetilde{K}\ \mathrm{and}\ \ \xi+t(\xi)v\in \partial \Pi_{\psi}^*\widetilde{K}.$$ Clearly, $|\xi|^2+s(\xi)^2\geq r_0^2$ as $r_0B_2^n\subset \Pi_{\psi}^*\widetilde{K}$. By \eqref{max-nabla-1}, $|\xi|=\min\{\frac{r_0}{2 c_k}, \frac{r_0}{2}\}$, and the Cauchy-Schwarz inequality, one gets, for almost all $z\in E_k$,  $$s(\xi)-\nabla \widetilde{f}_1(z)\bigcdot{\xi}\geq \sqrt{r_0^2-|\xi|^2}-c_k|\xi|\geq \sqrt{r_0^2-\frac{r_0^2}{4}}-\frac{c_k  r_0}{2 {c_k}}=\frac{(\sqrt{3}-1)r_0}{2}>0.$$ Similarly,  $-\nabla \widetilde{f}_2(z)\bigcdot{\xi}+t(\xi)>0$ for almost all $z\in E_k$. Together with \eqref{hom-1-1}, \eqref{positive-def-p-q}, \eqref{6.18-2} and the fact that $h_Q(\iota)>0$, one can get 
\begin{align*}
\frac{-\nabla \widetilde{f}_1(z)\bigcdot{\xi}+s(\xi)}{\langle \widetilde{f}_1\rangle(z)} =\frac{ -\nabla \widetilde{f}_2(z)\bigcdot{\xi}+t(\xi)}{\langle \widetilde{f}_2\rangle(z)}. 
\end{align*} By \eqref{langle-f2-cf1}, the following holds: 
\begin{align*} 
\frac{t_0}{s_0}\Big(\!\!-\!\!\nabla \widetilde{f}_1(z)\bigcdot{\xi}+s(\xi)\Big)= -\nabla \widetilde{f}_2(z)\bigcdot{\xi}+t(\xi).  
\end{align*} After rearrangement, one has 
\begin{align}\label{1-frac-t-s-xi}
 \Big(\nabla\widetilde{f}_2-\frac{t_0}{s_0}\nabla\widetilde{f}_1\Big)(z)\bigcdot{\xi}= \nabla\Big(\widetilde{f}_2-\frac{t_0}{s_0}\widetilde{f}_1\Big)(z)\bigcdot{\xi}=t(\xi)-\frac{t_0}{s_0}s(\xi).
\end{align}

Note that $z$ and $\xi$ are independent of each other. Thus, for almost all $z\in E_k$, \eqref{1-frac-t-s-xi} asserts that $\nabla\big(\widetilde{f}_2-\frac{t_0}{s_0}\widetilde{f}_1\big)(z)$ lies in a hyperplane with normal direction $\frac{\xi}{|\xi|}$. As $\frac{\xi}{|\xi|}\in S^{n-1}\cap v^{\perp}$ is arbitrary, for each fixed $k$, one can indeed get $\nabla\big(\widetilde{f}_2-\frac{t_0}{s_0}\widetilde{f}_1\big)(z)=\widetilde{w}_k$ for almost all $z\in E_k$, where $\widetilde{w}_k\in M_{n,1}(\mathbb{R})$ is independent of $z$. It follows from \eqref{langle-f2-cf1} that, for each $k\in\mathbb{N}$,
\begin{align}\label{widetilde-wk}
\Big(\widetilde{f}_2-\frac{t_0}{s_0}\widetilde{f}_1\Big)(z)=\widetilde{w}_k \bigcdot z \ \ \mathrm{for\ almost\ all}\ \ z\in E_k.
\end{align} 
Note that $f_j$, $j=1,2,$ are continuous on each $E_k\subset \mathrm{relint}P_{v^{\perp}}\widetilde{K}$. Thus, the above equation holds for all $z\in E_k$. Indeed,  as $E_1 \subset E_2 \subset \cdots \subset\mathrm{relint}P_{v^{\perp}}\widetilde{K}$ (see \eqref{E1-E2-Einfty}), $\widetilde{w}_k$ turns out to be the same for all $k\in\mathbb{N}$. For convenience, such a constant is denoted by $\widetilde{w}$, that is,  $\widetilde{w}=w_k$ for $k\in\mathbb{N}$. It follows from  \eqref{widetilde-wk} that, for $z\in\mathrm{relint}P_{v^{\perp}}\widetilde{K}$,  
\begin{align}\label{f-2-t0s0-f1=wz}
\Big(\widetilde{f}_2-\frac{t_0}{s_0}\widetilde{f}_1\Big)(z)=\widetilde{w} \bigcdot z.
\end{align} Again, \eqref{f-2-t0s0-f1=wz} holds for all $z\in P_{v^{\perp}}\widetilde{K}$.   
Thus, $\widetilde{f}_2-\frac{t_0}{s_0} \widetilde{f}_1$ is a linear function on $P_{v^{\perp}}\widetilde{K}$. Let $$r=r(v)=\frac{t_0}{s_0}:\Big(1+\frac{t_0}{s_0}\Big).$$ By the linearity of $\widetilde{f}_2-\frac{t_0}{s_0} \widetilde{f}_1$, it follows that the points, which  divide the (directed) chords of $K$ parallel to $v$ in the proportion $r:1-r$, lie in a hyperplane. As $v\in S^{n-1}$ is arbitrary, Theorem \ref{Gruber-theorem-3} implies that $K$ is an ellipsoid. In particular,  $r=r(v)=\frac{1}{2}$ and hence $t_0=s_0$ (i.e., $t_0(v)=s_0(v)$) holds for all $v\in \sphere$. Together with \eqref{f-2-t0s0-f1=wz}, one gets
\begin{align*}
\widetilde{f}_1(z)-\widetilde{f}_2 (z)={\widetilde{w}}\bigcdot z.
\end{align*}
Thus, the midpoints of the chords of $\widetilde{K}$ parallel to $v$ lie in the subspace:
$$\Big\{z+\frac{{\widetilde{w}}\bigcdot z}{2} v: z\in P_{v^{\perp}}\widetilde{K}\Big\}.$$ Therefore, $\widetilde{K}$ is an origin-symmetric ellipsoid (see e.g., \cite[Theorem 10.2.1]{schneider}) as desired. 
\end{proof}

\vskip 2mm \noindent  {\bf Acknowledgement.}  We are in great debt to the referee for many wonderful and valuable comments and suggestions, which greatly improve the presentation of the paper. The
research of DY was supported by a NSERC grant, Canada.
 The research of ZZ was supported by NSFC (No. 12301071), Natural Science Foundation of
Chongqing, China CSTC (No. CSTB2024NSCQ-MSX1085) and the Science and Technology Research Program of Chongqing Municipal
Education Commission (No. KJQN202201339).

\vspace{16pt}

\noindent Xia Zhou, Department of Mathematics and Statistics, Memorial University of Newfoundland, St. John’s, Newfoundland, A1C 5S7, Canada\\
\textit{Email address}: xiaz@mun.ca
\vspace{8pt}

\noindent Deping Ye, Department of Mathematics and Statistics, Memorial University of Newfoundland, St. John’s, Newfoundland, A1C 5S7, Canada\\
\textit{Email address}: deping.ye@mun.ca
\vspace{8pt}

\noindent Zengle Zhang, Key Laboratory of Group and Graph Theories and Applications, Chongqing University of Arts and Sciences, Yongchuan, Chongqing, 402160, China\\
\textit{Email address}: 
zhangzengle128@163.com

\end{document}